\theoremstyle{plain}
\newtheorem{theorem}{Theorem}[section]
\newtheorem{lemma}[theorem]{Lemma}
\newtheorem{proposition}[theorem]{Proposition}
\newtheorem{corollary}[theorem]{Corollary}
\theoremstyle{definition}
\newtheorem{definition}[theorem]{Definition}
\theoremstyle{remark}
\newtheorem{remark}[theorem]{Remark}
\numberwithin{equation}{section}
\newcommand{\PicIm}{\operatorname{AutP}}
\newcommand{\AmExp}{\mathsf{m}}
\newcommand{\orbitsum}[2]{\Sigma_{#1}(#2)}
\newcommand{\PicForget}[2]{\overline{\operatorname{Pic}}(#1,#2)}
\begin{document}

\title{The numerical Amitsur group}
\author{Alexander Duncan}
\author{Shreya Sharma}

\address{Department of Mathematics, University of South Carolina, 
Columbia, SC 29208}
\email{duncan@math.sc.edu}
\email{shreyas@email.sc.edu}

\date{\today}
\subjclass[2020]{%
14L30, 
14E07 
}
\keywords{Amitsur subgroup, linearizations, equivariant birational
geometry}

\begin{abstract}
The Amitsur subgroup of a variety with a group action
measures the failure of the action to lift to the total spaces of its
line bundles.
We introduce the ``numerical Amitsur group,'' which is an approximation
of the ordinary Amitsur subgroup that can be computed using
only the Euler-Poincar\'e characteristic on the Picard group.
As an application, we find a uniform upper bound on the exponent of the
Amitsur subgroup that depends only on the dimension and arithmetic genus
of the variety and is independent of the group.
Finally, we compute Amitsur subgroups of toric varieties
using these ideas.
\end{abstract}

\maketitle

\section{Introduction}
\label{sec:intro}

Let $X$ be a smooth projective complex variety with an action of a
finite group $G$.
A line bundle $\mathcal{L}$ is \emph{linearizable} if there exists
a linear action of $G$ on the total space of $\mathcal{L}$ that is compatible with
the action of $G$ on $X$.
A choice of such action is a \emph{linearization} of $\mathcal{L}$
and the set of isomorphism classes of line bundles together with a choice of
linearization forms a group $\operatorname{Pic}(X,G)$.

Even if $g^\ast \mathcal{L} \cong \mathcal{L}$ for every $g \in G$,
the action may not lift to the total space.
There is
an extension of $G$ by $\mathbb{C}^\times$ called the \emph{lifting
group}, which splits if and only if $\mathcal{L}$ is linearizable.
We have an exact sequence
\[
0 \to \operatorname{Hom}(G,\mathbb{C}^\times) \to
\operatorname{Pic}(X,G) \to
\operatorname{Pic}(X)^G \xrightarrow{\partial}
H^2(G,\mathbb{C}^\times) .
\]
The image of $\partial$ is called the \emph{Amitsur subgroup}
and is denoted $\operatorname{Am}(X,G)$.
Roughly speaking, the Amitsur subgroup measures the failure of line
bundles to be $G$-linearizable.
Alternatively, the Amitsur subgroup describes the set of all possible
lifting groups of line bundles for $G$ acting on $X$.

Linearizations are central in Geometric Invariant
Theory~\cite{MumFogKir94Geometric} and the \emph{theta groups} from the
theory of abelian varieties are important examples of lifting
groups~\cite{Mumford08}.
The Amitsur subgroup is an equivariant birational invariant and can be
used to understand automorphism groups of Mori fiber
spaces~\cite[Appendices]{Blanc.Cheltsov.23}.
An arithmetic version of the Amitsur subgroup sits inside the Brauer
group of the base field and measures the failure of Galois descent for
line bundles~\cite{Liedtke17}; indeed, this notion preceded the
equivariant version we study here.

\medskip

Let $J$ be a finite group acting on $\operatorname{Pic}(X)$ by group
automorphisms.
Suppose that the Euler-Poincar\'e characteristic is
$J$-invariant as a function of $\operatorname{Pic}(X)$;
in other words $\chi(gD)=\chi(D)$ for
every line bundle $D$ and element $g \in J$.
This is not a very restrictive assumption since it holds for all
actions coming from automorphisms of $X$ and all Galois group actions
coming from the arithmetic setting.
However, it excludes totally arbitrary actions such as
$\mathcal{O}(m) \mapsto \mathcal{O}(-m)$ on $\operatorname{Pic}(\mathbb{P}^n)$
that do not naturally arise from geometry.

We introduce the \emph{numerical Amitsur group}
as the quotient
\[
\operatorname{Am}^\chi(X,J) :=
\operatorname{Pic}(X)^J/\operatorname{Pic}^\chi(X,J)
\]
of $\operatorname{Pic}(X)^J$ by the subgroup
\[
\operatorname{Pic}^\chi(X,J) :=
\left\langle
\chi(D)\sum_{E \in J\cdot D} E \ \middle| \ D \in \operatorname{Pic}(X)
\right\rangle,
\]
where $\chi(D)$ is the Euler-Poincar\'e characteristic of $D$,
and $J\cdot D$ is the orbit of $D$ in $\operatorname{Pic}(X)$.

There are several reasons for introducing this notion.
First of all, we show that the numerical Amitsur group is an
``upper bound'' for the ordinary Amitsur group: 

\begin{theorem}[cf.~Theorem~\ref{thm:can_surj}] \label{thm:1}
Let $G$ be a finite group and let $X$ be a smooth projective
complex $G$-variety.  There is a canonical surjection
$\operatorname{Am}^\chi(X,G) \to \operatorname{Am}(X,G)$. 
\end{theorem}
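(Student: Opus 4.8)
The plan is to reduce the theorem to the single claim that each generator $\chi(D)\sum_{E\in G\cdot D}E$ of $\operatorname{Pic}^\chi(X,G)$ admits a $G$-linearization. Granting this, exactness of the displayed sequence identifies $\ker\partial$ with the subgroup of $\operatorname{Pic}(X)^G$ consisting of $G$-linearizable line bundles; hence $\operatorname{Pic}^\chi(X,G)\subseteq\ker\partial$, and the surjection $\partial\colon\operatorname{Pic}(X)^G\twoheadrightarrow\operatorname{Am}(X,G)=\operatorname{im}\partial$ factors through $\operatorname{Pic}(X)^G/\operatorname{Pic}^\chi(X,G)=\operatorname{Am}^\chi(X,G)$, producing the desired canonical surjection. (Here $\chi$ is $G$-invariant on $\operatorname{Pic}(X)$ automatically, since $G$ acts through automorphisms of $X$ and these preserve Euler characteristics, so $\operatorname{Am}^\chi(X,G)$ is defined.)

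To prove the claim, fix $D\in\operatorname{Pic}(X)$, let $\Omega:=G\cdot D$ be its orbit, $\mathcal{M}:=\sum_{E\in\Omega}E\in\operatorname{Pic}(X)^G$ its orbit sum, and form the vector bundle $\mathcal{E}:=\bigoplus_{E\in\Omega}E$. Since $G$ permutes $\Omega$ we have $g^{*}\mathcal{E}\cong\mathcal{E}$ for all $g\in G$, so the pairs $(g,\widetilde g)$ with $g\in G$ and $\widetilde g$ a fibrewise-linear automorphism of the total space of $\mathcal{E}$ lying over $g\colon X\to X$ form a group $L$ sitting in an extension
\[
1\longrightarrow N\longrightarrow L\longrightarrow G\longrightarrow 1,\qquad N:=\operatorname{Aut}_{\mathcal{O}_X}(\mathcal{E}).
\]
Distinct line bundles in a common $G$-orbit admit no nonzero homomorphisms: if $E\ne E'$ lay in $\Omega$ with $E'\otimes E^{-1}$ effective and nontrivial, then, writing $E'=\sigma E$ for a finite-order automorphism $\sigma$ of $X$ coming from $G$, the pullbacks $\sigma^{j}(E'\otimes E^{-1})$ would all be effective and their product over a full period of $\sigma$ would telescope to $\mathcal{O}_X$; but a tensor product of nontrivial effective line bundles is never trivial. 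Hence $N=(\mathbb{C}^{\times})^{\Omega}$ acts by independent fibrewise scalars on the summands of $\mathcal{E}$; in particular $\det\mathcal{E}$ has class $\mathcal{M}$ in $\operatorname{Pic}(X)$, and $(z_E)_{E\in\Omega}\in N$ scales the fibres of $\det\mathcal{E}$ by $\prod_{E}z_E$.

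The heart of the argument is to produce a character $\Psi\colon L\to\mathbb{C}^{\times}$ restricting on $N$ to $(z_E)\mapsto\bigl(\prod_{E}z_E\bigr)^{\chi(D)}$. Now $L$ acts on each cohomology space $H^{i}(X,\mathcal{E})=\bigoplus_{E\in\Omega}H^{i}(X,E)$ with $(z_E)\in N$ acting on the summand $H^{i}(X,E)$ by the scalar $z_E$, so
\[
\Psi:=\bigotimes_{i}\bigl(\det H^{i}(X,\mathcal{E})\bigr)^{\otimes(-1)^{i}},
\]
the alternating tensor product of the determinants of the $L$-representations on $H^{\bullet}(X,\mathcal{E})$, is a character of $L$ whose restriction to $N$ is $(z_E)\mapsto\prod_{E}z_E^{\chi(E)}$. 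Since $\chi$ is constant on $\Omega$ with value $\chi(D)$, this restriction is exactly $(z_E)\mapsto\bigl(\prod_{E}z_E\bigr)^{\chi(D)}$, as wanted. Finally, through its action on $\det\mathcal{E}$ the group $L$ acts on the total space of the line bundle $\mathcal{M}^{\otimes\chi(D)}$ (read as $(\mathcal{M}^{-1})^{\otimes(-\chi(D))}$ when $\chi(D)<0$ and as $\mathcal{O}_X$ when $\chi(D)=0$), and there $N$ acts by $(z_E)\mapsto\bigl(\prod_{E}z_E\bigr)^{\chi(D)}$. Replacing the action of each $\ell\in L$ by that action followed by multiplication by the fibrewise scalar $\Psi(\ell)^{-1}$ --- still a group action, as these scalars are central --- gives a fibrewise-linear action of $L$ on $\mathcal{M}^{\otimes\chi(D)}$ lying over the $G$-action on $X$ on which $N$ now acts trivially, hence one that descends to $L/N=G$. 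The descended action is a $G$-linearization of $\mathcal{M}^{\otimes\chi(D)}$, i.e.\ of the element $\chi(D)\sum_{E\in G\cdot D}E$, which proves the claim.

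I expect the main obstacle to be the correct setup and analysis of the lifting group $L$ of the reducible bundle $\mathcal{E}$: that $L$ really is an extension of $G$ by the torus $N=(\mathbb{C}^{\times})^{\Omega}$ (where one needs the vanishing of homomorphisms between distinct orbit members) and that the determinant-of-cohomology character restricts to $N$ as stated --- the one point at which the hypothesis that $\chi$ is $G$-invariant is genuinely used. The reduction of the first paragraph and the twisting trick at the end are formal. Alternatively one could first prove, via the determinant of cohomology of $D$ for the action of $\operatorname{Stab}_G(D)$, that $\chi(D)\cdot D$ is $\operatorname{Stab}_G(D)$-linearizable, and then induce this linearization up to $G$; the vector-bundle formulation above simply merges the two steps.
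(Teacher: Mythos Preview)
Your proof is correct and close in spirit to the paper's, but organized differently. The paper proceeds in two steps: first it shows that $\chi(D)\cdot D$ is linearizable for the stabilizer $H=\operatorname{Stab}_G([D])$ (this is exactly Proposition~\ref{prop:Euler}, proved via the determinant-of-cohomology character of the lifting group of the single line bundle $D$), and then it proves a separate ``induction'' lemma (Lemma~\ref{lem:orbit_sum}) saying that if $D$ is $H$-linearizable then $\Sigma_G(D)$ is $G$-linearizable, by explicitly constructing a $G$-action on $\bigoplus_i t_i^\ast\mathcal{L}$ and taking the determinant. You instead work directly with the lifting group $L$ of the orbit bundle $\mathcal{E}=\bigoplus_{E\in\Omega}E$, identify its automorphism kernel as the torus $(\mathbb{C}^\times)^\Omega$, and build the determinant-of-cohomology character for $\mathcal{E}$ all at once; your final sentence even anticipates the paper's two-step decomposition as an alternative. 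The trade-off is that your merged route needs the extra lemma $\operatorname{Hom}(E,E')=0$ for distinct $E,E'\in\Omega$---your telescoping argument for this is correct and pleasant, but the paper sidesteps the issue entirely since at the $H$-level only one line bundle is in play and the induction step never examines $\operatorname{Aut}(\mathcal{E})$. Both arguments ultimately hinge on the same determinant-of-cohomology idea; yours is a bit more conceptual, the paper's a bit more hands-on.
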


Secondly, the numerical Amitsur group is often easier to study.
In practice, a group of automorphisms is often more complicated
than the image of the action on the Picard group.
This means that coarse estimates can be found for many groups at once.
Moreover, if $\operatorname{Pic}(X)$ and $\chi$ are well understood,
then the numerical Amitsur group can be effectively computed:

\begin{theorem}[cf.~Theorem~\ref{thm:Am_finite}] \label{thm:2}
If $\operatorname{Pic}(X)$ is finitely generated,
then there is a finite subset $S$ such that
\[
\operatorname{Pic}^\chi(X,J) :=
\left\langle
\chi(D)\sum_{E \in J\cdot D} E \ \middle| \ D \in S
\right\rangle.
\]
\end{theorem}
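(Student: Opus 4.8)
The plan is to prove the statement by a soft finiteness argument; the hypothesis that $\operatorname{Pic}(X)$ is finitely generated enters only through the fact that a finitely generated abelian group, hence every subgroup of it, is a Noetherian $\mathbb{Z}$-module. First I would record that each generator
\[
v_D \;:=\; \chi(D)\sum_{E \in J\cdot D} E
\]
already lies in $\operatorname{Pic}(X)$: the orbit sum $\sum_{E\in J\cdot D}E$ is an element of $\operatorname{Pic}(X)$ — in fact of $\operatorname{Pic}(X)^J$, since every $g\in J$ permutes the orbit $J\cdot D$ — and $\chi(D)\in\mathbb{Z}$. Hence $\operatorname{Pic}^\chi(X,J)=\langle v_D\mid D\in\operatorname{Pic}(X)\rangle$ is a subgroup of the finitely generated abelian group $\operatorname{Pic}(X)$.

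From here the argument is essentially formal. As $\operatorname{Pic}(X)$ is a Noetherian $\mathbb{Z}$-module, its subgroup $\operatorname{Pic}^\chi(X,J)$ is finitely generated; fix generators $w_1,\dots,w_k$. By the very definition of $\operatorname{Pic}^\chi(X,J)$, each $w_i$ is a finite integer combination of elements $v_D$, say $w_i=\sum_j n_{ij}\,v_{D_{ij}}$ with $D_{ij}\in\operatorname{Pic}(X)$. Let $S$ be the finite set of all classes $D_{ij}$ that occur. Then $\langle v_D\mid D\in S\rangle$ contains every $w_i$, hence all of $\operatorname{Pic}^\chi(X,J)$, while the opposite inclusion $\langle v_D\mid D\in S\rangle\subseteq\operatorname{Pic}^\chi(X,J)$ is immediate from $S\subseteq\operatorname{Pic}(X)$. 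This is the asserted equality.

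The only point worth flagging is that this establishes the \emph{existence} of a finite $S$ but exhibits no explicit one, whereas the effective computation of $\operatorname{Am}^\chi(X,J)$ promised in the introduction needs an explicit choice. For that I would pass to $\operatorname{Pic}(X)$ modulo torsion, where by Hirzebruch--Riemann--Roch $\chi$ is the restriction of a polynomial of degree at most $\dim X$ (in particular $\chi(D+t)=\chi(D)$ for torsion $t$), and then bound the coordinates of the classes that must be retained in $S$ in terms of a fixed generating set of $\operatorname{Pic}(X)$, $\dim X$, and $|J|$, using this polynomiality together with the finiteness of the torsion subgroup. That quantitative refinement is the genuinely delicate part; the existence statement itself is purely formal.
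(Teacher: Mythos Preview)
Your proof of the bare existence statement is correct, but it takes a genuinely different route from the paper. You invoke the Noetherian property of finitely generated abelian groups: any subgroup of $\operatorname{Pic}(X)$ is finitely generated, and unwinding the generators gives a finite $S$. This is shorter and more elementary than what the paper does, and it is entirely adequate for the statement as written.

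The paper, by contrast, builds $S$ explicitly. It develops a notion of \emph{integrally-poised} subsets of $\mathbb{Z}^r$ (Section~\ref{sec:polynomials}): a finite set $S$ on which every numerical polynomial of a given degree can be reconstructed by an integral interpolation formula, with the lattice simplex $S_n=\{\mathbf{a}\in\mathbb{Z}_{\ge 0}^r:\sum a_i\le n\}$ as the main example. Applied to $g(\mathbf{x})=\chi(D)$ and $f(\mathbf{x})=g(\mathbf{x})x_i$, this yields the identity $\langle g(\mathbf{x})\mathbf{x}\mid \mathbf{x}\in\mathbb{Z}^r\rangle=\langle g(\mathbf{x})\mathbf{x}\mid \mathbf{x}\in S\rangle$ (Proposition~\ref{prop:poised}). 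The proof of Theorem~\ref{thm:Am_finite} then stratifies $\operatorname{Pic}(X)$ by stabilizer subgroups $H\le J$, applies this to each sublattice $\operatorname{Pic}(X)^H$, and takes $S=\bigcup_H S_H$. So the paper's argument is longer but constructive: it tells you exactly which finitely many divisors to test, which is the point of calling $\operatorname{Am}^\chi$ ``effectively computable'' in the introduction. Your closing paragraph correctly identifies that this effectivity is the real content, and your sketch (pass to $\operatorname{Pic}(X)/\text{torsion}$, use polynomiality of $\chi$) points in the right direction, though the paper's actual mechanism---integral interpolation on a lattice simplex plus the stabilizer stratification to handle orbit sums---is more specific than what you outline.
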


To demonstrate these ideas in practice, we give several applications.
We prove a uniform bound on the exponent of the Amitsur group
using only the dimension and arithmetic genus:

\begin{theorem}[cf.~Theorem~\ref{thm:uniform}] \label{thm:3}
Let $G$ be a finite group and let $X$ be a smooth projective complex
$G$-variety of dimension $n$.
The exponent of $\operatorname{Am}(X,G)$ divides
\[
(1+(-1)^np_a) \operatorname{lcm}\{1,\ldots,n+1\}
\]
where $p_a$ is the arithmetic genus of $X$.
\end{theorem}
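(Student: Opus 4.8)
The plan is to prove the bound for the numerical Amitsur group $\operatorname{Am}^\chi(X,G)$ and then transport it along the canonical surjection $\operatorname{Am}^\chi(X,G)\twoheadrightarrow\operatorname{Am}(X,G)$ of Theorem~\ref{thm:1}; since the exponent of a quotient divides the exponent of the group, it suffices to show that $N:=(1+(-1)^np_a)\operatorname{lcm}\{1,\dots,n+1\}$ kills every element of $\operatorname{Am}^\chi(X,G)$. It is worth noting in advance that this reduction makes $\operatorname{Pic}(X)$ irrelevant except as an abelian group, so no finite generation hypothesis (hence no use of Theorem~\ref{thm:2}) is needed, and that the remaining ingredients are purely formal facts about Hilbert polynomials and integer-valued polynomials.

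Next I would argue class by class. Fix $D\in\operatorname{Pic}(X)^G$ and write $[D]$ for its image in $\operatorname{Am}^\chi(X,G)$. For each integer $k$ the line bundle $kD$ is again $G$-invariant, so the defining generator $\chi(D')\sum_{E\in G\cdot D'}E$ of $\operatorname{Pic}^\chi(X,G)$, taken at $D'=kD$, collapses to $\chi(kD)\cdot kD$; reading this in the quotient gives $\chi(kD)\,k\,[D]=0$. By Snapper's theorem the function $k\mapsto\chi(kD)$ is given by a polynomial $P\in\mathbb{Q}[X]$ of degree at most $n$, integer-valued on $\mathbb{Z}$, with $P(0)=\chi(\mathcal{O}_X)=1+(-1)^np_a$ by the definition of the arithmetic genus. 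Setting $Q(X):=X\,P(X)$, we obtain $Q(k)\,[D]=0$ for every $k\in\mathbb{Z}$ (finitely many $k$ already suffice, since $\deg Q\le n+1$). Hence $[D]$ is annihilated by $d:=\gcd\{Q(k):k\in\mathbb{Z}\}$, and the problem reduces to bounding this integer.

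For that I would expand $P$ in the basis of binomial coefficients, $P=\sum_{j=0}^{n}a_j\binom{X}{j}$ with $a_j\in\mathbb{Z}$ and $a_0=1+(-1)^np_a$, and push this through the identity $X\binom{X}{j}=(j+1)\binom{X}{j+1}+j\binom{X}{j}$ to get $Q=\sum_{i=1}^{n+1}i\,(a_{i-1}+a_i)\binom{X}{i}$, with the convention $a_{n+1}=0$. The coefficients of an integer-valued polynomial in the binomial basis are its iterated finite differences at $0$, hence $\mathbb{Z}$-linear combinations of finitely many of its values, so $d$ divides each $i(a_{i-1}+a_i)$. Since $i\mid L:=\operatorname{lcm}\{1,\dots,n+1\}$, this gives $d\mid L(a_{i-1}+a_i)$ for $i=1,\dots,n+1$, and therefore $d$ divides the telescoping alternating sum $\sum_{i=1}^{n+1}(-1)^{i-1}L(a_{i-1}+a_i)=L\,a_0=N$. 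Thus $N[D]=0$; as $[D]$ ranged over all of $\operatorname{Am}^\chi(X,G)$, Theorem~\ref{thm:1} then finishes the proof.

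I expect the only delicate point to be the manipulation in the third paragraph: carrying out the change of basis for $Q$ correctly and checking that multiplication by $L$ cancels the extra factors $i$ so precisely that the alternating sum collapses to $\chi(\mathcal{O}_X)$ rather than to some uncontrolled combination of the $a_j$. There is no geometric obstacle — the whole conceptual move is to probe the defining relations of $\operatorname{Pic}^\chi(X,G)$ with all multiples $kD$ at once, turning one relation into a polynomial's worth of them, and then to recover $\chi(\mathcal{O}_X)$ from the finite differences of that polynomial.
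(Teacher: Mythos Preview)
Your proof is correct. Both you and the paper reduce to the same key fact---that for $G$-invariant $D$ the class $[D]$ is killed by $d=\gcd_k\{k\chi(kD)\}$---and then bound $d$; the differences lie in how each step is carried out.

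For the reduction, the paper works directly in the ordinary Amitsur group: Proposition~\ref{prop:Euler} gives $\chi(kD)\,\partial(kD)=0$, and this is packaged as Lemma~\ref{lem:mdivf}. You instead pass through the numerical Amitsur group and invoke Theorem~\ref{thm:1}. Since you only ever feed in $G$-invariant classes $kD$, the orbit sums in the definition of $\operatorname{Pic}^\chi$ collapse to singletons, and your route is equivalent to (but a detour from) the paper's; Theorem~\ref{thm:1} is not really needed here.

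For the arithmetic bound, the paper proves the sharper Lemma~\ref{lem:Cheb_bound}, namely that $\gcd\{kf(k)\}/\gcd\{f(k)\}$ divides $L=\operatorname{lcm}\{1,\dots,n+1\}$, via a prime-by-prime descent through the congruences of Corollary~\ref{cor:mod_condition}. Your telescoping alternating sum $\sum_{i=1}^{n+1}(-1)^{i-1}L(a_{i-1}+a_i)=La_0$ is a cleaner and more direct route to the weaker statement $d\mid a_0L$, which is exactly what the theorem needs (only $a_0=\chi(\mathcal{O}_X)$, not $\gcd\{f(k)\}$, appears in the final bound). The paper's version buys a genuinely stronger lemma with independent content; yours buys brevity for this particular application.
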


We also demonstrate how to compute the Amitsur group for
toric varieties (Theorem~\ref{thm:toric}).
The numerical Amitsur group turns out to be a sharp upper bound
in many cases:

\begin{theorem}[cf.~Proposition~\ref{prop:AmTvsAmChi}] \label{thm:4}
Let $X$ be a smooth projective complex Fano toric variety with a reductive automorphism group.
Let $J$ be a subgroup of the image of $\operatorname{Aut}(X)$
in $\operatorname{Aut}(\operatorname{Pic}(X))$.
There exists a finite group $G$ acting faithfully on $X$ and
an isomorphism
$\operatorname{Am}^\chi(X,J) \cong \operatorname{Am}(X,G)$.
\end{theorem}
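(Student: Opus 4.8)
The plan is to realize $\operatorname{Am}^\chi(X,J)$ as $\operatorname{Am}(X,G)$ for a finite $G\subseteq\operatorname{Aut}(X)$ built from a large torsion subgroup of the defining torus together with a lift of $J$ into the normalizer of that torus. The point of departure is Theorem~\ref{thm:can_surj}: as soon as $G$ maps onto $J$ in $\operatorname{Aut}(\operatorname{Pic}(X))$ there is a canonical surjection $\operatorname{Am}^\chi(X,J)=\operatorname{Am}^\chi(X,G)\twoheadrightarrow\operatorname{Am}(X,G)$, and $\operatorname{Pic}^\chi(X,J)\subseteq\PicForget{X}{G}$, the subgroup of $G$-linearizable classes. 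Thus it suffices to produce such a $G$ with $\PicForget{X}{G}=\operatorname{Pic}^\chi(X,J)$; that is, $G$ should have the largest Amitsur group allowed by the numerical bound.

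For the construction, let $T_0\subseteq\operatorname{Aut}(X)$ be the defining torus, with character lattice $M$, ray set $\Sigma(1)$, and fundamental exact sequence $0\to M\to\mathbb Z^{\Sigma(1)}\to\operatorname{Pic}(X)\to0$ of $\operatorname{Aut}(\Sigma)$-modules, where $\operatorname{Aut}(\Sigma)$ is the finite group of lattice automorphisms of the fan. The torus $T_0$ is a maximal torus of $\operatorname{Aut}(X)$, and because $\operatorname{Aut}(X)$ is reductive we have $N_{\operatorname{Aut}(X)}(T_0)=T_0\rtimes\operatorname{Aut}(\Sigma)$ and the image of $\operatorname{Aut}(X)$ in $\operatorname{Aut}(\operatorname{Pic}(X))$ equals the image of $\operatorname{Aut}(\Sigma)$. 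Let $\Gamma\subseteq\operatorname{Aut}(\Sigma)$ be the preimage of $J$, choose $\ell$ divisible by $|\Gamma|$, and set $G:=T_0[\ell]\rtimes\Gamma$, where $T_0[\ell]$ is the $\ell$-torsion subgroup. Then $G$ is finite, acts faithfully on $X$, and induces $J$ on $\operatorname{Pic}(X)$, so $\operatorname{Pic}(X)^G=\operatorname{Pic}(X)^J$.

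I would then compute $\operatorname{Am}(X,G)$ via Theorem~\ref{thm:toric}. Put $A:=T_0[\ell]$, so $\operatorname{Hom}(A,\mathbb C^\times)\cong M/\ell M$ as $\Gamma$-modules. Every line bundle on a toric variety is canonically $T_0$-linearized, hence canonically $A$-linearized, so for $J$-invariant $L$ the obstruction $\partial_G(L)\in H^2(G,\mathbb C^\times)$ restricts to $0$ on $A$; it also restricts to $\partial_\Gamma(L)=0$ on $\Gamma$, since $\Gamma$ fixes the identity of $T_0\subseteq X$ and so has trivial Amitsur group. Against these two facts, the Lyndon--Hochschild--Serre spectral sequence of $1\to A\to G\to\Gamma\to1$ identifies $\partial_G(L)$ with the image of $L$ under the connecting map $\delta\colon\operatorname{Pic}(X)^J\to H^1(\Gamma,M/\ell M)$ of the fundamental sequence, up to an injection into $H^2(G,\mathbb C^\times)$; this is the content of Theorem~\ref{thm:toric} in the present setting. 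Since $|\Gamma|\mid\ell$, reduction mod $\ell$ gives an injection $H^1(\Gamma,M)\hookrightarrow H^1(\Gamma,M/\ell M)$, so $\ker\partial_G=\ker\big(\operatorname{Pic}(X)^J\to H^1(\Gamma,M)\big)=\operatorname{image}\big((\mathbb Z^{\Sigma(1)})^{\Gamma}\to\operatorname{Pic}(X)\big)$, the subgroup of $\operatorname{Pic}(X)^J$ spanned by the orbit-sum classes $[D_O]:=\big[\sum_{\rho\in O}D_\rho\big]$ over $\Gamma$-orbits $O$ of rays. Hence $\operatorname{Am}(X,G)\cong\operatorname{Pic}(X)^J/\ker\delta$, independently of $\ell$.

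It remains to prove $\ker\delta=\operatorname{Pic}^\chi(X,J)$; the inclusion $\operatorname{Pic}^\chi(X,J)\subseteq\ker\delta$ is again the bound of Theorem~\ref{thm:can_surj}, so the heart of the matter --- and the step I expect to be the main obstacle --- is to show every $[D_O]$ lies in $\operatorname{Pic}^\chi(X,J)$. A bookkeeping argument (grouping the rays of $O$ by linear-equivalence class and noting that the resulting multiplicities are constant along the $J$-orbit) gives $[D_O]=n(O)\cdot\orbitsum{J}{[D_\rho]}$ for any $\rho\in O$, where $\orbitsum{J}{[D_\rho]}=\sum_{E\in J\cdot[D_\rho]}E$ and $n(O)=\#\{\rho'\in O:D_{\rho'}\sim D_\rho\}$. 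It then suffices to relate $n(O)$ to the Euler characteristics $\chi(\mathcal O_X(k[D_\rho]))$ so that $n(O)\cdot\orbitsum{J}{[D_\rho]}$ is a $\mathbb Z$-combination of the generators $\chi(D)\sum_{E\in J\cdot D}E$ of $\operatorname{Pic}^\chi(X,J)$; I expect the clean form to be $n(O)=\chi(\mathcal O_X(D_\rho))$, so that $[D_O]$ is itself one such generator. This reduces to the combinatorial identity that, for a smooth projective Fano toric variety, $\chi(\mathcal O_X(D_\rho))$ is precisely the number of torus-invariant effective divisors linearly equivalent to $D_\rho$, all of them prime and carried into the orbit $O$ by $\Gamma$ --- the place where ampleness of $-K_X$ and the combinatorial descriptions of $\chi$ and of toric cohomology are used. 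Once this identity and the fact that the $[D_O]$ generate $\operatorname{Pic}^\chi(X,J)$ are established, the canonical surjection of the first paragraph is the desired isomorphism $\operatorname{Am}^\chi(X,J)\cong\operatorname{Am}(X,G)$.
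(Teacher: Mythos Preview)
Your approach is essentially the paper's, repackaged without the intermediate group $\operatorname{Am}^T(X,J)$: the paper defines $\operatorname{Pic}^T(X,J)=\langle n_i\,\orbitsum{J}{[D_i]}\rangle$, identifies it with the image of $(\mathbb{Z}^{\Sigma(1)})^{\Gamma}$ in $\operatorname{Pic}(X)^J$ (your $\ker\delta$), constructs the same $G=T_0[\ell]\rtimes\Gamma$ with $\ell=|\Gamma|$, and proves the two inclusions $\operatorname{Pic}^T\subseteq\operatorname{Pic}^\chi$ (via $n_i=\chi(D_i)$) and $\PicForget{X}{G}\subseteq\operatorname{Pic}^T$ separately. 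Two places where your sketch is thinner than the paper: first, the equality $\ker\partial_G=\ker\delta$ is \emph{not} the content of Theorem~\ref{thm:toric} (that theorem only gives $\ker\delta\subseteq\ker\partial_G$); the reverse inclusion is a separate argument (Proposition~\ref{prop:AmT_optimal}), which the paper carries out by an explicit diagram chase with character lattices rather than by invoking Lyndon--Hochschild--Serre, and your spectral-sequence sketch would need to track the canonical $A$-linearization carefully to get the same conclusion. Second, the reductive hypothesis is not used for the normalizer description (that holds for any smooth complete toric variety) but precisely for your claimed identity $n(O)=\chi(D_\rho)$: the paper combines Musta\c{t}\u{a} vanishing (Fano, giving $\chi(D_\rho)=h^0(D_\rho)$) with Cox's description of Demazure roots (reductive, giving $h^0(D_\rho)=n_i$), which is exactly the ``all of them prime'' step you anticipate.
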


The rest of the paper is structured as follows.
In Section~\ref{sec:prelims}, we establish basic results about
linearization and the Amitsur subgroup.
In Section~\ref{sec:polynomials}, we consider numerical polynomials and
prove several technical results that will be used in subsequent
sections.
In Section~\ref{sec:upper_bounds}, we explore how the Euler-Poincar\'e
characteristic can be used to bound orders of elements in the Amitsur
group; in particular, we prove Theorem~\ref{thm:3}.
In Section~\ref{sec:numerical}, we introduce the numerical
Amitsur group and prove Theorems~\ref{thm:1}~and~\ref{thm:2}.
In Section~\ref{sec:toric}, we determine the Amitsur subgroups
of toric varieties and prove Theorem~\ref{thm:4}.
In Section~\ref{sec:examples}, we work out several toric examples in detail;
this serves both as a demonstration of the theory and to illustrate some
of its subtleties.

\section{Preliminaries}
\label{sec:prelims}

Let $G$ be a finite group.
Let $X$ be a smooth projective complex $G$-variety.
In other words, there is an action of $G$ on $X$ by morphisms of
varieties.  We do \emph{not} assume the action is faithful.

\subsection{Linearizations}

We recall the notion of $G$-linearization of a vector bundle.
See, for example,
\cite{Dol99Invariant,Brion18,MumFogKir94Geometric}.

Given vector bundles $\pi : E \to X$, $\pi' : E' \to X'$,
and a morphism $g : X \to X'$ of varieties,
an \emph{isomorphism of vector bundles lifting $g$} is
an isomorphism $\varphi_g : E \to E'$,
which is linear on fibers,
fitting into a Cartesian square
\begin{equation} \label{eq:bundle_morphism}
\xymatrix{
E \ar[r]^{\varphi_g} \ar[d] & E' \ar[d] \\
X \ar[r]^g & X'
}
\end{equation}
Equivalently, if $\mathcal{E}$ and $\mathcal{E}'$ are the corresponding
locally free sheaves, then $\varphi_g$ 
corresponds to an isomorphism $\mathcal{E} \cong g^\ast
\mathcal{E}'$ of locally free sheaves on $X$.

We will mainly be concerned with invertible sheaves and line bundles.
There is a canonical homomorphism
\[
\alpha :
\operatorname{Aut(X)} \to
\operatorname{Aut}(\operatorname{Pic}(X))
\]
given by $g \mapsto (g^\ast)^{-1}$.
We use the notation
$\PicIm(X,G)$ to denote the image of the action of $G$ on
$\operatorname{Pic}(X)$; in other words,
\[
\PicIm(X,G) := \operatorname{im} \left( G \to
\operatorname{Aut(X)} \xrightarrow{\alpha}
\operatorname{Aut}(\operatorname{Pic}(X))
\right) .
\]
We write $\PicIm(X) := \PicIm(X,\operatorname{Aut}(X))$
for the case where $G = \operatorname{Aut}(X)$.

A locally free sheaf $\mathcal{E}$ is \emph{$G$-invariant}
if $g^\ast \mathcal{E} \cong \mathcal{E}$ for every $g \in G$.
In the case where $\mathcal{L}$ is an invertible sheaf,
$\mathcal{L}$ is $G$-invariant if and only if
$[\mathcal{L}] \in \operatorname{Pic}(X)^G = \operatorname{Pic}(X)^{\PicIm(X,G)}$.

\begin{definition}
If $\mathcal{E}$ is a $G$-invariant locally free sheaf of finite rank then
the \emph{lifting group} $G_{\mathcal{E}}$ is the group of all
isomorphisms of vector bundles \eqref{eq:bundle_morphism}
lifting $g$ over all $g \in G$.
\end{definition}

The lifting group sits in a canonical exact sequence
\begin{equation} \label{eq:lifting_sequence_vb}
1 \to \operatorname{GL}_r(\mathbb{C}) \to G_{\mathcal{E}} \to G \to 1,
\end{equation}
where $\operatorname{GL}_r(\mathbb{C})$ is the group of automorphisms of
the locally free sheaf $\mathcal{E}$ of rank $r$
lifting the identity morphism of $X$.

In the case where $\mathcal{L}$ is an invertible sheaf, the canonical
sequence is
\begin{equation} \label{eq:lifting_sequence}
1 \to \mathbb{C}^\times \to G_{\mathcal{L}} \to G \to 1,
\end{equation}
where $\mathbb{C}^\times$ is a central subgroup corresponding
to the automorphisms of the corresponding line bundle $L$
over the identity morphism of $X$.

Since the lifting group $G_{\mathcal{L}}$ acts on the line bundle $L$,
each element $g$ in $G_{\mathcal{L}}$ induces a morphism
$g_\ast=(g^\ast)^{-1} : H^i(X,L) \to H^i(X,L)$ for every $i \ge 0$.
Since $g_\ast \circ h_\ast = (g \circ h)_\ast$ for the pair $(X,L)$,
this gives each $H^i(X,L)$ an action of $G_{\mathcal{L}}$.
Indeed, we have the following:

\begin{proposition}
The lifting group $G_{\mathcal{L}}$ acts on $H^i(X,\mathcal{L})$
linearly with $\mathbb{C}^\times$ in $G_{\mathcal{L}}$ identified with the
action of the nonzero scalar matrices.
If $G$ acts faithfully on $X$ and $\mathcal{L}$ is very ample,
then the action of $G_{\mathcal{L}}$ on $H^0(X,\mathcal{L})$
is faithful.
\end{proposition}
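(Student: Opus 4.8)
The plan is to unpack the definition of the lifting group concretely. An element of $G_{\mathcal{L}}$ is an isomorphism $\varphi_g : L \to L$ of line bundles lifting some $g \in G$, i.e.\ fitting into the Cartesian square \eqref{eq:bundle_morphism} with $X' = X$ and the bottom map $g$. Pulling back sections, such a $\varphi_g$ induces $g_\ast = (g^\ast)^{-1}$ on $H^i(X,\mathcal{L}) = H^i(X, L)$; since the assignment is functorial (the excerpt already notes $g_\ast \circ h_\ast = (gh)_\ast$), we get a genuine linear action of $G_{\mathcal{L}}$ on each $H^i(X,\mathcal{L})$. For the identification of the central $\mathbb{C}^\times$: an element $\lambda \in \mathbb{C}^\times \subseteq G_{\mathcal{L}}$ is, by definition, the automorphism of $L$ over $\mathrm{id}_X$ given by fiberwise multiplication by $\lambda$. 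On global sections this is literally multiplication by $\lambda$ (or $\lambda^{-1}$, depending on the variance convention fixed above — I would just match whichever sign makes $g_\ast = (g^\ast)^{-1}$ consistent), hence acts as the scalar matrix $\lambda I$. This proves the first sentence.

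For the faithfulness claim, suppose $G$ acts faithfully on $X$ and $\mathcal{L}$ is very ample, and let $\varphi_g \in G_{\mathcal{L}}$ act trivially on $H^0(X,\mathcal{L})$. First I would argue that the underlying $g \in G$ must be the identity: since $\mathcal{L}$ is very ample, the complete linear system $|\mathcal{L}|$ gives a closed embedding $X \hookrightarrow \mathbb{P}(H^0(X,\mathcal{L})^\vee)$, and $g$ is the restriction to $X$ of the projective-linear automorphism induced by $g_\ast$ on $H^0(X,\mathcal{L})$. If $g_\ast$ is the identity on $H^0$, then $g$ fixes every point of $X$, so $g = \mathrm{id}_X$ because the action on $X$ is faithful. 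Now $\varphi_g$ lies in the kernel $\mathbb{C}^\times$ of \eqref{eq:lifting_sequence}, i.e.\ $\varphi_g$ is multiplication by some scalar $\lambda$; but we just showed this scalar acts on $H^0(X,\mathcal{L})$ as $\lambda I$, and triviality forces $\lambda = 1$, hence $\varphi_g = \mathrm{id}$. Therefore the action on $H^0(X,\mathcal{L})$ is faithful.

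The only genuinely delicate point — and the one I would be most careful about — is the bookkeeping in the step "$g$ is recovered from $g_\ast$ on $H^0$." One must check that the embedding $X \hookrightarrow \mathbb{P}(H^0(X,\mathcal{L})^\vee)$ is genuinely $G_{\mathcal{L}}$-equivariant for the projectivization of the $H^0$-action, so that $g_\ast = \mathrm{id}$ really does pin down $g$; this is where the Cartesian-square condition \eqref{eq:bundle_morphism} (as opposed to merely $g^\ast\mathcal{L} \cong \mathcal{L}$) is used, since it is precisely this compatibility that makes $\varphi_g$ send the fiber of $L$ over $x$ to the fiber over $g(x)$ and hence makes the induced map on $H^0$ intertwine evaluation at $x$ with evaluation at $g(x)$. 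Everything else is a routine diagram chase once the conventions are fixed; I would state the variance convention once at the start of the proof and then proceed.
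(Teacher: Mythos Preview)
The paper states this proposition without proof, treating it as standard. Your argument is correct and is exactly the expected one: functoriality gives the linear action on cohomology, the central $\mathbb{C}^\times$ acts by scalars by its very definition as fiberwise scaling, and in the very ample case faithfulness is recovered from the equivariant projective embedding $X \hookrightarrow \mathbb{P}(H^0(X,\mathcal{L})^\vee)$. Your care about verifying that evaluation at $x$ intertwines with evaluation at $g(x)$ under the Cartesian-square condition is well-placed and your sketch of that check is right.

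One small comment: the parenthetical hedge about ``or $\lambda^{-1}$, depending on the variance convention'' is unnecessary and slightly weakens the write-up. With the paper's convention $g_\ast = (g^\ast)^{-1}$, fiberwise multiplication by $\lambda$ over $\mathrm{id}_X$ sends a section $s$ to $\lambda s$, so the scalar is genuinely $\lambda$ and there is no ambiguity. I would drop the hedge and simply assert the identification.
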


For an invertible sheaf $\mathcal{L}$ in
$\operatorname{Pic}(X)^G$ with $n+1=\dim H^0(X,\mathcal{L}) \ge 2$,
we obtain a rational map
\[
\varphi_{\mathcal{L}} : X \dasharrow \mathbb{P}^n.
\]
There is a canonical action of $G$ on $\mathbb{P}^n$
such that the rational map $\varphi_{\mathcal{L}}$ is $G$-equivariant.
Moreover, if $\mathcal{L}$ is very ample, then
the extension $G_{\mathcal{L}}$ is the pullback of
\[
1 \to \mathbb{C}^\times \to \operatorname{GL}_{n+1}(\mathbb{C}) \to \operatorname{PGL}_{n+1}(\mathbb{C}) \to 1
\]
along the embedding $G \to \operatorname{PGL}_{n+1}(\mathbb{C})$.

\begin{definition}
A $G$-invariant locally free sheaf $\mathcal{E}$ of finite rank is
\emph{$G$-linearizable} if
the extension \eqref{eq:lifting_sequence_vb} defining $G_{\mathcal{E}}$
splits.
A \emph{$G$-linearization} of $\mathcal{E}$ is a choice of $G$-action on
$\mathcal{E}$ that splits the sequence.
The set of isomorphism classes of
invertible sheaves with a choice of linearization form a group,
which we denote $\operatorname{Pic}(X,G)$.
The image of $\operatorname{Pic}(X,G)$ in
$\operatorname{Pic}(X)$ will be denoted
$\PicForget{X}{G}$.
\end{definition}

\begin{proposition}
There is a canonical exact sequence
\begin{equation} \label{eq:Leray}
\begin{gathered}
0 \to \operatorname{Hom}(G,\mathbb{C}^\times) \to
\operatorname{Pic}(X,G) \to
\operatorname{Pic}(X)^G \xrightarrow{\partial}
H^2(G,\mathbb{C}^\times),
\end{gathered}
\end{equation}
which is contravariantly functorial in both $X$ and $G$.
\end{proposition}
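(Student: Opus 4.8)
The plan is to build the four arrows directly out of the lifting-group formalism of Section~\ref{sec:prelims} and then check exactness one spot at a time; this is more transparent than, though equivalent to, deducing the sequence from the five-term exact sequence of the Hochschild--Serre spectral sequence $H^p(G,H^q(X,\mathcal{O}_X^\times))\Rightarrow H^{p+q}_G(X,\mathcal{O}_X^\times)$. Throughout I use that $X$ is connected and projective, so that $H^0(X,\mathcal{O}_X^\times)=\mathbb{C}^\times$ with trivial $G$-action, and hence $H^1(G,\mathbb{C}^\times)=\operatorname{Hom}(G,\mathbb{C}^\times)$.

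First I would pin down the maps. The arrow $\operatorname{Pic}(X,G)\to\operatorname{Pic}(X)^G$ forgets the linearization; its image lies in $\operatorname{Pic}(X)^G$ because a linearizable invertible sheaf is $G$-invariant. The arrow $\operatorname{Hom}(G,\mathbb{C}^\times)\to\operatorname{Pic}(X,G)$ sends a character $\eta$ to $\mathcal{O}_X$ equipped with the canonical pullback linearization twisted by $\eta$; this is a homomorphism because the tensor square of the canonical linearization of $\mathcal{O}_X$ is again canonical and the twists multiply. The connecting map $\partial$ sends $[\mathcal{L}]\in\operatorname{Pic}(X)^G$ to the class in $H^2(G,\mathbb{C}^\times)$ of the extension \eqref{eq:lifting_sequence} defining $G_{\mathcal{L}}$. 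It is well defined because an isomorphism $\mathcal{L}\cong\mathcal{L}'$ induces an isomorphism $G_{\mathcal{L}}\cong G_{\mathcal{L}'}$ that is the identity on the central $\mathbb{C}^\times$ and on the quotient $G$, and it is a homomorphism because $G_{\mathcal{L}\otimes\mathcal{M}}$ is the Baer sum of $G_{\mathcal{L}}$ and $G_{\mathcal{M}}$: tensoring fibrewise isomorphisms gives a natural map $G_{\mathcal{L}}\times_G G_{\mathcal{M}}\to G_{\mathcal{L}\otimes\mathcal{M}}$ that multiplies the two central copies of $\mathbb{C}^\times$, which is exactly the Baer-sum recipe.

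Next I would verify exactness. At $\operatorname{Hom}(G,\mathbb{C}^\times)$: an isomorphism between the twisted structures attached to $\eta_1$ and $\eta_2$ is a scalar $\lambda\in\mathbb{C}^\times$ with $\lambda\eta_1(g)=\eta_2(g)\lambda$ for all $g\in G$ (using that the pullback action commutes with multiplication by constants), whence $\eta_1=\eta_2$ and the first map is injective. At $\operatorname{Pic}(X,G)$: a linearized sheaf dies in $\operatorname{Pic}(X)$ exactly when the underlying sheaf is $\mathcal{O}_X$, in which case the linearization is a splitting of $1\to\mathbb{C}^\times\to(\mathcal{O}_X)_G\to G\to1$; since this extension is canonically split by the pullback action and $\mathbb{C}^\times$ is central, any splitting differs from the canonical one by a homomorphism $G\to\mathbb{C}^\times$, so the kernel of the forgetful map is precisely the image of the first arrow. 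At $\operatorname{Pic}(X)^G$: by the definition of $G$-linearizability, $[\mathcal{L}]$ lifts to $\operatorname{Pic}(X,G)$ if and only if $G_{\mathcal{L}}$ splits, i.e.\ if and only if $\partial[\mathcal{L}]=0$.

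For functoriality, a $G$-equivariant morphism $f\colon X\to Y$ induces pullback maps on all four terms that commute with the arrows, because $f^\ast$ is compatible with lifting groups, with the $g^\ast$-actions, and with the extension classes; and a group homomorphism $\varphi\colon H\to G$ (with $H$ acting on $X$ through $\varphi$) induces restriction of linearizations $\operatorname{Pic}(X,G)\to\operatorname{Pic}(X,H)$, restriction of characters, the inclusion $\operatorname{Pic}(X)^G\hookrightarrow\operatorname{Pic}(X)^H$, and the inflation--restriction map on $H^2(-,\mathbb{C}^\times)$, all compatible by naturality of $G\mapsto G_{\mathcal{L}}$. I expect the only genuinely nonformal point to be the identification of $G_{\mathcal{L}\otimes\mathcal{M}}$ with the Baer sum --- in particular, checking that ``tensor the fibrewise isomorphisms'' is well defined on the fibre product over $G$ and descends modulo the antidiagonal $\mathbb{C}^\times$; everything else is bookkeeping.
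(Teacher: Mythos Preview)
Your proposal is correct and follows essentially the same route as the paper: the paper's proof merely recalls the concrete interpretations of the three maps (splittings of $G_{\mathcal{O}_X}$ give the first map, the second is the forgetful map, and $\partial$ sends $[\mathcal{L}]$ to the extension class of $G_{\mathcal{L}}$), mentions the spectral-sequence alternative, and leaves the exactness and functoriality checks to the reader. Your write-up simply fills in those omitted verifications, including the Baer-sum identification needed to show $\partial$ is a homomorphism.
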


\begin{proof}
This is well known.
Perhaps, the most sophisticated way to see this is to use
a Leray spectral sequence of stacks
(see, e.g., \cite[3.1]{Kresch.Tschinkel22} and
\cite[1.3]{Pirutka.Zhang24}).
We will simply recall concrete interpretations of the maps
leaving the remaining details to the reader.

Recall that the set of splittings of \eqref{eq:lifting_sequence}
is given by the group cohomology group $H^1(G,\mathbb{C}^\times)$.
Since the extension is central, this is canonically isomorphic to
$\operatorname{Hom}(G,\mathbb{C}^\times)$.
The first morphism takes the set of
splittings of $G_{\mathcal{O}_X}$ to the corresponding linearization.

The next morphism is the ``forgetful'' morphism which forgets the
$G$-linearization of an invertible sheaf.

Recall that $H^2(G,\mathbb{C}^\times)$ is canonically isomorphic to
the group of extensions of $G$
by $\mathbb{C}^\times$.  Therefore, for a $G$-invariant invertible sheaf
$\mathcal{L}$,
the element $\partial([\mathcal{L}])$ is simply the class of the
extension \eqref{eq:lifting_sequence} defining the lifting group
$G_{\mathcal{L}}$.
\end{proof}

Since $H^2(G,\mathbb{C}^\times)$ is finite and torsion, it is
immediately clear that for every $G$-invariant invertible sheaf
$\mathcal{L}$, there exists some positive integer $d$ such that $d
\partial(\mathcal{L})=0$.

\begin{definition}
Suppose $\mathcal{L}$ is a $G$-invariant invertible sheaf on $X$.
The \emph{Amitsur period of $\mathcal{L}$},
denoted $\AmExp(X,G;\mathcal{L})$, is the order of the element
$\partial(\mathcal{L})$ in the group $H^2(G,\mathbb{C}^\times)$.
We use the shorthand $\AmExp(\mathcal{L})$ when the $G$-variety is
clear.
The \emph{Amitsur period of the $G$-variety $X$},
denoted $\AmExp(X,G)$, is the least common multiple
of $\AmExp(X,G;\mathcal{L})$ over all line bundles
$[\mathcal{L}] \in \operatorname{Pic}(X)^G$.
\end{definition}

An arithmetic version of the following proposition, in the case of
$H^0$, can be found in 
\cite[Proposition 7.1.15(i)]{Colliot-Thelene.Skorobogatov21}.

\begin{proposition} \label{prop:coh_subspace}
If $[\mathcal{L}]$ is in $\operatorname{Pic}(X)^G$ and there exists a
$G_{\mathcal{L}}$-subrepresentation $V$ of $H^i(X,\mathcal{L})$
of dimension $d$, then $d\partial(\mathcal{L})=0$.
In other words, $\AmExp(\mathcal{L})$ divides $d$.
\end{proposition}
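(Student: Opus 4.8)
The plan is to exploit the fact that the lifting group $G_{\mathcal{L}}$ acts linearly on $H^i(X,\mathcal{L})$ with the central $\mathbb{C}^\times$ acting by scalars, as recorded in the proposition preceding this one. Given a $G_{\mathcal{L}}$-subrepresentation $V \subseteq H^i(X,\mathcal{L})$ of dimension $d$, the determinant representation $\det V : G_{\mathcal{L}} \to \mathbb{C}^\times$ is a one-dimensional character. First I would compute its restriction to the central $\mathbb{C}^\times \subseteq G_{\mathcal{L}}$: since this central torus acts on $V$ by the scalar character $t \mapsto t$, the determinant sends $t$ to $t^d$. In other words, $\det V$ restricts to the $d$-th power map on the central $\mathbb{C}^\times$.

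Next I would translate this into the language of extension classes. The class $\partial(\mathcal{L}) \in H^2(G,\mathbb{C}^\times)$ is, by the description in the proof of the Leray sequence, the class of the extension
\[
1 \to \mathbb{C}^\times \to G_{\mathcal{L}} \to G \to 1.
\]
Pushing this extension forward along the $d$-th power map $\mathbb{C}^\times \xrightarrow{(-)^d} \mathbb{C}^\times$ multiplies its class by $d$, so $d\,\partial(\mathcal{L})$ is the class of the pushout extension. The character $\det V : G_{\mathcal{L}} \to \mathbb{C}^\times$, whose restriction to the central $\mathbb{C}^\times$ is exactly this $d$-th power map, furnishes a splitting of the pushout extension: concretely, the pushout group is the quotient of $G_{\mathcal{L}} \times \mathbb{C}^\times$ by the antidiagonal copy of $\mathbb{C}^\times$ (embedded via $t \mapsto (t, t^{-d})$), and the map sending the class of $(h, s)$ to $\det V(h)\cdot s^{?}$ — more cleanly, the homomorphism $G_{\mathcal{L}} \to \mathbb{C}^\times$ given by $\det V$ descends because it already agrees with the $d$-th power on the kernel we are killing — splits the sequence. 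Hence $d\,\partial(\mathcal{L}) = 0$, which is exactly the claim that $\AmExp(\mathcal{L})$ divides $d$.

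I would spell out the pushout splitting argument carefully, since that is where the bookkeeping lives: one must check that $(h,s) \mapsto \det V(h) \cdot s$ is a well-defined homomorphism on the pushout and restricts to the identity on the new central $\mathbb{C}^\times$. Concretely, on $G_{\mathcal{L}} \times \mathbb{C}^\times$ the map $(h,s) \mapsto \det V(h) s$ kills the antidiagonal $\{(t, t^{-d})\}$ precisely because $\det V(t) = t^{d}$ for central $t$, and on the image of $\{1\} \times \mathbb{C}^\times$ it is the identity; so it descends to a splitting of the pushout extension representing $d\,\partial(\mathcal{L})$.

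The main obstacle is keeping the variance conventions straight: the identification $H^2(G,\mathbb{C}^\times) \cong \mathrm{Ext}(G,\mathbb{C}^\times)$, the direction in which the $d$-th power map acts on extension classes (pushforward versus pullback), and whether $g_\ast = (g^\ast)^{-1}$ introduces an inverse that flips a sign. None of these is deep, but a single misplaced inverse would turn "divides $d$" into "divides $d$" only up to a harmless reindexing — so the care is in the signs, not in any genuinely hard step. Everything else reduces to the elementary observation that the determinant of a $d$-dimensional representation restricts to the $d$-th power on scalars.
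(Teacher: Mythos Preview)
Your proposal is correct and is essentially the same argument as the paper's: both take the determinant $\Lambda^d V$ of the $d$-dimensional $G_{\mathcal{L}}$-subrepresentation, observe that it restricts to the $d$-th power map on the central $\mathbb{C}^\times$, and use this character to split the extension representing $d\,\partial(\mathcal{L})$. The only cosmetic difference is that the paper phrases the last step as producing a retract $G_{d\mathcal{L}} \to \mathbb{C}^\times$ (identifying $G_{d\mathcal{L}}$ with the lifting group of $\mathcal{L}^{\otimes d}$), whereas you describe the same map as descending to the pushout along $(-)^d$; these are the same construction.
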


\begin{proof}
Let $G_{d\mathcal{L}}$ be the group in the extension
$d\partial(\mathcal{L})=\partial(\mathcal{L}^{\otimes d})$
in $H^2(G,\mathbb{C}^\times)$.
In other words, we have the following commutative diagram of exact
sequences:
\[
\xymatrix{
1 \ar[r] &
\mathbb{C}^\times \ar[r] \ar[d]^{d} &
G_{\mathcal{L}} \ar[d] \ar[r] &
G \ar[r] \ar@{=}[d] &
1\\
1 \ar[r] &
\mathbb{C}^\times \ar[r]^{\alpha} &
G_{d\mathcal{L}} \ar[r] &
G \ar[r] &
1
}
\]
where the morphism $\mathbb{C}^\times \to \mathbb{C}^\times$
is the power map $x \mapsto x^d$.
We want to show the bottom sequence splits.

We have a $d$-dimensional representation $V$ of $G_\mathcal{L}$
whose restriction to $\mathbb{C}^\times$ corresponds to multiplication by scalar
matrices.
The representation $\Lambda^d V$ is a $1$-dimensional representation
of $G_\mathcal{L}$ whose restriction to $\mathbb{C}^\times$ is
the power map $x \mapsto x^d$.
Therefore, we have a homomorphism
$G_{\mathcal{L}} \to \mathbb{C}^\times$
which factors through $G_{d\mathcal{L}}$ in the diagram above.
The resulting quotient morphism $G_{d\mathcal{L}} \to \mathbb{C}^\times$
is a retract for the morphism
$\alpha : \mathbb{C}^\times \to G_{d\mathcal{L}}$.
The kernel of this retract gives the desired
splitting $G \to G_{d\mathcal{L}}$.
\end{proof}

The following fact, apparently first observed by A.~Kuznetsov,
is the foundation for this paper.
An arithmetic version of this can be found in
\cite[Proposition 7.1.15(ii)]{Colliot-Thelene.Skorobogatov21}.

\begin{proposition} \label{prop:Euler}
Let $\chi(\mathcal{L})$ be the Euler-Poincar\'e characteristic of
$\mathcal{L}$.  We have $\chi(\mathcal{L})\partial(\mathcal{L})=0$. 
In other words, $\AmExp(\mathcal{L})$ divides $\chi(\mathcal{L})$.
\end{proposition}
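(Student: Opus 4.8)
The plan is to adapt the proof of Proposition~\ref{prop:coh_subspace}, replacing the top exterior power of a single subrepresentation by the \emph{determinant of the cohomology} of $\mathcal{L}$. First I would recall, from the discussion preceding Proposition~\ref{prop:coh_subspace}, that the lifting group $G_{\mathcal{L}}$ acts linearly on every cohomology space $H^i(X,\mathcal{L})$, with the central $\mathbb{C}^\times \subseteq G_{\mathcal{L}}$ acting by scalar matrices. Since $X$ is projective, each $H^i(X,\mathcal{L})$ is finite-dimensional and vanishes for $i > \dim X$, so only finitely many are nonzero.

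Next I would form the one-dimensional representation of $G_{\mathcal{L}}$
\[
\lambda(\mathcal{L}) \;:=\; \bigotimes_{i \ge 0} \Bigl(\textstyle\bigwedge^{h^i} H^i(X,\mathcal{L})\Bigr)^{\otimes (-1)^i},
\qquad h^i := \dim_{\mathbb{C}} H^i(X,\mathcal{L}),
\]
where a factor with negative exponent is read as the dual vector space; this is a genuine finite tensor product since $h^i = 0$ for $i > \dim X$. On the factor coming from $H^i$ the central $\mathbb{C}^\times$ acts by $z \mapsto z^{h^i}$, so it acts on $\lambda(\mathcal{L})$ by $z \mapsto z^{m}$ with $m = \sum_i (-1)^i h^i = \chi(\mathcal{L})$. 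Hence the action on $\lambda(\mathcal{L})$ is a homomorphism $\rho : G_{\mathcal{L}} \to \operatorname{GL}(\lambda(\mathcal{L})) = \mathbb{C}^\times$ whose restriction to $\mathbb{C}^\times \subseteq G_{\mathcal{L}}$ is the $\chi(\mathcal{L})$-th power map.

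From here I would repeat the endgame of Proposition~\ref{prop:coh_subspace} essentially verbatim. Writing $d = \chi(\mathcal{L})$, the class $d\,\partial(\mathcal{L}) = \partial(\mathcal{L}^{\otimes d})$ is represented by the extension $G_{d\mathcal{L}}$, which fits into the same commutative diagram of central extensions as in that proof: the identity on $G$ and the $d$-th power map $\mathbb{C}^\times \to \mathbb{C}^\times$ on the left, exhibiting the bottom row as the pushout of the top along $z \mapsto z^d$. Since $\rho$ restricts to the $d$-th power map on $\mathbb{C}^\times$, the pair $(\rho,\mathrm{id}_{\mathbb{C}^\times})$ descends along this pushout to a homomorphism $\bar\rho : G_{d\mathcal{L}} \to \mathbb{C}^\times$ retracting the inclusion $\mathbb{C}^\times \hookrightarrow G_{d\mathcal{L}}$; its kernel is a complement, so $G_{d\mathcal{L}}$ splits and $\chi(\mathcal{L})\,\partial(\mathcal{L}) = 0$. (When $\chi(\mathcal{L}) = 0$ the statement is vacuous; the argument still goes through, with $\rho$ trivial on $\mathbb{C}^\times$.)

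I do not expect a serious obstacle here, since Proposition~\ref{prop:coh_subspace} supplies the template. The one thing to be careful about is sign bookkeeping: making sense of $\lambda(\mathcal{L})$ and of $\mathcal{L}^{\otimes d}$ when $\chi(\mathcal{L})$ is negative (handled by the duals above), and noting that the pushout description of $G_{d\mathcal{L}}$ remains valid for an arbitrary integer $d$ by functoriality of central extensions along the endomorphism $z \mapsto z^d$ of $\mathbb{C}^\times$. Everything else is a routine reprise of the earlier argument.
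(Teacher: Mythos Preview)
Your proposal is correct, but the paper takes a shorter route. Rather than assembling the determinant-of-cohomology line $\lambda(\mathcal{L})$ and rerunning the pushout/splitting argument from scratch, the paper simply applies Proposition~\ref{prop:coh_subspace} to each full cohomology space $V = H^i(X,\mathcal{L})$, obtaining $h^i\,\partial(\mathcal{L}) = 0$ for every $i$; since $\chi(\mathcal{L}) = \sum_i (-1)^i h^i$ is an integer linear combination of the $h^i$, the relation $\chi(\mathcal{L})\,\partial(\mathcal{L}) = 0$ follows immediately in the abelian group $H^2(G,\mathbb{C}^\times)$. Your argument packages the same content into a single explicit character of $G_{\mathcal{L}}$, which is pleasant and yields a concrete retraction, but it costs you the sign-and-dual bookkeeping that the paper's two-line deduction avoids entirely.
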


\begin{proof}
In view of Proposition~\ref{prop:coh_subspace}, we have
$\left(\dim H^i(X,\mathcal{L})\right)\partial(\mathcal{L})=0$
for every $i$.  Since $\chi(\mathcal{L})$ is
an alternating sum of $\dim H^i(X,\mathcal{L})$, the result follows
immediately.
\end{proof}

From \cite[Proposition~2.11]{Blanc.Cheltsov.23}, we have the following.

\begin{proposition}
The canonical line bundle has a canonical linearization.
\end{proposition}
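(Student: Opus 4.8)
The plan is to exhibit the canonical linearization by hand, using the sheaf of Kähler differentials, rather than invoking any abstract splitting criterion. Since $X$ is smooth of dimension $n$, the canonical sheaf is $\omega_X = \Lambda^n\Omega^1_X$, where $\Omega^1_X$ is the cotangent sheaf. I will first equip $\Omega^1_X$ with a canonical $G$-linearization and then transport it to $\omega_X$ by taking exterior powers.

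First I would recall the functoriality of Kähler differentials: any morphism $f\colon X\to Y$ of smooth varieties induces a canonical $\mathcal{O}_X$-linear map $f^\ast\Omega^1_Y\to\Omega^1_X$, which is an isomorphism when $f$ is \'etale, in particular when $f$ is an automorphism. Applying this to each $g\in G$, viewed as an automorphism $g\colon X\to X$, produces canonical isomorphisms $\psi_g\colon g^\ast\Omega^1_X\xrightarrow{\ \sim\ }\Omega^1_X$ of locally free sheaves, with $\psi_{\mathrm{id}}=\mathrm{id}$. The chain rule, read at the level of sheaves, gives the relation $\psi_{gh}=\psi_h\circ h^\ast(\psi_g)$ for all $g,h\in G$; taking inverses, the system $\{\psi_g^{-1}\colon\Omega^1_X\to g^\ast\Omega^1_X\}$ satisfies the cocycle identity defining a $G$-linearization. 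Hence $\Omega^1_X$ carries a canonical $G$-linearization, and so does every functorial construction out of it.

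Next I would take $n$-th exterior powers. Since exterior powers commute with pullback, the maps $\lambda_g := \Lambda^n\psi_g^{-1}\colon \omega_X\xrightarrow{\ \sim\ }g^\ast\omega_X$ inherit the cocycle identity $\lambda_{gh}=h^\ast(\lambda_g)\circ\lambda_h$, with $\lambda_{\mathrm{id}}=\mathrm{id}$. Under the dictionary between such compatible systems of sheaf isomorphisms and isomorphisms of the total space lifting the automorphisms $g$ as in \eqref{eq:bundle_morphism}, this data is exactly a $G$-action on the line bundle associated to $\omega_X$ splitting the lifting sequence \eqref{eq:lifting_sequence}; that is, a $G$-linearization. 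In particular $\partial([\omega_X])=0$ and $\AmExp(\omega_X)=1$. Since the construction involves no choices at any stage, the resulting linearization is canonical.

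The only genuinely delicate point is bookkeeping with the variance conventions — whether one uses $g^\ast$ or $(g^{-1})^\ast$, left versus right $G$-actions, and whether $gh$ means ``first $g$'' or ``first $h$'' — so that $g\mapsto\lambda_g$ really defines a splitting homomorphism $G\to G_{\omega_X}$ and not an anti-homomorphism, and so that the inversion and the exterior power are applied in the compatible order. Once this is carried out consistently with the conventions fixed in Section~\ref{sec:prelims}, no further input is needed beyond the functoriality of $\Omega^1_X$.
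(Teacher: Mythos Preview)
Your argument is correct: the functoriality of K\"ahler differentials furnishes a canonical $G$-linearization on $\Omega^1_X$, and passing to the top exterior power transports it to $\omega_X$. The cocycle verification $\psi_{gh}=\psi_h\circ h^\ast(\psi_g)$ and its inversion are handled correctly, and your caveat about variance bookkeeping is the only point requiring care.

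By way of comparison, the paper does not actually prove this statement in-house; it simply cites \cite[Proposition~2.11]{Blanc.Cheltsov.23}. Your approach is the standard one and is almost certainly what lies behind that citation, so in effect you have supplied the omitted proof rather than found an alternative route.
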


\subsection{Amitsur subgroup}

Our main interest is the \emph{Amitsur subgroup}
\begin{align*}
\operatorname{Am}(X,G) := &\operatorname{im}\left( \partial
: \operatorname{Pic}(X)^G \to H^2(G,\mathbb{C}^\times)
\right) \\
\cong &\operatorname{coker}\left(
 \operatorname{Pic}(X,G) \to \operatorname{Pic}(X)^G
\right)
\end{align*}
first named in \cite{Blanc.Cheltsov.23}.
This is an equivariant analog of the arithmetic version first named in
\cite{Liedtke17}, although its study goes back decades.

In this paper, we are less interested in the specific embedding into
$H^2(G,\mathbb{C}^\times)$ and are more interested in its description as
a quotient of $\operatorname{Pic}(X)^G$.
Therefore, we will more frequently refer to it as the
\emph{Amitsur group} rather than the Amitsur subgroup.

The arithmetic interest is related to the fact that it a birational
invariant.  This is true in the equivariant context as well
by \cite[Theorem A.1]{Blanc.Cheltsov.23}:

\begin{theorem}
If $G$ acts faithfully on $X$, then
$\operatorname{Am}(X,G)$ is a $G$-equivariant birational invariant of
smooth projective $G$-varieties.
\end{theorem}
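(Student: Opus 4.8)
The plan is to use $G$-equivariant weak factorization to reduce to the case of a single blow-up, and then to compute directly how the Amitsur group changes under such a blow-up.

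First I would set up the reduction. Let $X$ and $Y$ be smooth projective $G$-varieties that are $G$-equivariantly birational, with $G$ acting faithfully. By the $G$-equivariant weak factorization theorem of Abramovich–Karu–Matsuki–W{\l}odarczyk, there is a chain of $G$-equivariant birational maps $X = X_0 \dashrightarrow X_1 \dashrightarrow \cdots \dashrightarrow X_m = Y$ in which every $X_i$ is a smooth projective $G$-variety and, for each $i$, either $X_i \dashrightarrow X_{i+1}$ or $X_{i+1} \dashrightarrow X_i$ is the blow-up of a smooth $G$-invariant closed subvariety. Hence it suffices to prove the following: if $\pi \colon \widetilde{X} = \operatorname{Bl}_W X \to X$ is the blow-up of a smooth projective $G$-variety $X$ along a smooth $G$-invariant center $W$, then $\operatorname{Am}(\widetilde{X},G) = \operatorname{Am}(X,G)$ as subgroups of $H^2(G,\mathbb{C}^\times)$.

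Next I would carry out the blow-up computation. Because $W$ is $G$-invariant, the exceptional divisor $E \subset \widetilde{X}$ is $G$-invariant, and the standard identification $\operatorname{Pic}(\widetilde{X}) = \pi^\ast \operatorname{Pic}(X) \oplus \mathbb{Z}\,[E]$ is an isomorphism of $G$-modules, with $G$ acting trivially on the summand $\mathbb{Z}\,[E]$ and through $\operatorname{Pic}(X)$ on the other; taking $G$-invariants gives $\operatorname{Pic}(\widetilde{X})^G = \pi^\ast\operatorname{Pic}(X)^G \oplus \mathbb{Z}\,[E]$. Now apply $\partial$. On the first summand, contravariant functoriality of the sequence~\eqref{eq:Leray} in $X$ gives $\partial_{\widetilde{X}}(\pi^\ast \mathcal{L}) = \partial_X(\mathcal{L})$ for every $\mathcal{L} \in \operatorname{Pic}(X)^G$, so $\partial_{\widetilde{X}}(\pi^\ast\operatorname{Pic}(X)^G) = \operatorname{Am}(X,G)$. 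On the second summand, $\mathcal{O}_{\widetilde{X}}(E)$ carries a canonical $G$-linearization: realizing it as the subsheaf of the sheaf of rational functions whose sections have poles bounded by the $G$-invariant divisor $E$ makes it $G$-stable, this being the same mechanism that linearizes the canonical bundle (cf.~\cite[Proposition~2.11]{Blanc.Cheltsov.23}). Hence $[\mathcal{O}_{\widetilde{X}}(E)] \in \PicForget{\widetilde{X}}{G}$, so $\partial_{\widetilde{X}}(n[E]) = 0$ for all $n$. Combining, $\operatorname{Am}(\widetilde{X},G) = \partial_{\widetilde{X}}(\operatorname{Pic}(\widetilde{X})^G) = \operatorname{Am}(X,G)$, which finishes the reduction and hence the proof.

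The main obstacle is the input I take for granted in the first step: the existence of a $G$-equivariant weak factorization in which all intermediate varieties remain smooth projective $G$-varieties and all centers are smooth and $G$-invariant. That is the genuinely deep ingredient; the blow-up computation itself is elementary. An alternative that sidesteps weak factorization is to resolve the indeterminacy $G$-equivariantly, producing $p \colon Z \to X$ and $q \colon Z \to Y$, both $G$-equivariant birational morphisms of smooth projective $G$-varieties, and then to argue directly that $\operatorname{Am}(Z,G) = \operatorname{Am}(X,G)$ for any such $p$: one inclusion is functoriality of~\eqref{eq:Leray}, and for the other one writes a $G$-invariant class on $Z$ as $p^\ast\mathcal{L}$ plus a combination of exceptional prime divisors, where $\mathcal{L} = (p_\ast\mathcal{M})^{\vee\vee}$ is the automatically $G$-equivariant reflexive hull on the smooth variety $X$. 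The subtlety there is that $G$ merely permutes the exceptional prime divisors rather than fixing them, so one must verify that the residual exceptional combination is $G$-invariant as a divisor (equivalently, has $G$-linearizable associated line bundle), which needs the exceptional classes to be independent modulo $p^\ast\operatorname{Pic}(X)$; I would prefer the weak-factorization route precisely to avoid this bookkeeping.
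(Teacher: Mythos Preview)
The paper does not prove this statement itself; it simply records it as a consequence of \cite[Theorem~A.1]{Blanc.Cheltsov.23}. Your weak-factorization argument is a valid and essentially correct route. One small imprecision: in the equivariant weak factorization the smooth $G$-invariant center $W$ need not be connected, so the exceptional divisor may have several irreducible components $E_1,\ldots,E_k$ permuted by $G$, and the decomposition reads $\operatorname{Pic}(\widetilde X)=\pi^\ast\operatorname{Pic}(X)\oplus\bigoplus_i\mathbb{Z}[E_i]$ rather than with a single $\mathbb{Z}[E]$. This does not harm the argument: the $G$-invariants of the exceptional summand are spanned by orbit sums $\sum_{i\in O}[E_i]$, each of which is the class of a $G$-invariant effective Cartier \emph{divisor} (not merely a $G$-invariant class), so the associated line bundle carries the canonical linearization you describe. (Incidentally, the canonical linearization of $K_X$ comes from the natural $G$-action on top differential forms rather than from a distinguished $G$-invariant divisor, so your parenthetical citation is slightly off; but your argument for $\mathcal{O}_{\widetilde X}(E)$ is correct on its own terms.)

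Your alternative route in fact avoids the deep input of weak factorization, and the bookkeeping you worry about is harmless. For any birational morphism $p\colon Z\to X$ of smooth projective varieties the exceptional prime divisors $E_i$ are linearly independent in $\operatorname{Pic}(Z)$: if $\sum a_iE_i=\operatorname{div}(f)$ then $f$ is regular and nonvanishing on $Z\setminus\operatorname{Exc}(p)\cong X\setminus p(\operatorname{Exc}(p))$, hence constant since $p(\operatorname{Exc}(p))$ has codimension $\ge 2$ in the smooth variety $X$. Together with the $G$-equivariant splitting $p^\ast$ this gives $\operatorname{Pic}(Z)=p^\ast\operatorname{Pic}(X)\oplus\bigoplus_i\mathbb{Z}[E_i]$ as $G$-modules, and the same orbit-sum argument finishes. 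This is more elementary than invoking equivariant AKMW.
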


The following Theorem is due to Dolgachev~\cite{Dol99Invariant}:

\begin{theorem} \label{thm:Dolgachev}
If $X$ is a curve and $G$ acts faithfully, then
$\operatorname{Am}(X,G)=H^2(G,\mathbb{C}^\times)$.
\end{theorem}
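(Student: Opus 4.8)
The plan is to reduce to the case of an ample line bundle and then apply Proposition~\ref{prop:Euler} together with an explicit choice of divisor achieving $\chi = 1$. First I would observe that, since $X$ is a curve, $H^2(G,\mathbb{C}^\times) = \operatorname{Am}(X,G)$ is the claim that \emph{every} class in $H^2(G,\mathbb{C}^\times)$ is realized by $\partial(\mathcal{L})$ for some $G$-invariant $\mathcal{L}$. The key numerical input is that on a smooth projective curve $C$ of genus $g$, Riemann--Roch gives $\chi(\mathcal{L}) = \deg \mathcal{L} + 1 - g$; in particular there exist line bundles of every degree, so $\chi$ takes every integer value on $\operatorname{Pic}(C)$, and in particular the value $1$ (take $\deg = g$). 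Proposition~\ref{prop:Euler} then says $\AmExp(\mathcal{L})$ divides $1$ for such $\mathcal{L}$, which is useless — so the right approach is the opposite: I want to \emph{produce} a $G$-invariant line bundle whose boundary class is an arbitrary prescribed element of $H^2(G,\mathbb{C}^\times)$, and use the Euler characteristic to control the order only indirectly.

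The cleaner route is via the exact sequence \eqref{eq:Leray} and a universality argument. Given a class $e \in H^2(G,\mathbb{C}^\times)$, represented by a central extension $1 \to \mathbb{C}^\times \to \widetilde{G} \to G \to 1$, one chooses a faithful representation: let $V$ be a finite-dimensional representation of $\widetilde{G}$ on which the central $\mathbb{C}^\times$ acts by scalars, realizing $\widetilde{G} \to \operatorname{GL}(V)$ with image mapping to a subgroup of $\operatorname{PGL}(V)$ isomorphic to $G$. This gives an action of $G$ on $\mathbb{P}(V) = \mathbb{P}^n$ with lifting group of $\mathcal{O}(1)$ equal to $\widetilde{G}$, hence $\partial(\mathcal{O}(1)) = e$. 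The problem is that $X$ is a \emph{fixed} curve with a \emph{fixed} faithful $G$-action, so I cannot simply choose $X = \mathbb{P}^n$. Instead I would use the birational invariance (the theorem just before this one) is \emph{not} quite enough either, since changing $X$ changes nothing when $G$ is fixed but the curve is fixed. So the genuine argument must be intrinsic to the given curve.

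The intrinsic argument I would pursue: since $G$ acts faithfully on the curve $X$, pick any $G$-invariant ample divisor class, e.g. a positive multiple of the canonical class $K_X$ when $g \ge 2$ (handling $g = 0, 1$ separately, where $\operatorname{Aut}$ is better understood and $G$ embeds in $\operatorname{PGL}_2$ or is an extension by the elliptic curve's translations). For $g \ge 2$, consider $\mathcal{L} = \mathcal{O}(mK_X)$ for varying $m$; each is $G$-invariant, and the lifting groups $G_{\mathcal{O}(mK_X)}$ satisfy $\partial(\mathcal{O}(mK_X)) = m\,\partial(\mathcal{O}(K_X))$. But $K_X$ has a \emph{canonical} linearization by the Proposition above, so $\partial(K_X) = 0$ — these all vanish. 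Therefore the canonical class is the wrong source; I need line bundles with nontrivial boundary. Here is where the main obstacle lies: I must show the map $\operatorname{Pic}(X)^G \to H^2(G,\mathbb{C}^\times)$ is surjective, and the honest proof (Dolgachev's) goes through the structure of $\operatorname{Pic}^0(X)$ and the fact that $H^1(X,\mathcal{O}_X)$ as a $G$-module, combined with the theta-group/Heisenberg-type constructions on the Jacobian, produces all Schur-multiplier classes; equivalently one uses that $H^2(G,\mathbb{C}^\times) = H^2(G,\mathbb{C}^\times)$ is generated by classes pulled back from faithful projective representations and each such representation can be matched to a quotient of $X$ mapping $G$-equivariantly to the corresponding $\mathbb{P}^n$ via a suitable very ample $G$-linearized-up-to-scalar line bundle of large degree. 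I expect the hard part to be exactly this surjectivity: constructing, for each projective representation $G \hookrightarrow \operatorname{PGL}_{n+1}$, a $G$-equivariant morphism $X \to \mathbb{P}^n$ (necessarily by a high-degree line bundle, so the associated boundary is the pullback of the tautological class $e_n \in H^2(\operatorname{PGL}_{n+1}, \mathbb{C}^\times)$), together with the representation-theoretic fact that these tautological classes generate $H^2(G, \mathbb{C}^\times)$ as $G$ ranges over all its projective representations — which is a standard consequence of the universal coefficient description of the Schur multiplier. Since the paper attributes this to Dolgachev's \cite{Dol99Invariant}, I would ultimately cite that source for the surjectivity and present only the reduction above, noting that faithfulness of the $G$-action on the curve is what guarantees enough very ample $G$-invariant line bundles of every sufficiently large degree to realize every projective representation.
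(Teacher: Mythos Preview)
The paper does not give a proof of this theorem at all; it simply attributes the result to Dolgachev and cites \cite{Dol99Invariant}. Your proposal, after working through several approaches that you yourself recognize as dead ends, ultimately concludes the same way: you would ``cite that source for the surjectivity.'' In that narrow sense you end up matching the paper.

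That said, the sketch you offer before deferring to the citation has a genuine gap. Your proposed mechanism is: for each projective representation $G \hookrightarrow \operatorname{PGL}_{n+1}(\mathbb{C})$, construct a $G$-equivariant morphism $X \to \mathbb{P}^n$ so that the pullback of $\mathcal{O}(1)$ has the desired boundary class. But a $G$-equivariant morphism $X \to \mathbb{P}^n$ arises precisely from a $G$-invariant line bundle $\mathcal{L}$ on $X$ together with an identification of (a quotient of) $H^0(X,\mathcal{L})$ with the given projective representation, and the boundary class of the pullback of $\mathcal{O}(1)$ is then exactly $\partial(\mathcal{L})$. So the step ``construct such a morphism with prescribed lifting class $e$'' is logically equivalent to ``find $\mathcal{L} \in \operatorname{Pic}(X)^G$ with $\partial(\mathcal{L}) = e$,'' which is the surjectivity statement you are trying to prove. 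Nothing in your outline breaks this circularity; in particular, ``faithfulness guarantees enough very ample $G$-invariant line bundles of every sufficiently large degree'' gives you many $G$-invariant line bundles, but says nothing about which classes in $H^2(G,\mathbb{C}^\times)$ their boundaries hit.

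The honest input that makes the theorem work for curves (and not for higher-dimensional varieties in general) is a cohomological vanishing specific to dimension one --- for instance, the vanishing of the Brauer group $\operatorname{Br}(X)=H^2(X,\mathbb{G}_m)$ for a smooth projective curve over $\mathbb{C}$, which via the Hochschild--Serre/Leray spectral sequence forces the transgression $\partial$ to be surjective. Your proposal never invokes anything of this kind, so even as a sketch it does not isolate the actual reason the result holds.
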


Suppose $G$ and $H$ are finite groups
and $\varphi : G \to H$ is a group homomorphism.
Suppose $X$ is a $G$-variety, $Y$ is an $H$-variety
and $f : X \to Y$ is a $\varphi$-equivariant morphism.
In other words, $f(g\cdot x)=\varphi(g)f(x)$ for all $g \in G$.
Then there is an induced map
\[
f^\ast : \operatorname{Am}(Y,H)
\to \operatorname{Am}(X,G)
\]
using the functoriality of the exact sequence
\eqref{eq:Leray}.
Indeed, $\operatorname{Am}(-,-)$ is a contravariant
functor from the category of varieties with a finite group action to the
category of abelian groups.

\begin{proposition}
If $f : X \to Y$ is a $G$-equivariant morphism,
then the induced morphism
$f^\ast : \operatorname{Am}(Y,G) \to \operatorname{Am}(X,G)$
is injective.  If $f$ has a $G$-equivariant section, then
$\operatorname{Am}(Y,G)$ is a direct summand.
\end{proposition}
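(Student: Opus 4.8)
The plan is to produce everything from the functoriality of the exact sequence~\eqref{eq:Leray}. Given the $G$-equivariant morphism $f : X \to Y$, pullback of line bundles gives a $G$-equivariant homomorphism $f^\ast : \operatorname{Pic}(Y) \to \operatorname{Pic}(X)$, hence $f^\ast : \operatorname{Pic}(Y)^G \to \operatorname{Pic}(X)^G$, and this fits into a commutative ladder between the two copies of~\eqref{eq:Leray} (for $(Y,G)$ on top and $(X,G)$ on the bottom), in which the maps on the $\operatorname{Hom}(G,\mathbb{C}^\times)$ and $H^2(G,\mathbb{C}^\times)$ columns are the \emph{identity} — this is the key point, since $f^\ast$ does not change $G$, only $X$. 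Chasing a line bundle $\mathcal{M}$ on $Y$ through the diagram shows $\partial_X(f^\ast\mathcal{M}) = \partial_Y(\mathcal{M})$ in $H^2(G,\mathbb{C}^\times)$; concretely, the lifting group $G_{f^\ast\mathcal{M}}$ is canonically isomorphic to $G_{\mathcal{M}}$ as an extension of $G$ by $\mathbb{C}^\times$, because a lift of $g$ acting on $\mathcal{M}$ pulls back to a lift of $g$ acting on $f^\ast\mathcal{M}$, compatibly with scalars. Therefore $f^\ast$ carries $\operatorname{Am}(Y,G) = \operatorname{im}(\partial_Y)$ onto $\operatorname{im}(\partial_X \circ f^\ast) \subseteq \operatorname{Am}(X,G)$, and the induced map $f^\ast : \operatorname{Am}(Y,G) \to \operatorname{Am}(X,G)$ is simply the inclusion of one subgroup of $H^2(G,\mathbb{C}^\times)$ into another — in particular it is injective. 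That already settles the first assertion; there is essentially nothing to prove once the diagram is set up correctly.

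For the second assertion, suppose $s : Y \to X$ is a $G$-equivariant section, so $f \circ s = \operatorname{id}_Y$. Then $s^\ast \circ f^\ast = (f\circ s)^\ast = \operatorname{id}$ on $\operatorname{Pic}(Y)$, hence on $\operatorname{Pic}(Y)^G$, and by functoriality the composite $\operatorname{Am}(Y,G) \xrightarrow{f^\ast} \operatorname{Am}(X,G) \xrightarrow{s^\ast} \operatorname{Am}(Y,G)$ is the identity. Thus $s^\ast$ is a retraction of $f^\ast$, which exhibits $\operatorname{Am}(Y,G)$ as a direct summand of $\operatorname{Am}(X,G)$: we get $\operatorname{Am}(X,G) = f^\ast\operatorname{Am}(Y,G) \oplus \ker(s^\ast)$.

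The only point requiring genuine care — and the main obstacle, such as it is — is the compatibility claim $\partial_X \circ f^\ast = \partial_Y$, i.e.\ that the functoriality of~\eqref{eq:Leray} in the variable $X$ really induces the identity on $H^2(G,\mathbb{C}^\times)$. This is immediate from the spectral-sequence description but deserves a one-line concrete justification: the extension~\eqref{eq:lifting_sequence} for $f^\ast\mathcal{M}$ receives a canonical map from the extension for $\mathcal{M}$ (pull back the bundle automorphism along the Cartesian square defining $f$), and since both extensions have the same kernel $\mathbb{C}^\times$ and quotient $G$ with the map being the identity on each, they represent the same class in $H^2(G,\mathbb{C}^\times)$. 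Once that is recorded, the rest is a formal diagram chase and everything else in the excerpt may be invoked as stated.
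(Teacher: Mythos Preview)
Your argument is correct. The paper itself does not give a proof at all: it simply cites \cite[Lemma~A.6]{Blanc.Cheltsov.23} and remarks that the faithfulness hypothesis there is unnecessary. What you have written is precisely the standard argument underlying that citation --- both Amitsur groups sit inside the same $H^2(G,\mathbb{C}^\times)$ and the functorial map $\partial_X\circ f^\ast=\partial_Y$ makes $f^\ast$ literally an inclusion of subgroups --- so your proof is consistent with, and more explicit than, the paper's treatment.
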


\begin{proof}
See \cite[Lemma A.6]{Blanc.Cheltsov.23},
where faithfulness is assumed in the statement, but not needed in the proof.
\end{proof}

Even restriction to a subgroup $H \subset G$ can be a subtle operation.
Indeed, in Remark~\ref{rem:MackeyFunctor} below,
we see that it is possible for $\operatorname{Am}(X,G)$ to be trivial,
while $\operatorname{Am}(X,H)$ is nontrivial.  However, we have the
following. 
Recall that $\PicIm(X,G)$ is the image of
the $G$-action on $\operatorname{Pic}(X)$.

\begin{proposition} \label{prop:PicIm}
Suppose that $\varphi : H \to G$ is a group homomorphism and
$\PicIm(X,G)=\PicIm(X,H)$.
Then the induced homomorphism $\operatorname{Am}(X,G) \to \operatorname{Am}(X,H)$
is surjective.
\end{proposition}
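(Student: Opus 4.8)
The plan is to realize $\operatorname{Am}(X,G)$ and $\operatorname{Am}(X,H)$ as quotients of the \emph{same} group and check that the natural map on quotients is surjective. The key point is that $\operatorname{Am}(X,G)$ depends only on the group structure of the extension $G_{\mathcal{L}}$, which in turn is controlled by how $G$ acts on $\operatorname{Pic}(X)$; the hypothesis $\PicIm(X,G)=\PicIm(X,H)=:P$ says both actions factor through (surjections onto) the same finite group $P$ of automorphisms of $\operatorname{Pic}(X)$. First I would record that $\operatorname{Pic}(X)^G=\operatorname{Pic}(X)^P=\operatorname{Pic}(X)^H$, so the source of $\partial$ is literally the same abelian group $M:=\operatorname{Pic}(X)^P$ in both cases. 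Thus $\operatorname{Am}(X,G)=M/\ker(\partial_G)$ and $\operatorname{Am}(X,H)=M/\ker(\partial_H)$ as quotients of $M$, and it suffices to prove the inclusion $\ker(\partial_G)\subseteq\ker(\partial_H)$ of subgroups of $M$: then $M/\ker(\partial_G)\twoheadrightarrow M/\ker(\partial_H)$, and one checks this quotient map agrees with the functorial map $\operatorname{Am}(X,G)\to\operatorname{Am}(X,H)$ induced by $\varphi$ via the functoriality of \eqref{eq:Leray}.

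For the inclusion $\ker(\partial_G)\subseteq\ker(\partial_H)$, I would argue as follows. Suppose $[\mathcal{L}]\in M$ lies in $\ker(\partial_G)$, i.e.\ $\mathcal{L}$ is $G$-linearizable; choose a $G$-linearization, equivalently a $G$-action on the total space $L$ lifting the $G$-action on $X$ and acting by scalars fiberwise. I want to produce an $H$-linearization. Both the $G$-action and the $H$-action on $X$ factor through the common image $P$ in $\operatorname{Aut}(X)$ — here I should be a little careful: $\PicIm(X,G)=\PicIm(X,H)$ is an equality of images in $\operatorname{Aut}(\operatorname{Pic}(X))$, not in $\operatorname{Aut}(X)$, so a priori the images of $G$ and $H$ in $\operatorname{Aut}(X)$ could differ. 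The clean way around this is to stay on the level of Picard groups and extensions: the lifting extension $1\to\mathbb{C}^\times\to G_{\mathcal{L}}\to G\to 1$ is, by the concrete description of $\partial$, the pullback along $G\twoheadrightarrow \PicIm(X,G)$ of a canonical extension $1\to\mathbb{C}^\times\to P_{\mathcal{L}}\to P\to 1$ associated to $[\mathcal{L}]$ and the $P$-action on $\operatorname{Pic}(X)$ — this extension is exactly the Kummer/Leray obstruction and is independent of whether we present $P$ as coming from $G$ or from $H$. Granting this, $\partial_G([\mathcal{L}])$ is the pullback of $[P_{\mathcal{L}}]\in H^2(P,\mathbb{C}^\times)$ along $G\to P$ and $\partial_H([\mathcal{L}])$ is its pullback along $H\to P$; so if the former vanishes it does not immediately force the latter to vanish — \emph{unless} I use surjectivity.

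Here is where surjectivity of $\varphi$ (after composing, we may assume $H\to G$, hence $H\to P$, is onto) enters decisively, and this is the main obstacle: I need that a class in $H^2(G,\mathbb{C}^\times)$ which is the inflation of a class $c\in H^2(P,\mathbb{C}^\times)$ along the surjection $G\twoheadrightarrow P$, if it equals zero, forces $c$ itself to be zero — this is immediate since inflation along a surjection need not be injective in general, so instead I should work directly with $P$. The correct formulation is: the composite $\operatorname{Pic}(X)^P \xrightarrow{\partial_P} H^2(P,\mathbb{C}^\times)$ is a well-defined map whose kernel is $\operatorname{Pic}^\chi$-style ``$P$-linearizable classes,'' inflation to $G$ sends $\partial_P$ to $\partial_G$ and inflation to $H$ sends $\partial_P$ to $\partial_H$, and for \emph{any} extension $1\to\mathbb{C}^\times\to E\to G\to 1$ that is pulled back from $P$ along a surjection, $E$ splits over $G$ if and only if the $P$-extension splits over $P$ — because a $G$-splitting $s:G\to E$ descends: its image, together with the kernel of $G\to P$ (which maps trivially into $P_{\mathcal{L}}$ by centrality, here one uses that the relevant $2$-cocycle is inflated), gives a $P$-splitting. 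This descent argument is the technical heart. Once it is in place, $\partial_G([\mathcal{L}])=0 \Rightarrow \partial_P([\mathcal{L}])=0 \Rightarrow \partial_H([\mathcal{L}])=0$, giving $\ker(\partial_G)\subseteq\ker(\partial_H)$; combined with the first paragraph this yields the surjection $\operatorname{Am}(X,G)\twoheadrightarrow\operatorname{Am}(X,H)$, and I would close by invoking functoriality of \eqref{eq:Leray} to identify this with the map induced by $\varphi$.
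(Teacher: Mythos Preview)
Your first paragraph is correct and is exactly the paper's approach: since $\operatorname{Pic}(X)^G=\operatorname{Pic}(X)^H$, both Amitsur groups are quotients of the same group $M$, and it suffices to prove $\ker(\partial_G)\subseteq\ker(\partial_H)$.

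The problem is that you then massively overcomplicate this inclusion, and the detour through $P=\PicIm(X,G)$ is both unnecessary and flawed. The inclusion is a one-line consequence of functoriality: the $H$-action on $X$ is \emph{by definition} the composite of $\varphi$ with the $G$-action, so a $G$-linearization of $\mathcal{L}$ pulled back along $\varphi$ is an $H$-linearization. Equivalently, functoriality of \eqref{eq:Leray} gives $\partial_H=\varphi^\ast\circ\partial_G$ on $\operatorname{Pic}(X)^G$, so $\partial_G([\mathcal{L}])=0$ immediately forces $\partial_H([\mathcal{L}])=0$. Note this step does not use the hypothesis $\PicIm(X,G)=\PicIm(X,H)$ at all; that hypothesis is only needed for the equality $\operatorname{Pic}(X)^G=\operatorname{Pic}(X)^H$.

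Your attempted route through $P$ has two genuine gaps. First, $P$ is a subgroup of $\operatorname{Aut}(\operatorname{Pic}(X))$, not of $\operatorname{Aut}(X)$, so there is no canonical ``$\partial_P:\operatorname{Pic}(X)^P\to H^2(P,\mathbb{C}^\times)$'' through which $\partial_G$ and $\partial_H$ both factor; the lifting extension depends on the action on $X$, not merely on $\operatorname{Pic}(X)$. Second, even granting such a factorization, your ``descent'' step---that if an extension inflated along a surjection $G\twoheadrightarrow P$ splits over $G$ then it already splits over $P$---is false in general: inflation $H^2(P,\mathbb{C}^\times)\to H^2(G,\mathbb{C}^\times)$ along a surjection need not be injective (its kernel is governed by the transgression in the Lyndon--Hochschild--Serre sequence). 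So the chain $\partial_G([\mathcal{L}])=0\Rightarrow\partial_P([\mathcal{L}])=0$ cannot be justified this way. Drop the entire $P$-argument and use the direct functoriality observation above; that is precisely what the paper does.
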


\begin{proof}
This follows by the description of $\operatorname{Am}(X,G)$ as the
cokernel of the map
\[
\operatorname{Pic}(X,G) \to \operatorname{Pic}(X)^G.
\]
By our hypothesis, $\operatorname{Pic}(X)^G=\operatorname{Pic}(X)^H$.
A line bundle is $H$-linearizable if it is $G$-linearizable,
so the image of $\operatorname{Pic}(X,G)$ is contained in the image
of $\operatorname{Pic}(X,H)$.
\end{proof}

\begin{proposition}
If $\varphi : H \to G$ is a group homomorphism then the kernel of
the homomorphism $\operatorname{Am}(X,G) \to \operatorname{Am}(X,H)$
is the set of classes $[\mathcal{L}]$ where $\mathcal{L}$ is a
$G$-invariant line bundle on $X$ such that $\varphi$ factors through
$G_{\mathcal{L}} \to G$.
\end{proposition}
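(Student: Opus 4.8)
The plan is to work throughout with the description of the Amitsur group as a cokernel. Write $\PicForget{X}{G}\subseteq\operatorname{Pic}(X)^G$ for the subgroup of $G$-linearizable classes, so that $\operatorname{Am}(X,G)=\operatorname{Pic}(X)^G/\PicForget{X}{G}$, and similarly for $H$, with $X$ regarded as an $H$-variety through $\varphi$. First I would verify that the map $\operatorname{Am}(X,G)\to\operatorname{Am}(X,H)$ produced by the functoriality of \eqref{eq:Leray} is the one induced by the inclusion $\operatorname{Pic}(X)^G\hookrightarrow\operatorname{Pic}(X)^H$. That inclusion is legitimate because $\PicIm(X,H)\subseteq\PicIm(X,G)$, and it descends to the quotients because restricting a $G$-linearization along $\varphi$ yields an $H$-linearization, so $\PicForget{X}{G}\subseteq\PicForget{X}{H}$. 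Hence a class $[\mathcal{L}]\in\operatorname{Pic}(X)^G$ dies in $\operatorname{Am}(X,H)$ precisely when $\mathcal{L}$ is $H$-linearizable, i.e.\ $[\mathcal{L}]\in\PicForget{X}{H}$.

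Next I would translate ``$H$-linearizable'' into the factorization condition. The main point is that the lifting group $H_{\mathcal{L}}$ of $\mathcal{L}$ for the $H$-action is canonically the fiber product $H\times_G G_{\mathcal{L}}$: an isomorphism of line bundles lifting the action of $h\in H$ on $X$ is the same thing as one lifting the action of $\varphi(h)\in G$, so the elements of $H_{\mathcal{L}}$ lying over $h$ are exactly the elements of $G_{\mathcal{L}}$ lying over $\varphi(h)$. Equivalently, the central extension \eqref{eq:lifting_sequence} for $H$ is the pullback along $\varphi$ of \eqref{eq:lifting_sequence} for $G$, which also follows from the functoriality in the group argument of \eqref{eq:Leray}. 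Now by the universal property of the fiber product, a section $H\to H_{\mathcal{L}}$ of the surjection $H_{\mathcal{L}}\to H$ is the same datum as a homomorphism $\psi:H\to G_{\mathcal{L}}$ whose composite with $G_{\mathcal{L}}\to G$ is $\varphi$; that is, $\mathcal{L}$ is $H$-linearizable if and only if $\varphi$ factors through $G_{\mathcal{L}}\to G$.

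Putting the two steps together, the preimage in $\operatorname{Pic}(X)^G$ of $\ker\!\bigl(\operatorname{Am}(X,G)\to\operatorname{Am}(X,H)\bigr)$ consists exactly of the $G$-invariant line bundles $\mathcal{L}$ for which $\varphi$ factors through $G_{\mathcal{L}}\to G$, and the kernel itself is the image of this set in $\operatorname{Am}(X,G)$, as claimed. I expect the only step needing genuine care to be the identification $H_{\mathcal{L}}\cong H\times_G G_{\mathcal{L}}$ — namely that restricting the lifting-group construction along $\varphi$ is computed by a pullback of extensions — since here one must avoid tacitly assuming $\varphi$ injective or the actions faithful; the rest is bookkeeping with the two exact sequences already in hand.
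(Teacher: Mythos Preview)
Your proposal is correct and is essentially the same argument as the paper's, just phrased in the cokernel description of $\operatorname{Am}$ rather than the embedding into $H^2(G,\mathbb{C}^\times)$: both reduce to the fact that $\partial[\mathcal{L}]$ dies under $H^2(G,\mathbb{C}^\times)\to H^2(H,\mathbb{C}^\times)$ exactly when the pulled-back extension $H\times_G G_{\mathcal{L}}$ splits, which is the same as a factorization of $\varphi$ through $G_{\mathcal{L}}\to G$. The paper's proof is a single sentence relying on this standard interpretation of pullback in $H^2$, whereas you spell out the fiber-product identification $H_{\mathcal{L}}\cong H\times_G G_{\mathcal{L}}$ explicitly; your caution about not assuming $\varphi$ injective is well placed but, as you note, causes no difficulty.
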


\begin{proof}
The kernel of the morphism
$H^2(G,\mathbb{C}^\times) \to H^2(H,\mathbb{C}^\times)$
is the set of classes that are trivialized when pulled back along
$\varphi$.
\end{proof}

\section{Numerical Polynomials}
\label{sec:polynomials}

We recall a special case of Snapper's Theorem~\cite{Kleiman66}.
Given a set of line bundles
$\mathcal{L}_1,\ldots,\mathcal{L}_r$ on a variety $X$,
the Euler-Poincar\'e characteristic
\[
(a_1,\ldots,a_r) \mapsto \chi\left(\mathcal{L}_1^{\otimes a_1}
\otimes \cdots \otimes \mathcal{L}_r^{\otimes a_r}\right)
\]
is a numerical polynomial.
In this section, we recall some well known facts about numerical
polynomials as well as prove some new facts which will be important
below.

We recall that a \emph{numerical polynomial} or \emph{integer-valued
polynomial} is a polynomial $f \in \mathbb{Q}[x_1,\ldots,x_r]$
such that $f(x_1,\ldots,x_r)$ is an integer for all tuples
$(x_1,\ldots,x_r) \in \mathbb{Z}^r$.
In this section, we write $\mathbf{x}=(x_1,\ldots,x_r)$ for elements
of $\mathbb{Z}^r$.

Recall that the binomial coefficient
\[
\binom{x}{a} = \frac{x(x-1)\cdots (x-a+1)}{a!}
\]
is a numerical polynomial in $\mathbb{Q}[x]$ of degree $a$.
For $\mathbf{x} \in \mathbb{Z}^r$ and $\mathbf{a} \in \mathbb{N}^r$
we write
\[
\binom{\mathbf{x}}{\mathbf{a}}
:= \binom{x_1}{a_1} \cdots \binom{x_r}{a_r}
\]
where each $\binom{x_i}{a_i}$ is a binomial coefficient.

If $f$ is a numerical polynomial of degree $n$ in $r$ variables,
then there exists a unique expression
\[
f(\mathbf{x}) = \sum_{\mathbf{a}}
c_{\mathbf{a}} \binom{\mathbf{x}}{\mathbf{a}}
\]
where the sum runs over all $\mathbf{a}$ satisfying the relations
\[
0 \le a_1,\ 0 \le a_2,\ \ldots, 0 \le a_r,\ \sum_{i=1}^r a_i \le
n
\]
and $c_{\mathbf{a}}$ are integers depending on $f$.

For each index $i$, we have the forward difference operator
\[
(\Delta_i f) (\mathbf{x}) = f(\mathbf{x}+\mathbf{e}_i) - f(\mathbf{x})
\]
where $\mathbf{e}_i$ is the $i$th standard basis vector in
$\mathbb{Z}^r$.
These operators can be iterated to obtain
\[
(\Delta_i^a f) (\mathbf{x}) = \sum_{k=0}^a
(-1)^{a-k} \binom{a}{k} f(\mathbf{x}+k\mathbf{e}_i)
\]
For $\mathbf{a} \in \mathbb{N}^r$, we define the operator
\[
\Delta^{\mathbf{a}} f := \Delta_1^{a_1} \cdots \Delta_r^{a_r} f .
\]

We recall the multivariate Newton-Gregory interpolation formula:

\begin{proposition}
If $f$ is an integer-valued polynomial in
$r$ variables of degree $n$,
then
\[
f(\mathbf{y}+\mathbf{x}) = \sum_{\mathbf{a}}
\binom{\mathbf{x}}{\mathbf{a}}(\Delta^{\mathbf{a}}f)(\mathbf{y})
\]
where the sum is over $\mathbf{a} \in \mathbb{N}^r$ such that
$\sum_{i=1}^r a_i \le n$. 
\end{proposition}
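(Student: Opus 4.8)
The plan is to deduce the multivariate formula from the one-variable Newton--Gregory formula by feeding it through the coordinates one at a time, using the commuting shift operators $E_i$ defined by $(E_i g)(\mathbf{x})=g(\mathbf{x}+\mathbf{e}_i)$, so that $E_i=\mathrm{id}+\Delta_i$. \textbf{Step 1 (univariate case).} Fix a one-variable integer-valued polynomial $g$ of degree $\le n$ and a non-negative integer $m$. The binomial theorem applied to the commuting operators $\mathrm{id}$ and $\Delta$ gives
\[
g(y+m)=(E^m g)(y)=\bigl((\mathrm{id}+\Delta)^m g\bigr)(y)=\sum_{a\ge 0}\binom{m}{a}(\Delta^a g)(y),
\]
and since $\Delta$ lowers degree the sum is finite: $\Delta^a g=0$ for $a>n$, while $\binom{m}{a}=0$ for $a>m$. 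Hence $g(y+m)=\sum_{a=0}^{n}\binom{m}{a}(\Delta^a g)(y)$ for every $m\in\mathbb{N}$. As functions of ``$m$'', both sides are polynomials of degree $\le n$ agreeing at infinitely many points, so they agree as polynomials: $g(y+x)=\sum_{a=0}^n\binom{x}{a}(\Delta^a g)(y)$.

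\textbf{Step 2 (bootstrapping).} For $\mathbf{x}\in\mathbb{N}^r$ we have $f(\mathbf{y}+\mathbf{x})=(E_1^{x_1}\cdots E_r^{x_r}f)(\mathbf{y})$. Since the operators $\Delta_i$, hence the $E_i$, commute, we may expand each factor $E_i^{x_i}=(\mathrm{id}+\Delta_i)^{x_i}$ using Step 1 (treating all other coordinates of $\mathbf{x}$ and all of $\mathbf{y}$ as parameters) and multiply out:
\[
f(\mathbf{y}+\mathbf{x})=\Bigl(\prod_{i=1}^r(\mathrm{id}+\Delta_i)^{x_i}f\Bigr)(\mathbf{y})=\sum_{\mathbf{a}\in\mathbb{N}^r}\binom{\mathbf{x}}{\mathbf{a}}(\Delta^{\mathbf{a}}f)(\mathbf{y}).
\]

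\textbf{Step 3 (truncation and the polynomial identity).} Each $\Delta_i$ sends a polynomial of total degree $d$ to one of total degree $\le d-1$, so $\Delta^{\mathbf{a}}f$ has total degree $\le n-\sum_i a_i$; in particular $\Delta^{\mathbf{a}}f=0$ whenever $\sum_i a_i>n$, so the sum is genuinely finite and ranges over $\sum_i a_i\le n$ as claimed. Finally, both sides of the displayed identity are polynomial functions of $\mathbf{x}$ that agree on all of $\mathbb{N}^r$; a polynomial in $r$ variables over $\mathbb{Q}$ vanishing on $\mathbb{N}^r$ is identically zero (immediate induction on $r$), so the two sides coincide as elements of $\mathbb{Q}[x_1,\ldots,x_r]$, completing the proof. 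There is no serious obstacle here: the argument is entirely bookkeeping, and the only points warranting care are the commutativity of the $\Delta_i$ (so that the iterated expansion in Step 2 is unambiguous) and the passage from ``agreement on $\mathbb{N}^r$'' to ``agreement as polynomials''.

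Alternatively, one can extract the coefficients directly: expand the numerical polynomial $\mathbf{x}\mapsto f(\mathbf{y}+\mathbf{x})$ in the basis $\binom{\mathbf{x}}{\mathbf{a}}$ as in the unique expansion recalled above, apply $\Delta^{\mathbf{b}}$ in the variable $\mathbf{x}$ to both sides, use $\Delta_i\binom{x_i}{a_i}=\binom{x_i}{a_i-1}$ (read as $0$ when $a_i=0$, since $\Delta_i$ kills constants), and evaluate at $\mathbf{x}=\mathbf{0}$ to read off that the coefficient of $\binom{\mathbf{x}}{\mathbf{b}}$ is exactly $(\Delta^{\mathbf{b}}f)(\mathbf{y})$.
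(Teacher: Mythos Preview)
The paper does not actually prove this proposition: it is stated as a recalled fact (``We recall the multivariate Newton--Gregory interpolation formula'') with no proof given. So there is nothing to compare against, and the question reduces to whether your argument stands on its own.

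It does. The operator identity $E_i=\mathrm{id}+\Delta_i$ together with the binomial theorem gives the formula for $\mathbf{x}\in\mathbb{N}^r$, the degree-lowering property of each $\Delta_i$ justifies the truncation $\sum_i a_i\le n$, and the Zariski-density (or direct induction) argument upgrades agreement on $\mathbb{N}^r$ to a polynomial identity. Your alternative ``read off the coefficients'' argument via $\Delta^{\mathbf{b}}\binom{\mathbf{x}}{\mathbf{a}}\big|_{\mathbf{x}=\mathbf{0}}=\delta_{\mathbf{a},\mathbf{b}}$ is in fact the cleanest route and would suffice by itself. One small expository point: in Step~2 the phrase ``using Step~1'' is slightly misleading, since for $x_i\in\mathbb{N}$ the expansion of $(\mathrm{id}+\Delta_i)^{x_i}$ is just the ordinary binomial theorem in a commutative operator algebra and does not require the polynomial-extension part of Step~1; the extension to general $\mathbf{x}$ is handled separately in Step~3. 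But this is cosmetic and does not affect correctness.
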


The following is the key new concept we need for producing finite
generating sets for the numerical Amitsur group.

\begin{definition}
A subset $S \subseteq \mathbb{Z}^r$ is
\emph{integrally-poised for polynomials of degree $n$}
if there exists a set
of numerical polynomials $\{ c_{\mathbf{a}} \}_{\mathbf{a} \in S}$
such that
\[
f(\mathbf{x}) = \sum_{\mathbf{a} \in S} c_{\mathbf{a}}(\mathbf{x})
f(\mathbf{a})
\]
for all polynomials $f$ of degree $n$ and all $\mathbf{x} \in \mathbb{Z}^r$.
\end{definition}

Our main example of an integrally-poised subset is the lattice simplex:

\begin{proposition} \label{prop:simplex_poised}
The set
\[
S_n = \left\{
\mathbf{a} \in \mathbb{Z}^r\ \middle| \
a_1 \ge 0,\ \ldots,\ a_r \ge 0,\ \sum_{i=1}^r a_i \le n
\right\} 
\]
is integrally-poised for polynomials of degree $n$.
\end{proposition}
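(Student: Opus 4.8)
The plan is to write down the interpolation polynomials $c_{\mathbf{a}}$ explicitly, using the binomial-coefficient basis together with the finite-difference formulas recalled above, so that the only thing that really needs checking is that every lattice point occurring in these formulas actually belongs to $S_n$.

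First I would use the unique expansion of a numerical polynomial $f$ of degree $n$ in the binomial basis, $f(\mathbf{x}) = \sum_{\mathbf{a}} c_{\mathbf{a}}\binom{\mathbf{x}}{\mathbf{a}}$, in which $\mathbf{a}$ runs over precisely the index set $S_n$. Specializing the multivariate Newton--Gregory formula at $\mathbf{y} = \mathbf{0}$ gives $f(\mathbf{x}) = \sum_{\mathbf{a} \in S_n}\binom{\mathbf{x}}{\mathbf{a}}(\Delta^{\mathbf{a}}f)(\mathbf{0})$, so (by uniqueness of the expansion) the coefficients are the iterated forward differences at the origin, $c_{\mathbf{a}} = (\Delta^{\mathbf{a}} f)(\mathbf{0})$.

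Next I would expand each $(\Delta^{\mathbf{a}} f)(\mathbf{0})$ by applying the formula for $\Delta_i^{a_i}$ one coordinate at a time, obtaining
\[
(\Delta^{\mathbf{a}} f)(\mathbf{0}) = \sum_{\mathbf{0} \le \mathbf{k} \le \mathbf{a}} (-1)^{|\mathbf{a}| - |\mathbf{k}|} \binom{\mathbf{a}}{\mathbf{k}} f(\mathbf{k}),
\]
where $\mathbf{k} \le \mathbf{a}$ denotes the coordinatewise order, $\binom{\mathbf{a}}{\mathbf{k}} = \prod_i \binom{a_i}{k_i}$, and $|\cdot|$ is the sum of the coordinates. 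The key observation — essentially the whole content of the proof — is that every $\mathbf{k}$ occurring here lies in $S_n$: its coordinates are nonnegative by construction, and $|\mathbf{k}| \le |\mathbf{a}| \le n$ because $\mathbf{a} \in S_n$.

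Finally I would substitute this into the expansion of $f$ and interchange the two sums, collecting the coefficient of each value $f(\mathbf{k})$:
\[
f(\mathbf{x}) = \sum_{\mathbf{k} \in S_n} \left( \sum_{\substack{\mathbf{a} \in S_n \\ \mathbf{k} \le \mathbf{a}}} (-1)^{|\mathbf{a}| - |\mathbf{k}|} \binom{\mathbf{a}}{\mathbf{k}} \binom{\mathbf{x}}{\mathbf{a}} \right) f(\mathbf{k}).
\]
Taking $c_{\mathbf{k}}(\mathbf{x})$ to be the inner sum finishes the argument: it is an integer linear combination of the numerical polynomials $\binom{\mathbf{x}}{\mathbf{a}}$, hence is itself a numerical polynomial; it does not depend on $f$; and the displayed identity holds for every $\mathbf{x} \in \mathbb{Z}^r$ and every $f$ of degree $n$. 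I do not expect any genuine obstacle beyond the index bookkeeping — the one substantive point is the containment $\mathbf{k} \in S_n$, which is exactly what forces the interpolation to be supported on the simplex $S_n$.
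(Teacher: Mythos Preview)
Your argument is correct and follows the same route as the paper's proof: specialize the Newton--Gregory formula at $\mathbf{y}=\mathbf{0}$, expand each $(\Delta^{\mathbf{a}}f)(\mathbf{0})$ as an integer combination of the values $f(\mathbf{k})$ with $\mathbf{0}\le\mathbf{k}\le\mathbf{a}$, note that these $\mathbf{k}$ lie in $S_n$, and collect terms. The only difference is that you write the resulting coefficients $c_{\mathbf{k}}(\mathbf{x})$ out explicitly, whereas the paper leaves them implicit.
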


\begin{proof}
Suppose $f(\mathbf{x})$ is a multivariate integer-valued polynomial in
$r$ variables of degree $n$.
From above, we see that  $(\Delta^{\mathbf{a}} f)(0)$ is a specific integral linear
combination of $f(\mathbf{b})$ where $0 \le b_i \le a_i$
for every index $i$.
From the Newton-Gregory interpolation formula,
we see that
\[
f(\mathbf{x}) = \sum_{\mathbf{a} \in S_n}
c_\mathbf{a}(\mathbf{x})f(\mathbf{a})
\]
where each $c_\mathbf{a}$ is a multivariate
integer-valued polynomial that only depends on the number of variables
$r$ and degree $n$.
\end{proof}

Finally, we obtain the result needed for our application to Amitsur
groups.

\begin{proposition} \label{prop:poised}
Suppose $S \subseteq \mathbb{Z}^r$ is integrally poised for
polynomials of degree $n+1$.
If $g$ is an integer-valued polynomial degree $n$, then
we have equality of the subgroups
\begin{equation} \label{eq:gen_sub_equality}
\langle g(\mathbf{x})\mathbf{x} \mid  \mathbf{x} \in \mathbb{Z}^n \rangle
=
\langle g(\mathbf{x})\mathbf{x} \mid \mathbf{x} \in S \rangle
\end{equation}
of $\mathbb{Z}^r$.
\end{proposition}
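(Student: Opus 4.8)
The plan is to prove the two inclusions in \eqref{eq:gen_sub_equality} separately. The inclusion $\supseteq$ is immediate because $S \subseteq \mathbb{Z}^r$, so all the work goes into showing that every generator $g(\mathbf{x})\mathbf{x}$ with $\mathbf{x} \in \mathbb{Z}^r$ already lies in the subgroup generated by the elements $g(\mathbf{a})\mathbf{a}$ with $\mathbf{a} \in S$. So I would fix an arbitrary $\mathbf{x} \in \mathbb{Z}^r$ and aim to write $g(\mathbf{x})\mathbf{x}$ as an explicit integral linear combination of these generators.

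The key device is to apply the integrally-poised hypothesis not to $g$ itself but to the coordinate-weighted polynomials $f_j(\mathbf{y}) := g(\mathbf{y})\, y_j$ for $j = 1, \dots, r$. Since $g$ is integer-valued of degree $n$, each $f_j$ is integer-valued of degree at most $n+1$ --- this is precisely why the hypothesis asks for $S$ to be integrally poised for degree $n+1$ rather than $n$, the extra degree being consumed by the factor $y_j$. By definition of ``integrally poised,'' there is one family of numerical polynomials $\{c_{\mathbf{a}}\}_{\mathbf{a} \in S}$, depending only on $S$ (and $n$), such that $h(\mathbf{x}) = \sum_{\mathbf{a}\in S} c_{\mathbf{a}}(\mathbf{x})\, h(\mathbf{a})$ for every integer-valued polynomial $h$ of degree at most $n+1$; applied to $h = f_j$ this gives
\[
g(\mathbf{x})\, x_j = \sum_{\mathbf{a} \in S} c_{\mathbf{a}}(\mathbf{x})\, g(\mathbf{a})\, a_j \qquad (j = 1, \dots, r).
\]
The point I would emphasize is that the coefficients $c_{\mathbf{a}}(\mathbf{x})$ here do not depend on $j$, since the family $\{c_{\mathbf{a}}\}$ is fixed once and for all for $S$ and works uniformly across all polynomials of the given degree.

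Assembling these $r$ scalar identities into a single vector identity in $\mathbb{Z}^r$ then yields
\[
g(\mathbf{x})\, \mathbf{x} = \sum_{\mathbf{a} \in S} c_{\mathbf{a}}(\mathbf{x})\, g(\mathbf{a})\, \mathbf{a},
\]
and because each $c_{\mathbf{a}}$ is a numerical polynomial and $\mathbf{x} \in \mathbb{Z}^r$, every coefficient $c_{\mathbf{a}}(\mathbf{x})$ is an integer. Hence $g(\mathbf{x})\mathbf{x}$ lies in $\langle g(\mathbf{a})\mathbf{a} \mid \mathbf{a} \in S \rangle$, and letting $\mathbf{x}$ vary over $\mathbb{Z}^r$ gives the inclusion $\subseteq$ and thus the claimed equality.

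I do not expect a genuine obstacle in this argument; the only subtlety --- and the reason I would spell it out --- is that a \emph{single} coefficient family $\{c_{\mathbf{a}}\}$ must serve all $r$ coordinate polynomials $f_1, \dots, f_r$ at once. This is exactly the uniformity built into the definition of ``integrally poised'': if one only had an interpolation formula tailored to the particular polynomial $g$, the coefficients could differ from coordinate to coordinate and the vectors would fail to assemble into the displayed identity.
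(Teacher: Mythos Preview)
Your argument is correct and matches the paper's proof essentially line for line: both apply the integrally-poised hypothesis to the coordinate polynomials $f_j(\mathbf{y})=g(\mathbf{y})y_j$ of degree $\le n+1$, then assemble the resulting scalar identities (with the same coefficients $c_{\mathbf{a}}(\mathbf{x})$) into the vector identity $g(\mathbf{x})\mathbf{x}=\sum_{\mathbf{a}\in S}c_{\mathbf{a}}(\mathbf{x})\,g(\mathbf{a})\mathbf{a}$. Your write-up is in fact slightly more careful in noting the trivial inclusion $\supseteq$ and in stressing that the single family $\{c_{\mathbf{a}}\}$ works for all $j$ simultaneously.
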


\begin{proof}
By assumption, we have polynomials $c_{\mathbf{a}}(\mathbf{x})$
such that 
\[
f(\mathbf{x}) = \sum_{\mathbf{a} \in S} c_{\mathbf{a}}(\mathbf{x})
f(\mathbf{a})
\]
for every polynomial $f$ of degree $n+1$.
Applying the formula to $f(\mathbf{x})=g(\mathbf{x})x_i$ for each
$i=1,\ldots, r$ we recover the formula
\[
g(\mathbf{x})\mathbf{x} = \sum_{\mathbf{a} \in S} c_{\mathbf{a}}(\mathbf{x})
g(\mathbf{a})\mathbf{a} .
\]
This shows that every generator on the left hand side
of \eqref{eq:gen_sub_equality} is contained in the right hand side.
\end{proof}

\subsection{Univariate Polynomials}

Let $f(x)$ be a univariate numerical polynomial of degree $n$.
Recall that $f$ has a unique expression
\[
f(x) = \sum_{i=0}^n a_i \binom{x}{i}
\]
for integers $a_0,\ldots, a_n$.  By convention, we set $a_i=0$ for all
$i < 0$ and $i > n$.

\begin{lemma} \label{lem:vals_to_coefs}
$\gcd \{ f(k) \} := \gcd \{ f(k) \mid k \in \mathbb{Z}\} = \gcd\{ a_0,\ldots,a_n \}$
\end{lemma}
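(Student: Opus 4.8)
The plan is to show that the two integers $d := \gcd\{f(k) \mid k \in \mathbb{Z}\}$ and $e := \gcd\{a_0,\ldots,a_n\}$ divide one another. For the divisibility $e \mid d$, I would simply observe that for every $k \in \mathbb{Z}$ we have $f(k) = \sum_{i=0}^n a_i \binom{k}{i}$, and each binomial coefficient $\binom{k}{i}$ is an integer; hence $e$ divides $f(k)$ for every $k$, so $e \mid d$.

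For the reverse divisibility $d \mid e$, the key point is that each coefficient $a_i$ is an \emph{integral} linear combination of finitely many values of $f$. Concretely, by the (univariate) Newton--Gregory interpolation formula recalled above, $a_i = (\Delta^i f)(0)$, and by the iteration formula for forward differences,
\[
a_i = (\Delta^i f)(0) = \sum_{k=0}^i (-1)^{i-k}\binom{i}{k} f(k).
\]
Since $d$ divides each $f(k)$, it divides this linear combination, so $d \mid a_i$ for every $i$, whence $d \mid e$. Combining the two divisibilities (and noting both quantities are nonnegative by the usual convention for $\gcd$) gives $d = e$.

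I do not expect a genuine obstacle here: the statement is essentially the observation that the change of basis between the $\binom{x}{i}$ and the evaluation functionals $f \mapsto f(k)$ is given by integer matrices in both directions (the binomial/finite-difference matrices, which are unimodular up to the truncation), so the two descriptions generate the same ideal of $\mathbb{Z}$. The only minor care needed is to use exactly the $n+1$ values $f(0),\ldots,f(n)$ — or any contiguous block — so that the finite-difference expressions for $a_0,\ldots,a_n$ are literally integer combinations of values of $f$, which is immediate from the formulas already established in this section.
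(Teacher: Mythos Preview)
Your proof is correct and follows essentially the same approach as the paper: both arguments rest on the fact that the binomial transform between $(a_0,\ldots,a_n)$ and $(f(0),\ldots,f(n))$ is an integer matrix invertible over $\mathbb{Z}$ (lower triangular with $1$'s on the diagonal). The paper first invokes Proposition~\ref{prop:simplex_poised} to reduce $\gcd\{f(k)\mid k\in\mathbb{Z}\}$ to $\gcd\{f(0),\ldots,f(n)\}$ and then uses the matrix-invertibility viewpoint, whereas you bypass that reduction by writing $a_i=(\Delta^i f)(0)$ explicitly as an integer combination of $f(0),\ldots,f(i)$; this is a cosmetic difference only.
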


\begin{proof}
From Prop~\ref{prop:simplex_poised}, we see that
$\gcd \{ f(k) \mid k \in \mathbb{Z}\}$ is equal to the expression
$\gcd \{ f(0),f(1),\ldots,f(n) \}$.
Consider the equations
\begin{equation} \label{eq:binomial_transform}
f(j) = \sum_{i=0}^n a_i \binom{j}{i}
\end{equation}
for integers $0 \le j \le n$.
This a binomial transform,
thus each $a_i$ can be written as an integral
linear combination of $f(0),\ldots,f(n)$.
In particular, the two sets have the same $\gcd$.

For those not familiar with binomial transforms, we sketch an
argument.
We can view \ref{eq:binomial_transform} as a matrix equation where we've multiplied
a vector with entries $a_i$ by a matrix with entries $\binom{j}{i}$
to obtain a vector with entries $f(j)$.
Since $\binom{j}{i} = 0$ for all $j < i$ and
$\binom{i}{i}=1$ for all $i$, the matrix is invertible over the integers.
\end{proof}

\begin{lemma} \label{lem:kfk_expression}
\[kf(k) = \left(\sum_{i=0}^n i(a_i+a_{i-1}) \binom{k}{i}\right) +
(n+1)a_n \binom{k}{n+1}\]
\end{lemma}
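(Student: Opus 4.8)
The plan is to reduce the identity to the single ``absorption'' relation
\[
x\binom{x}{i} = (i+1)\binom{x}{i+1} + i\binom{x}{i},
\]
valid as an equality in $\mathbb{Q}[x]$. To see it, write $x = (x-i)+i$: the factor $x-i$ extends the numerator $x(x-1)\cdots(x-i+1)$ of $\binom{x}{i}$ by one further term, so $(x-i)\binom{x}{i} = (i+1)\binom{x}{i+1}$ directly from the product formula recalled above, while $i\binom{x}{i}$ is already of the desired shape. Since this is a polynomial identity, it may be applied termwise to any numerical polynomial.

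First I would substitute the normal form $f(x) = \sum_{i=0}^n a_i\binom{x}{i}$ and apply the absorption relation to each summand of $xf(x)$, obtaining
\[
xf(x) = \sum_{i=0}^n a_i(i+1)\binom{x}{i+1} + \sum_{i=0}^n i\, a_i\binom{x}{i}.
\]
Next I would reindex the first sum via $j = i+1$. Using the convention already in force that $a_j = 0$ for $j<0$ and $j>n$, both sums may be written with $j$ ranging over $0 \le j \le n+1$, and combining them gives
\[
xf(x) = \sum_{j=0}^{n+1} j\,(a_j + a_{j-1})\binom{x}{j}.
\]
Finally I would separate off the top term $j = n+1$: there $a_{n+1}=0$, so its coefficient is $(n+1)(a_n + a_{n+1}) = (n+1)a_n$, and after setting $x=k$ this is exactly the claimed formula.

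The argument is a routine manipulation of binomial coefficients, and I do not expect a genuine obstacle. The only point needing care is the bookkeeping at the ends of the index range — checking that the $j=0$ term produced by the reindexing vanishes and that no contribution is lost or double-counted when the two sums are merged — but this is immediate from the stated convention $a_i = 0$ outside $0 \le i \le n$.
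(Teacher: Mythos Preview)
Your proposal is correct and follows essentially the same approach as the paper: both hinge on the identity $k\binom{k}{i} = (i+1)\binom{k}{i+1} + i\binom{k}{i}$ (the paper derives it by rearranging $\binom{k}{i+1} = \frac{k-i}{i+1}\binom{k}{i}$, you by splitting $x=(x-i)+i$), after which the paper simply says ``the result follows immediately'' while you spell out the reindexing.
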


\begin{proof}
Rearranging the identity
\[
\binom{k}{i+1} = \frac{k-i}{i+1}\binom{k}{i} 
\]
we obtain
\[
k\binom{k}{i} = (i+1)\binom{k}{i+1} + i\binom{k}{i} .
\]
The result follows immediately.
\end{proof}

Applying Lemma~\ref{lem:vals_to_coefs} to
Lemma~\ref{lem:kfk_expression}, we obtain:

\begin{corollary} \label{cor:mod_condition}
An integer $m$ divides $\gcd \{ kf(k) \mid k \in \mathbb{Z}\}$ if and only if the integers
$a_0,\ldots, a_n$ satisfy the congruences
\begin{align*}
a_1 + a_0 &\equiv 0 \mod{m}\\
2(a_2 + a_1) &\equiv 0 \mod{m}\\
3(a_3 + a_2) &\equiv 0 \mod{m}\\
&\vdots\\
k(a_k+a_{k-1}) &\equiv 0 \mod{m}\\
&\vdots\\
n(a_n+a_{n-1}) &\equiv 0 \mod{m}\\
(n+1)a_n &\equiv 0 \mod{m}.
\end{align*}
\end{corollary}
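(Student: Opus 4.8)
The plan is to combine the two preceding lemmas almost directly. First I would set $g(x) := x f(x)$, which is again a numerical polynomial, now of degree $n+1$, and invoke Lemma~\ref{lem:kfk_expression} to record its expansion in the binomial basis:
\[
g(x) = \sum_{i=0}^{n+1} b_i \binom{x}{i}, \qquad
b_i = i(a_i + a_{i-1}) \ \text{for } 0 \le i \le n, \quad b_{n+1} = (n+1)a_n .
\]
Here $b_0 = 0$ automatically, so the $i=0$ slot carries no information; this is the one bookkeeping point to get right so that the eventual list of congruences matches.

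Next I would apply Lemma~\ref{lem:vals_to_coefs} to $g$, which identifies the gcd of its values with the gcd of its binomial coefficients:
\[
\gcd \{ k f(k) \mid k \in \mathbb{Z} \}
= \gcd \{ g(k) \mid k \in \mathbb{Z} \}
= \gcd \{ b_0, b_1, \ldots, b_{n+1} \} .
\]
Finally, an integer $m$ divides $\gcd\{b_0,\ldots,b_{n+1}\}$ if and only if it divides each $b_i$ individually. Dropping the vacuous condition $m \mid b_0 = 0$, this says precisely that $i(a_i + a_{i-1}) \equiv 0 \pmod{m}$ for each $1 \le i \le n$ and that $(n+1)a_n \equiv 0 \pmod{m}$, which is the asserted system of congruences.

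There is essentially no obstacle here: Lemma~\ref{lem:kfk_expression} has already absorbed the only genuine computation (the Pascal-type identity $k\binom{k}{i} = (i+1)\binom{k}{i+1} + i\binom{k}{i}$), and Lemma~\ref{lem:vals_to_coefs} supplies the reduction from values to coefficients, so the corollary is just the elementary remark that divisibility of a gcd is divisibility of each generator. The only care needed is in degree bookkeeping — confirming $g$ has degree exactly $n+1$ (or handling $f \equiv 0$ as a trivial case) and that the $i=0$ coefficient vanishes — so that Lemma~\ref{lem:vals_to_coefs} is applied to the correct degree and the congruence list comes out exactly as stated.
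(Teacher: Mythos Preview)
Your proposal is correct and follows exactly the same route as the paper, which simply says ``Applying Lemma~\ref{lem:vals_to_coefs} to Lemma~\ref{lem:kfk_expression}.'' You have just spelled out the details of that application, including the harmless bookkeeping about $b_0=0$ and the degree of $g$.
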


\begin{lemma} \label{lem:Cheb_bound}
$\displaystyle
\frac{\gcd \{ kf(k) \}}{\gcd \{ f(k) \}}$
divides 
$\operatorname{lcm}\{1,\ldots,n+1\}$.
\end{lemma}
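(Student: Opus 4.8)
The plan is to reduce the statement to an elementary fact about $p$-adic valuations of the binomial coefficients $a_0,\dots,a_n$ of $f$. Set $d=\gcd\{f(k)\}$ and $e=\gcd\{kf(k)\}$. By Lemma~\ref{lem:vals_to_coefs}, $d=\gcd\{a_0,\dots,a_n\}$; applying the same lemma to the degree-$(n+1)$ polynomial $kf(k)$ and reading off its binomial coefficients from Lemma~\ref{lem:kfk_expression} gives
\[
e=\gcd\bigl(\{\, i(a_i+a_{i-1}) \mid 1\le i\le n\,\}\cup\{(n+1)a_n\}\bigr).
\]
Since $d$ divides every $a_i$, it divides each of these generators, so $d\mid e$ and the quotient $e/d$ is an integer. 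It therefore remains to prove that $e\mid dL$, where $L:=\operatorname{lcm}\{1,\dots,n+1\}$, and since divisibility can be checked prime by prime, I would fix a prime $p$ and argue locally.

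So I would write $v$ for the $p$-adic valuation and put $m=v(d)=\min_i v(a_i)$; the goal becomes $v(e)\le m+v(L)$, and for that it is enough to produce a single one of the generators above with valuation at most $m+v(L)$. The key observation is that $v(j)\le v(L)$ for every $j$ with $1\le j\le n+1$. If $v(a_n)=m$, the generator $(n+1)a_n$ has valuation $v(n+1)+m\le v(L)+m$ and we are done. Otherwise $v(a_n)>m$; choose the largest index $j$ with $v(a_j)=m$, so $j\le n-1$ and, by maximality, $v(a_{j+1})>m$. Then $a_j$ and $a_{j+1}$ have distinct valuations, so $v(a_j+a_{j+1})=m$ exactly, and the generator $(j+1)(a_{j+1}+a_j)$ --- which is genuinely on our list because $1\le j+1\le n$ --- has valuation $v(j+1)+m\le v(L)+m$. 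In either case $v(e)\le m+v(L)$, which proves $e\mid dL$ and hence the lemma.

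I do not expect a serious obstacle: the only content is the valuation bookkeeping in the second case, namely that one can always find consecutive coefficients $a_j,a_{j+1}$ whose sum keeps the minimal valuation while the index multiplier $j+1$ stays $\le n+1$. The points to be careful about are purely formal --- that $j+1$ does not exceed $n$ (guaranteed because $v(a_n)>m$ forces $j\ne n$) and the degenerate case $f=0$, for which the statement is vacuous and may be excluded at the outset. Alternatively, one could run the same argument through the congruences of Corollary~\ref{cor:mod_condition} instead of valuations, but the prime-by-prime version above seems the most transparent.
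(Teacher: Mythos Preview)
Your proof is correct and shares the paper's core strategy: localize at a prime $p$, use Lemma~\ref{lem:vals_to_coefs} and the binomial coefficients of $kf(k)$ from Lemma~\ref{lem:kfk_expression} (equivalently, the congruences of Corollary~\ref{cor:mod_condition}), and control the $p$-part of $e/d$ by the $p$-part of some index $j\le n+1$. The execution differs slightly. The paper first normalizes so that $\gcd(a_0,\dots,a_n)=1$, assumes for contradiction that $p^s>n+1$ divides $\gcd\{kf(k)\}$, and then cascades the congruences $a_k\equiv -a_{k-1}\pmod p$ from $k=n+1$ down to $k=1$ to force $p\mid a_i$ for all $i$. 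You instead work directly with valuations, without normalizing and without contradiction: you locate the largest index $j$ with $v(a_j)=m$ and exhibit a single generator $(j{+}1)(a_{j+1}+a_j)$ (or $(n{+}1)a_n$ if $j=n$) with valuation exactly $m+v(j{+}1)\le m+v(L)$. Your version is a touch more streamlined; the paper's cascading argument makes the ``telescoping'' structure of the congruences more visible. Either way the content is the same, and your handling of the edge cases ($j+1\le n$ because $v(a_n)>m$, and the vacuous $f=0$ case) is fine.
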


\begin{proof}
It follows immediately from the definitions that
$\gcd \{ f(k) \}$ divides $\gcd \{ kf(k) \}$.
By Lemma~\ref{lem:vals_to_coefs}, we see that
\[
\gcd\left\{
\frac{a_0}{\gcd \{ f(k) \}},
\cdots
\frac{a_n}{\gcd \{ f(k) \}}
\right\} = 1;
\]
thus, we may assume $a_0,\ldots,a_n$ have no common factor.
Let $p$ be a prime and $s$ be a positive integer
such that $p^s$ divides $\gcd \{ kf(k) \}$.
It remains to prove that $p^s \le n+1$.

Suppose otherwise; that $p^s > n+1$.
Consider $0 < k \le n+1$.
Write $k=p^rm$ where $r$ is a non-negative integer and $m$
is a positive integer coprime to $p$.
Since $k < p^s$, we have $r < s$.
Applying Corollary~\ref{cor:mod_condition}, we have the following:
\begin{align*}
ka_k &\equiv -ka_{k-1} \pmod{p^s}\\
p^rma_k &\equiv -p^rma_{k-1} \pmod{p^s}\\
ma_k &\equiv -ma_{k-1} \pmod{p^{s-r}}\\
ma_k &\equiv -ma_{k-1} \pmod{p}\\
a_k &\equiv -a_{k-1} \pmod{p}.
\end{align*}
From $k=n+1$, we conclude that $a_n$ is divisible by $p$
since $a_{n+1}=0$.
From $k=n$, we then conclude that $a_{n-1}$ is also divisible by $p$.
Continuing in this way, we conclude that $a_0,\ldots,a_n$ are all divisible by $p$.
This contradicts that they have no common factor.
\end{proof}

\begin{remark}
The expression
\[
\operatorname{lcm}\{1,\ldots,n+1\}
\]
is exactly the exponent of the symmetric group $S_{n+1}$.
This function has many other interpretations~\cite[A003418]{oeis}.
In particular, the logarithm $\psi(x) = \log \operatorname{lcm}\{1,\ldots,x\}$
is the second Chebyshev function.
One can show that $\psi(x) \sim x$ and certain more refined estimates 
turn out to be equivalent to the Riemann
Hypothesis~\cite[\S{18}]{Davenport}.
(We thank F.~Thorne for pointing out this reference.)
\end{remark}

\section{Upper bounds on Amitsur periods}
\label{sec:upper_bounds}

Suppose $X$ is a smooth projective variety with an action of a finite
group $G$.
Recall that, if $\mathcal{L}$ is $G$-invariant line bundle,
then the Amitsur period $\AmExp(X,G;\mathcal{L})=\AmExp(\mathcal{L})$
is the order of $\partial[\mathcal{L}]$ in
the Amitsur group $\operatorname{Am}(X,G)$.
The value $\AmExp(X,G)$ is the exponent of the entire group
$\operatorname{Am}(X,G)$.

\begin{lemma} \label{lem:mdivf}
If $X$ is a smooth $G$-variety and $\mathcal{L}$ is a $G$-invariant
line bundle class,
then $\AmExp(\mathcal{L})$ divides $\gcd \{ kf(k) \}$
where $f(k)= \chi(\mathcal{L}^{\otimes{k}})$.
\end{lemma}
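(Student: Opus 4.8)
The plan is to reduce everything to Proposition~\ref{prop:Euler} applied not just to $\mathcal{L}$ but to each of its tensor powers. The point is that $\operatorname{Pic}(X)^G$ is a subgroup of $\operatorname{Pic}(X)$, so $\mathcal{L}^{\otimes k}$ is again $G$-invariant for every $k \in \mathbb{Z}$, and the connecting map $\partial : \operatorname{Pic}(X)^G \to H^2(G,\mathbb{C}^\times)$ in the exact sequence~\eqref{eq:Leray} is a homomorphism of abelian groups. Hence $\partial([\mathcal{L}^{\otimes k}]) = k\,\partial([\mathcal{L}])$.

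First I would fix $k \in \mathbb{Z}$ and apply Proposition~\ref{prop:Euler} to the $G$-invariant line bundle $\mathcal{L}^{\otimes k}$: this gives $\chi(\mathcal{L}^{\otimes k})\,\partial([\mathcal{L}^{\otimes k}]) = 0$ in $H^2(G,\mathbb{C}^\times)$. Substituting $f(k) = \chi(\mathcal{L}^{\otimes k})$ and $\partial([\mathcal{L}^{\otimes k}]) = k\,\partial([\mathcal{L}])$, this reads $f(k)\,k\,\partial([\mathcal{L}]) = 0$, i.e.\ $k f(k)\,\partial([\mathcal{L}]) = 0$. Since $\AmExp(\mathcal{L})$ is by definition the order of $\partial([\mathcal{L}])$ in $H^2(G,\mathbb{C}^\times)$, this says precisely that $\AmExp(\mathcal{L})$ divides $k f(k)$.

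Finally, since this holds for every $k \in \mathbb{Z}$, the integer $\AmExp(\mathcal{L})$ divides $\gcd\{\,k f(k) \mid k \in \mathbb{Z}\,\}$, which is the claim. (One could also observe that $f(0) = \chi(\mathcal{O}_X)$, so the gcd in question is automatically a divisor of $0$ in the trivial sense and the content is carried by $k \neq 0$; this is worth a parenthetical remark but changes nothing.) There is no real obstacle here: the only thing to be careful about is that Snapper's theorem (recalled at the start of Section~\ref{sec:polynomials}) guarantees $k \mapsto f(k)$ is a numerical polynomial, so that the notation $\gcd\{kf(k)\}$ and the univariate results of the previous section (e.g.\ Lemma~\ref{lem:Cheb_bound}) apply verbatim; but this is only needed for the corollaries that follow, not for the statement of the lemma itself.
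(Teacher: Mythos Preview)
Your proof is correct and follows essentially the same approach as the paper: apply Proposition~\ref{prop:Euler} to each tensor power $\mathcal{L}^{\otimes k}$ and use that $\partial$ is a homomorphism so $\partial([\mathcal{L}^{\otimes k}]) = k\,\partial([\mathcal{L}])$. The paper phrases the conclusion via the subgroup $\langle kf(k)[D]\rangle \subseteq \PicForget{X}{G} \cap \langle [D]\rangle$ of $\operatorname{Pic}(X)$, while you phrase it directly as a divisibility statement for the order of $\partial([\mathcal{L}])$ in $H^2(G,\mathbb{C}^\times)$; these are equivalent reformulations of the same argument.
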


\begin{proof}
Let us use additive notation with a divisor $D$
representing the class of $\mathcal{L}$;
thus $f(k)=\chi(kD)$ for every integer $k \in \mathbb{Z}$.
By Proposition~\ref{prop:Euler},
we have $\chi(\mathcal{L})\partial(\mathcal{L})=0$ for every
$G$-invariant line bundle $\mathcal{L}$.
Thus
\[
\partial(f(k)kD)=\chi(kD)\partial(kD)=0
\]
for every integer $k$.
In other words, $f(k)k[D]$ is $G$-linearizable for every $k \in
\mathbb{Z}$.
This means that
\[
A = \left\langle
kf(k)[D]
\ \middle|\ k \in \mathbb{Z}
\right\rangle
\]
is a subgroup of the image $\PicForget{X}{G}$ of $\operatorname{Pic}(X,G)$ in
$\operatorname{Pic}(X)^G$.
Now $A$ is a subgroup of $\langle [D] \rangle \cong \mathbb{Z}$.
Thus $A$ is generated by $\gcd \{ kf(k) \}[D]$.
Since
\[
\PicForget{X}{G} \cap \langle [D] \rangle = \left\langle \AmExp(D)[D] \right\rangle,
\]
we conclude that $\AmExp(D)$ divides $\gcd \{ kf(k) \}$.
\end{proof}

\subsection{A uniform bound}

We are now in a position to prove the uniform bound stated in the
introduction.

\begin{theorem} \label{thm:uniform}
Let $X$ be a smooth variety of dimension $n$
with arithmetic genus $p_a$.
For any finite group $G$, the Amitsur period
$\AmExp(X,G)$ divides
\[
(1 + (-1)^n p_a) \operatorname{lcm}\{1,\ldots,n+1\} .
\]
\end{theorem}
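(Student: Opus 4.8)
The plan is to assemble the bound from Lemma~\ref{lem:mdivf}, Lemma~\ref{lem:Cheb_bound}, and the identity $\chi(\mathcal{O}_X) = 1 + (-1)^n p_a$ coming from the definition of the arithmetic genus. Since $\AmExp(X,G)$ is, by definition, the least common multiple of $\AmExp(\mathcal{L})$ over all $G$-invariant line bundles $\mathcal{L}$, it suffices to bound $\AmExp(\mathcal{L})$ for a single such $\mathcal{L}$. We may also assume $\chi(\mathcal{O}_X) \neq 0$: otherwise $1 + (-1)^n p_a = 0$, and every integer divides the claimed value, so there is nothing to prove.

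So fix a $G$-invariant line bundle $\mathcal{L}$, let $D$ be a divisor in its class, and set $f(k) = \chi(kD)$. By Snapper's theorem (recalled at the beginning of Section~\ref{sec:polynomials}), $f$ is a numerical polynomial in $k$ of degree at most $n$, and it is nonzero since $f(0) = \chi(\mathcal{O}_X) \neq 0$. Lemma~\ref{lem:mdivf} gives that $\AmExp(\mathcal{L})$ divides $\gcd\{kf(k)\}$. Applying Lemma~\ref{lem:Cheb_bound} to $f$ (of degree $m \le n$, say) shows that $\gcd\{kf(k)\}/\gcd\{f(k)\}$ divides $\operatorname{lcm}\{1,\ldots,m+1\}$, which in turn divides $\operatorname{lcm}\{1,\ldots,n+1\}$; hence $\gcd\{kf(k)\}$ divides $\gcd\{f(k)\}\cdot\operatorname{lcm}\{1,\ldots,n+1\}$. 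Finally, $\gcd\{f(k)\}$ divides the particular value $f(0) = \chi(\mathcal{O}_X) = 1 + (-1)^n p_a$.

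Chaining these divisibilities yields that $\AmExp(\mathcal{L})$ divides $(1 + (-1)^n p_a)\operatorname{lcm}\{1,\ldots,n+1\}$, and taking the least common multiple over all $G$-invariant line bundles $\mathcal{L}$ gives the stated bound on $\AmExp(X,G)$. No single step is hard on its own; the real content is already packaged inside Lemma~\ref{lem:Cheb_bound} (the estimate $p^s \le n+1$), and the only points requiring a little care are the degenerate case $\chi(\mathcal{O}_X) = 0$ and the reduction from polynomials of degree exactly $n$ to degree $m \le n$ when $D$ is numerically small.
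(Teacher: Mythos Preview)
Your proof is correct and follows essentially the same route as the paper: apply Lemma~\ref{lem:mdivf} to bound $\AmExp(\mathcal{L})$ by $\gcd\{kf(k)\}$, then use Lemma~\ref{lem:Cheb_bound} together with $f(0)=\chi(\mathcal{O}_X)=1+(-1)^n p_a$ to bound $\gcd\{kf(k)\}$. You are slightly more explicit than the paper about the degenerate case $\chi(\mathcal{O}_X)=0$ and about the possibility that $\deg f < n$, but these are minor clarifications rather than a different argument.
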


\begin{proof}
The choice of finite group is irrelevant.
We consider the polynomial $f(k)=\chi(kD)$ where $D$ represents a
$G$-invariant divisor class.
Observe that $f(0)=\chi(\mathcal{O}_X)=(1 + (-1)^n p_a)$.
Therefore $\gcd \{ f(k) \}$ divides $(1 + (-1)^n p_a)$.
From Lemma~\ref{lem:Cheb_bound},
$\gcd\{kf(k)\}$ divides $\gcd\{f(k)\} \operatorname{lcm}\{1, \dots , n+1\}$,
which, in turn, divides $(1+(-1)^np_a) \operatorname{lcm}\{1, \dots , n+1\}$.
Now from Lemma~\ref{lem:mdivf}, $\AmExp(X,G)$ divides $\gcd\{kf(k)\}$.
Thus, $\AmExp(X,G)$ divides
$(1+(-1)^np_a) \operatorname{lcm}\{1, \dots , n+1\}$.
\end{proof}

\begin{remark}
For any positive integer $m$,
there exists a choice of finite group $G$ such that
$\operatorname{Am}(\mathbb{P}^{m-1},G) \cong \mathbb{Z}/m\mathbb{Z}$.
If $n \ge m-1$, then we may construct
$X \cong \mathbb{P}^{m-1} \times \mathbb{P}^{n-m+1}$
where $G$ acts trivially on $\mathbb{P}^{n-m+1}$.
We see that $X$ has dimension $n$ and
$\operatorname{Am}(X,G) \cong \mathbb{Z}/m\mathbb{Z}$ has exponent $m$.
This construction works for any $m \le n+1$;
in particular for any prime power $\le n+1$.
Therefore, for varieties of genus $0$,
the bound from Theorem~\ref{thm:uniform} is multiplicatively sharp.
\end{remark}

\begin{remark}
The bound is vacuous for curves of genus $1$
since $\chi(\mathcal{O}_X)=0$.
This is not surprising since there is no uniform bound for \emph{any}
such curve if one does not fix the group $G$.
Indeed, for every elliptic curve and every prime $p$,
there is a group of automorphisms
$G$ isomorphic to $C_p^2$ that acts by translations.
Recall that $H^2(G,\mathbb{C}^\times) \cong
\mathbb{Z}/p\mathbb{Z}$.
By Theorem~\ref{thm:Dolgachev},
we see that $\operatorname{Am}(X,G) = \mathbb{Z}/p\mathbb{Z}$.
Therefore, the period $\AmExp(X,G)$ can be made arbitrarily large
by appropriate choice of $G$.
\end{remark}

\begin{remark}
The bound is \emph{not} necessarily sharp for higher genus curves.
By Theorem~\ref{thm:Dolgachev}, if $G$ acts faithfully then
$\AmExp(X,G)$ is the exponent of $H^2(G,\mathbb{C}^\times)$.
The Hurwitz bound states that $|G| \le 84(p_a - 1)$ for a group acting
faithfully on a curve $X$.
However, if the $p$-primary component of $H^2(G,\mathbb{C}^\times)$ is non-zero,
then $G$ must have order at least $p^2$.
\end{remark}

\subsection{Bounds on the Amitsur period for specific divisors}

The remainder of this section demonstrate how one can use the numerical
results from the previous section to bound the Amitsur periods of
specific divisors using intersection-theoretic information.
In order to do this, we use expressions
\begin{equation} \label{eq:f_coefs}
f(k) = \chi(kD) = \sum_{i=0}^n a_i \binom{k}{i}
\end{equation}
where $a_0,\ldots,a_n$ are constants depending on $D$.

\begin{proposition}
Suppose $X$ is a smooth curve with genus $p_a$
and $D$ is a divisor whose class is in $\operatorname{Pic}(X)^G$.
\begin{enumerate}
\item $\AmExp(D)$ divides $\gcd \{ kf(k) \}$.
\item $\gcd \{ f(k) \} = \gcd\{(1-p_a), \deg(D) \}$
\item $\gcd \{ kf(k) \} = \gcd\{\deg(D)+(1-p_a), 2\deg(D)\}$
\item
$\displaystyle
\frac{\gcd \{ kf(k) \}}{\gcd \{ f(k) \}} = \begin{cases}
2 & \textrm{$1-p_a$ and $\deg(D)$ have same parity}\\
1 & \textrm{otherwise} .
\end{cases}$.
\end{enumerate}
\end{proposition}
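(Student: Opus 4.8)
The plan is to specialize the one-variable results of Section~\ref{sec:polynomials} to the polynomial $f(k) = \chi(kD)$ attached to a divisor on a curve. By Riemann--Roch on the smooth curve $X$ we have $f(k) = k\deg(D) + (1-p_a)$, a numerical polynomial of degree at most $1$ (matching $\dim X = 1$), whose expansion in binomial coefficients is $f(k) = a_0\binom{k}{0} + a_1\binom{k}{1}$ with $a_0 = 1-p_a$ and $a_1 = \deg(D)$. With this dictionary in hand, each of (a)--(d) becomes a statement about a degree-$1$ numerical polynomial that is covered by an earlier result.

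Part (a) is precisely Lemma~\ref{lem:mdivf}. For part (b), I would apply Lemma~\ref{lem:vals_to_coefs} to $f$: the gcd of its values is the gcd of its coefficients $a_0,a_1$, which is $\gcd\{1-p_a,\deg(D)\}$. For part (c), I would first apply Lemma~\ref{lem:kfk_expression}; its $i=0$ summand vanishes thanks to the leading factor $i$, leaving $kf(k) = (a_1+a_0)\binom{k}{1} + 2a_1\binom{k}{2}$. Then I would apply Lemma~\ref{lem:vals_to_coefs} a second time, now to the degree-$2$ numerical polynomial $kf(k)$, to get $\gcd\{kf(k)\} = \gcd\{0,\ a_1+a_0,\ 2a_1\} = \gcd\{\deg(D)+(1-p_a),\ 2\deg(D)\}$; alternatively one reads this straight off the congruences in Corollary~\ref{cor:mod_condition} with $n=1$.

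For part (d), I would combine (b) and (c). Since $g := \gcd\{1-p_a,\deg(D)\}$ divides both $\deg(D)+(1-p_a)$ and $2\deg(D)$, the quotient in question is a positive integer, and Lemma~\ref{lem:Cheb_bound} forces it to divide $\operatorname{lcm}\{1,2\} = 2$, so it is $1$ or $2$. It equals $2$ exactly when $2g$ divides $\deg(D)+(1-p_a)$, the divisibility $2g \mid 2\deg(D)$ being automatic. Writing $1-p_a = g\alpha$ and $\deg(D) = g\beta$ with $\gcd(\alpha,\beta)=1$, this becomes the condition that $\alpha+\beta$ is even, i.e.\ that $\alpha$ and $\beta$ are both odd, which unwinds to the stated parity condition on $1-p_a$ and $\deg(D)$. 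The only step that is more than a citation is this last elementary parity check, so I do not anticipate any genuine obstacle.
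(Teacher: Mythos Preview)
Your approach mirrors the paper's exactly: compute $f(k)=\chi(kD)$ via Riemann--Roch, read off $a_0=1-p_a$ and $a_1=\deg(D)$, and then invoke the univariate lemmas of Section~\ref{sec:polynomials}. Parts (a)--(c) go through just as you say, and this is precisely what the paper's one-line proof is pointing to.

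There is, however, a genuine gap in your last step for (d). You correctly deduce that the ratio equals $2$ if and only if $\alpha=(1-p_a)/g$ and $\beta=\deg(D)/g$ are both odd (equivalently, $1-p_a$ and $\deg(D)$ have equal $2$-adic valuation). But the claim that this ``unwinds to the stated parity condition on $1-p_a$ and $\deg(D)$'' is only one implication: if $\alpha,\beta$ are both odd then $g\alpha$ and $g\beta$ share the parity of $g$, whereas the converse fails whenever $g$ is even and exactly one of $\alpha,\beta$ is even. Concretely, take $p_a=3$ and $\deg(D)=4$, so $a_0=-2$, $a_1=4$, $g=2$, $\alpha=-1$, $\beta=2$; here $1-p_a$ and $\deg(D)$ are both even, yet $\gcd\{f(k)\}=\gcd\{-2,4\}=2$ and $\gcd\{kf(k)\}=\gcd\{2,8\}=2$, giving ratio $1$. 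Thus your argument actually proves a sharper (and correct) criterion than the one printed; since the paper's own proof does not elaborate on (d) beyond ``the results follow,'' the discrepancy sits in the statement rather than in your method.
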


\begin{proof}
The Riemann-Roch theorem gives us
\[
\chi(kD)=\deg(kD)+(1-p_a)
=\left(\deg(D)\right)k+(1-p_a) .
\]
Thus, $a_0=(1-p_a)$ and $a_1=\deg(D)$ in \eqref{eq:f_coefs}
and the results follow from the results in the previous section.
\end{proof}

\begin{proposition}
Suppose $X$ is a smooth surface with arithmetic genus $0$
and $D$ is a divisor whose class is in $\operatorname{Pic}(X)^G$.
\begin{enumerate}
\item $\AmExp(D)$ divides $6$.
\item $\AmExp(D)$ divides $\chi(D)$.
\item If $2$ divides $\AmExp(D)$, then
$D^2$ and $D\cdot K_X$ are both even, but are distinct modulo $4$.
\item If $3$ divides $\AmExp(D)$, then
$D\cdot K_X \equiv 0 \mod{3}$ and $D^2 \equiv 1 \mod{3}$.
\end{enumerate}
\end{proposition}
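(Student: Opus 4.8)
\section*{Proof proposal}

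The plan is to funnel everything through the univariate results of Section~\ref{sec:polynomials} applied to the polynomial $f(k)=\chi(kD)$. Since $X$ is a surface with $p_a=0$ we have $\chi(\mathcal{O}_X)=1$, so Riemann--Roch gives
\[
f(k)=\chi(kD)=1+\tfrac12\bigl(k^2D^2-k\,(D\cdot K_X)\bigr).
\]
Using $k^2=2\binom{k}{2}+\binom{k}{1}$ and comparing with \eqref{eq:f_coefs}, I would read off the coefficients
\[
a_0=1,\qquad a_1=\tfrac12(D^2-D\cdot K_X)=\chi(D)-1,\qquad a_2=D^2,
\]
noting that $a_1\in\mathbb{Z}$ by the adjunction formula. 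These three formulas, together with the identities $D^2=a_2$ and $D\cdot K_X=a_2-2a_1$, are the entire dictionary the proof needs.

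Parts (a) and (b) require no new work: (b) is Proposition~\ref{prop:Euler}, and (a) is the $n=2$, $p_a=0$ case of Theorem~\ref{thm:uniform}, where the bound specializes to $1\cdot\operatorname{lcm}\{1,2,3\}=6$. (Alternatively, Lemma~\ref{lem:mdivf} and Lemma~\ref{lem:Cheb_bound} give the same conclusion once one observes that $\gcd\{f(k)\}$ divides $f(0)=1$.)

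For (c) and (d) I would first invoke Lemma~\ref{lem:mdivf}, which gives that $\AmExp(D)$ divides $\gcd\{kf(k)\}$; hence if a prime $p$ divides $\AmExp(D)$ then $p$ divides $\gcd\{kf(k)\}$, and Corollary~\ref{cor:mod_condition} applies with $m=p$ and $n=2$, yielding the congruences $a_1+a_0\equiv0$, $2(a_2+a_1)\equiv0$, $3a_2\equiv0 \pmod p$. Taking $p=2$: the first forces $a_1$ odd (since $a_0=1$), the second is automatic, and the third forces $a_2$ even; then $D^2=a_2$ and $D\cdot K_X=a_2-2a_1$ are both even, while $D^2-D\cdot K_X=2a_1\equiv2\pmod4$, which is exactly (c). Taking $p=3$: the first gives $a_1\equiv-1$, and since $2$ is invertible modulo $3$ the second gives $a_2\equiv-a_1\equiv1$; translating back, $D^2\equiv1$ and $D\cdot K_X=a_2-2a_1\equiv1-2(-1)\equiv0\pmod3$, which is (d).

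The argument is essentially bookkeeping once the coefficient dictionary is in place. The only point needing a moment's care is in (c): extracting the ``modulo $4$'' statement from ``modulo $2$'' information about $a_1$, which works precisely because $a_1$ is a half-integer combination of $D^2$ and $D\cdot K_X$. I do not anticipate any genuine obstacle.
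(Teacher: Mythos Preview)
Your proposal is correct and follows essentially the same approach as the paper: both compute the binomial coefficients $a_0=1$, $a_1=\tfrac12(D^2-D\cdot K_X)$, $a_2=D^2$ from Riemann--Roch, invoke Lemma~\ref{lem:mdivf} and Corollary~\ref{cor:mod_condition}, and then read off the congruences at $p=2$ and $p=3$. The only cosmetic difference is that you carry out the mod-$p$ analysis in the $a_i$ variables and translate back via $D\cdot K_X=a_2-2a_1$, whereas the paper substitutes the intersection-theoretic expressions into the three quantities $a_1+a_0$, $2(a_2+a_1)$, $3a_2$ before reducing; the content is identical.
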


\begin{proof}
Since $\chi(0)=1+p_a=1$, we conclude $\AmExp(D)$ divides $6$ from
Theorem~\ref{thm:uniform}.
The second bound is Proposition~\ref{prop:Euler},
but we include it here for completeness.
For the remaining bounds, we use the fact that $\AmExp(D)$
divides $\gcd \{ kf(k) \}$ by Lemma~\ref{lem:mdivf}.

The usual Riemann-Roch theorem for surfaces is
\[
\chi(D) =  1+p_a + \frac{1}{2}\left(D^2-D\cdot K_X\right)
\]
From this, we determine the coefficients
\begin{align*}
a_0 &= \chi(0) = 1+p_a = 1\\
a_1 &= \frac{1}{2}\left(D^2-D\cdot K_X\right)\\
a_2 &= D^2
\end{align*}
for the binomial coefficients in \eqref{eq:f_coefs}.
Thus, by Corollary~\ref{cor:mod_condition}, the multiplicity
$\AmExp(D)$ divides all of the following expressions:
\begin{align}
a_1+a_0 &= \frac{1}{2}\left(D^2-D\cdot K_X\right)+1 \label{eq:2a10}\\
2(a_2+a_1) &= 3D^2-D\cdot K_X \label{eq:2a21}\\
3a_2 &= 3D^2 \label{eq:2a32}
\end{align}

Suppose $\gcd \{ kf(k)\}$ is divisible by $2$.
Immediately, we see that $D^2$ must be even.
Using expression \eqref{eq:2a21}, we conclude that $D \cdot K_X$ is also
even.
Next, expression \eqref{eq:2a10} implies that $\frac{1}{2}\left(D^2-D\cdot K_X\right)$
is odd.

Suppose $\gcd \{ kf(k)\}$ is divisible by $3$.
From expression \eqref{eq:2a21}, we see that $D\cdot K_X \equiv 0 \mod{3}$.
Now, from expression \eqref{eq:2a10}, we have $D^2+2 \equiv 0 \mod{3}$.
\end{proof}

\begin{proposition}
Suppose $X$ is a smooth threefold with arithmetic genus $0$
and $D$ is a divisor whose class is in $\operatorname{Pic}(X)^G$.
\begin{enumerate}
\item $\AmExp(D)$ divides $12$.
\item $\AmExp(D)$ divides $\chi(D)$.
\item $2D^3 \equiv 6 \pmod{m}$ and $D^2\cdot K_X \equiv 4 \pmod{2m}$
\end{enumerate}
\end{proposition}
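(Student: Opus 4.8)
The plan is to treat the three assertions in turn. For (a), note that since $\dim X=3$ and $p_a=0$ we have $\chi(\mathcal{O}_X)=1+(-1)^3p_a=1$, so Theorem~\ref{thm:uniform} shows that $\AmExp(X,G)$ divides $1\cdot\operatorname{lcm}\{1,2,3,4\}=12$; as $\AmExp(D)$ divides $\AmExp(X,G)$ by definition of the latter, (a) follows. Assertion (b) is immediate from Proposition~\ref{prop:Euler} applied to $\mathcal{O}(D)$. So the content is in (c), where I write $m:=\AmExp(D)$.

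For (c), set $f(k)=\chi(kD)=\sum_{i=0}^{3}a_i\binom{k}{i}$. First I would apply Lemma~\ref{lem:mdivf} to get $m\mid\gcd\{kf(k)\}$, and then Corollary~\ref{cor:mod_condition} (with $n=3$) to conclude
\begin{align*}
a_0+a_1&\equiv 0, & 2(a_1+a_2)&\equiv 0, & 3(a_2+a_3)&\equiv 0, & 4a_3&\equiv 0 \pmod m.
\end{align*}
Next I would identify the $a_i$ via Hirzebruch--Riemann--Roch on the threefold: expanding
\[
\chi(kD)=\tfrac16 k^3D^3-\tfrac14 k^2(D^2 \cdot K_X)+\tfrac1{12}k\,\bigl(D\cdot(K_X^2+c_2(X))\bigr)+1
\]
in the binomial basis yields $a_0=1$, $a_1=\chi(D)-1$, $a_2=D^3-\tfrac12 D^2 \cdot K_X$, and $a_3=D^3$. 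In particular $\tfrac12 D^2 \cdot K_X=a_3-a_2$ is an integer, so $D^2 \cdot K_X$ is automatically even; put $t:=\tfrac12 D^2 \cdot K_X\in\mathbb{Z}$. With these values the first congruence becomes $\chi(D)\equiv 0\pmod m$ (which reproves (b), and lets me discard $\chi(D)$ terms), while the remaining three become $2(D^3-t-1)\equiv 0$, $3(2D^3-t)\equiv 0$, and $4D^3\equiv 0$ modulo $m$.

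The final step is elementary number theory. Since $4D^3\equiv 0\pmod m$ we have $12D^3\equiv 4D^3\pmod{2m}$, so doubling $3(2D^3-t)\equiv 0\pmod m$ gives $4D^3\equiv 6t\pmod{2m}$, while doubling $2(D^3-t-1)\equiv 0\pmod m$ gives $4D^3\equiv 4t+4\pmod{2m}$; subtracting yields $2t\equiv 4\pmod{2m}$, that is $D^2 \cdot K_X\equiv 4\pmod{2m}$. Feeding this back into $4D^3\equiv 6t\pmod{2m}$ gives $4D^3\equiv 12\pmod{2m}$, hence $2D^3\equiv 6\pmod m$, completing (c). The one point requiring care --- and the step most likely to cause an error --- is bookkeeping the distinction between congruences modulo $m$ and modulo $2m$, which is forced on us by the denominators $2,4,12$ in Riemann--Roch; the safe route is to keep the halves symbolic (via $t$) as above rather than clearing denominators prematurely.
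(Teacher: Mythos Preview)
Your proof is correct and follows essentially the same approach as the paper: parts (a) and (b) are dispatched via Theorem~\ref{thm:uniform} and Proposition~\ref{prop:Euler} respectively, and part (c) is obtained by computing the binomial coefficients $a_i$ from Hirzebruch--Riemann--Roch, applying Corollary~\ref{cor:mod_condition}, and then taking integer linear combinations of the resulting congruences to isolate $D^2\cdot K_X$ and $2D^3$. Your introduction of $t=\tfrac12 D^2\cdot K_X$ and your care in distinguishing congruences modulo $m$ versus $2m$ make the bookkeeping slightly cleaner than the paper's version, but the substance is the same.
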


\begin{proof}
As before, the first two bounds follow from
Theorem~\ref{thm:uniform} and Proposition~\ref{prop:Euler}.
Using Hirzebruch-Riemann-Roch for threefolds, we find
\[
\chi(kD) = \frac{1}{6}(kD)^3 -\frac{1}{4}(kD)^2\cdot K_X
+\frac{1}{12}(kD)\cdot\left(K_X^2+c_2\right)+1.
\]
Viewing this as a polynomial in $k$ in the binomial basis,
we obtain the following coefficients
\begin{align*}
a_0 &= 1\\
a_1 &= \frac{1}{6}D^3 -\frac{1}{4}D^2\cdot K_X
+\frac{1}{12}D\cdot\left(K_X^2+c_2\right) \\
a_2 &= D^3-\frac{1}{2}D^2\cdot K_X\\
a_3 &= D^3
\end{align*}
as in \eqref{eq:f_coefs}.
Thus, $\AmExp(D)$ divides all of the following expressions:
\begin{align}
a_1+a_0&=
\frac{1}{6}D^3 -\frac{1}{4}D^2\cdot K_X
+\frac{1}{12}D\cdot\left(K_X^2+c_2\right)+1 \label{eq:3a10}\\
2(a_2+a_1)&=
\frac{7}{3}D^3 -\frac{3}{2}D^2\cdot K_X
+\frac{1}{6}D\cdot\left(K_X^2+c_2\right) \label{eq:3a21}  \\
3(a_3+a_2)&=
6D^3-\frac{3}{2}D^2\cdot K_X \label{eq:3a32}\\
4a_3&=
4D^3. \label{eq:3a43}
\end{align}
Let $m$ be a divisor of $\AmExp(D)$.
Subtracting twice \eqref{eq:3a10} from \eqref{eq:3a21},
and multiplying \eqref{eq:3a32} by $2$, we obtain the congruences
\begin{align}
\chi(D) &\equiv 0 \mod{m} \label{eq:3c1}\\
2D^3 -D^2\cdot K_X-2 &\equiv 0 \mod{m} \label{eq:3c2}\\
12D^3-3D^2\cdot K_X &\equiv 0 \mod{2m} \label{eq:3c3}\\
4D^3 &\equiv 0 \mod{m}  \label{eq:3c4}.
\end{align}
Taking twice the sum of \eqref{eq:3c2} and \eqref{eq:3c4}, then subtracting
\eqref{eq:3c3}, we find
\[
D^2\cdot K_X \equiv 4 \mod{2m}.
\]
Substituting this back into the congruences above, we find that
\[
4D^3 \equiv 0 \pmod{m} \textrm{ and }
2D^3 \equiv 6 \pmod{m}.
\]
Since $m$ divides $12$, the first congruence is redundant.
\end{proof}

\section{Numerical Amitsur group}
\label{sec:numerical}

Suppose $X$ is a smooth projective variety.
Let $G$ be a finite group acting on $\operatorname{Pic}(X)$
by group automorphisms such that $\chi(D)=\chi(g^\ast
D)$ for all $g \in G$ and $[D] \in
\operatorname{Pic}(X)$.
The reader should imagine that $G$ is $\PicIm(X,G)$
from above, although
we do not necessarily assume that $G$ comes from an action on $X$.

We use the following shorthand for orbit sums.
For $[D] \in \operatorname{Pic}(X)$, write
\[
\orbitsum{G}{D} := \sum_{[E] \in G\cdot [D]} [E]
\]
where $G\cdot [D]$ is the $G$-orbit of $[D]$ in $\operatorname{Pic}(X)$.
Alternatively,
\[
\orbitsum{G}{D} = \sum_{g \in G/H} g[D]
\]
where $H$ is the stabilizer of $[D]$ in $G$.
Observe that $\orbitsum{G}{D}$ is always in $\operatorname{Pic}(X)^G$
even though $[D]$ may not be.

\begin{definition}
The \emph{numerical Amitsur group}
$\operatorname{Am}^\chi(X,G)$ is the cokernel in the short exact sequence
\begin{equation} \label{eq:num_Am_def}
0 \to \operatorname{Pic}^\chi(X,G) \to \operatorname{Pic}(X)^G
\to \operatorname{Am}^\chi(X,G) \to 0 .
\end{equation}
where
$\operatorname{Pic}^\chi(X,G)$ is the subgroup
\begin{equation} \label{eq:numPic_def}
\operatorname{Pic}^\chi(X,G) :=
\left\langle \chi(D)\orbitsum{G}{D} \ \middle| \
[D] \in \operatorname{Pic}(X) \right\rangle .
\end{equation}
\end{definition}

A divisor class $[D]$ is \emph{numerically $G$-split}
if and only if $[D] \in \operatorname{Pic}^\chi(X,G)$.
(We anticipate that $\operatorname{Pic}^\chi(X,G)$ will also be useful in
the arithmetic setting, which explains the terminology ``split'' instead
of ``linearizable.'')

The notation $\operatorname{Pic}^\chi(X,G)$ suggests that it is a
``numerical'' version of $\operatorname{Pic}(X,G)$, but the latter is
not a subgroup of $\operatorname{Pic}(X)$.
Instead, $\operatorname{Pic}^\chi(X,G)$ is more analogous to the group
$\PicForget{X}{G}$
of isomorphism classes of linearizable line bundles
where the particular choice of linearization is not part of the data.

We now show, as stated in the introduction, that
the numerical Amitsur group is an ``upper bound'' for the
ordinary Amitsur group.

\begin{theorem} \label{thm:can_surj}
There is a canonical surjection
\[
\operatorname{Am}^\chi(X,\PicIm(X,G)) \to \operatorname{Am}(X,G)
\]
for every finite group $G$ acting on $X$.
\end{theorem}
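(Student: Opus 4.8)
Write $J := \PicIm(X,G)$. The target is a surjection $\operatorname{Am}^\chi(X,J) \to \operatorname{Am}(X,G)$. Since $\operatorname{Pic}(X)^G = \operatorname{Pic}(X)^J$ by definition of $J$, both groups are quotients of the same group $\operatorname{Pic}(X)^J = \operatorname{Pic}(X)^G$: namely $\operatorname{Am}^\chi(X,J) = \operatorname{Pic}(X)^J/\operatorname{Pic}^\chi(X,J)$ and $\operatorname{Am}(X,G) = \operatorname{Pic}(X)^G/\PicForget{X}{G}$, where $\PicForget{X}{G}$ is the image of $\operatorname{Pic}(X,G) \to \operatorname{Pic}(X)^G$, i.e.\ the subgroup of $G$-linearizable classes. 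So it suffices to show the containment of subgroups
\[
\operatorname{Pic}^\chi(X,J) \subseteq \PicForget{X}{G};
\]
then the surjection is the induced map on quotients and is manifestly canonical.

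**Key steps.** First I would reduce to checking that each generator $\chi(D)\,\orbitsum{J}{D}$ of $\operatorname{Pic}^\chi(X,J)$ lies in $\PicForget{X}{G}$, for an arbitrary $[D] \in \operatorname{Pic}(X)$ (not necessarily $J$-invariant). Second, observe that the orbit sum $\orbitsum{J}{D} = \sum_{[E]\in J\cdot[D]}[E]$ is $J$-invariant, hence $G$-invariant, so it is at least a legitimate element of $\operatorname{Pic}(X)^G$ on which $\partial$ is defined; we must show $\chi(D)\,\partial(\orbitsum{J}{D}) = 0$. Third — the crux — I would show that $\chi(\orbitsum{J}{D})$, the Euler characteristic of the invertible sheaf corresponding to the orbit sum, is divisible by $\chi(D)$, or more precisely that $\chi(D)\,\partial(\orbitsum{J}{D}) = 0$ follows from Proposition~\ref{prop:Euler}. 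The right way: for a $G$-invariant class $\mathcal{M}$, Proposition~\ref{prop:Euler} gives $\chi(\mathcal{M})\partial(\mathcal{M}) = 0$. Apply this with $\mathcal{M} = \orbitsum{J}{D}$; it gives $\chi(\orbitsum{J}{D})\,\partial(\orbitsum{J}{D}) = 0$, which is not quite what we want. Instead, I would argue that $\partial(\orbitsum{J}{D})$ is annihilated by $\chi(D)$ directly: since $\chi$ is $J$-invariant, each summand $[E]$ in the orbit has $\chi(E) = \chi(D)$, and one shows $\AmExp(\orbitsum{J}{D})$ divides $\chi(D)$ by applying Proposition~\ref{prop:Euler} to a single $G$-orbit representative. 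Concretely: let $H \subseteq G$ be the stabilizer of $[D]$ in $G$ (so $[G:H]$ is the orbit size); then $\orbitsum{J}{D} = \sum_{gH \in G/H} g^\ast[D]$, and one can use the lifting group of $[D]$ as an $H$-invariant sheaf together with the induced-representation construction: $H^0(X, \operatorname{ind}) \cong \bigoplus_{gH} g^\ast H^0(X,\mathcal{L}_D)$ carries a $G_{\orbitsum{J}{D}}$-action, and an Euler-characteristic count of a $G$-subrepresentation of dimension $\chi(D)$ inside the relevant cohomology (via Proposition~\ref{prop:coh_subspace}) forces $\chi(D)\,\partial(\orbitsum{J}{D}) = 0$.

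**Main obstacle.** The delicate point is the third step: relating $\partial(\orbitsum{J}{D})$ for the $G$-action on $X$ to the Euler characteristic $\chi(D)$ of a class $[D]$ that is only $H$-invariant for a subgroup $H \subsetneq G$. The natural mechanism is: the line bundle $\mathcal{L}$ with $[\mathcal{L}]$ an $H$-invariant class has a lifting group $H_\mathcal{L}$, and Proposition~\ref{prop:Euler} (with $H$ in place of $G$) gives $\chi(D)\,\partial_H([\mathcal{L}]) = 0$ in $H^2(H,\mathbb{C}^\times)$. One then needs a transfer/induction argument — or a direct construction of a $G$-linearization on $\chi(D)\cdot\orbitsum{J}{D}$ — to push this up to $G$. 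I expect the cleanest route is to build, from an $H$-linearization of $\mathcal{L}^{\otimes \chi(D)}$ (which exists since $\chi(D)\partial_H([\mathcal{L}]) = 0$), a $G$-linearization of the induced sheaf $\bigotimes_{gH \in G/H} g^\ast(\mathcal{L}^{\otimes\chi(D)})$ whose underlying class is exactly $\chi(D)\,\orbitsum{J}{D}$; the $G$-action permutes the tensor factors and uses the $H$-linearization on the stabilized factor, which is a standard Weil-restriction-type construction. Verifying the cocycle bookkeeping for this induced linearization is the only genuinely technical part; everything else is formal manipulation of the exact sequence~\eqref{eq:Leray} and its functoriality.
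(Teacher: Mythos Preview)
Your proposal is correct and follows essentially the same route as the paper. You reduce to showing each generator $\chi(D)\,\orbitsum{J}{D}$ is $G$-linearizable, observe that $\chi(D)[D]$ is $H$-linearizable for $H$ the stabilizer of $[D]$ (via Proposition~\ref{prop:Euler}), and then induce an $H$-linearization up to a $G$-linearization of the orbit sum --- exactly the paper's strategy. The only cosmetic difference is that the paper isolates the induction step as a standalone lemma (Lemma~\ref{lem:orbit_sum}) and realizes it by first $G$-linearizing the direct sum $\bigoplus_{gH} g^\ast\mathcal{L}$ and then passing to its determinant, rather than working directly with the tensor product as you suggest; the two constructions are equivalent, and the direct-sum-then-determinant route makes the ``cocycle bookkeeping'' you flag slightly more transparent. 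Your initial detour through $\chi(\orbitsum{J}{D})$ is unnecessary and you were right to abandon it.
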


\begin{proof}
Using \eqref{eq:Leray},
it suffices to show that every element of
$\operatorname{Pic}^\chi(X,G)$ is linearizable.
Suppose $[D]$ is in $\operatorname{Pic}^\chi(X,G)$.
Since tensor products of linearizable invertible sheaves are
linearizable, it suffices to assume that
\[
[D] = \chi(E)
\sum_{[F] \in G\cdot[E]} [F]
\]
for some divisor class $[E] \in \operatorname{Pic}(X)$.
Let $H$ be the stabilizer of $[E]$ in $\operatorname{Pic}(X)$.
Then $\chi(E)[E]$ is $H$-linearizable by
Proposition~\ref{prop:coh_subspace}.
Thus $[D]$ is $G$-linearizable by Lemma~\ref{lem:orbit_sum} below.
\end{proof}

\begin{lemma} \label{lem:orbit_sum}
Suppose $H$ is the stabilizer in $G$ of $[D] \in \operatorname{Pic(X)}$.
If $D$ is $H$-linearizable, then $\orbitsum{G}{D}$ is
$G$-linearizable.
\end{lemma}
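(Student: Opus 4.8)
The plan is to exhibit $\orbitsum{G}{D}$ explicitly as a line bundle obtained by a ``norm'' or ``induction'' construction from the $H$-linearizable bundle corresponding to $[D]$, and then check directly that this construction carries a $G$-linearization. First I would fix a transversal $g_1, \ldots, g_k$ for the left cosets $G/H$, so that $\orbitsum{G}{D} = \sum_{j=1}^k [g_j^\ast D]$ (using the action convention $g \mapsto (g^\ast)^{-1}$; I would be careful to get variance right, but the orbit sum is independent of the choice of representatives since the stabilizer is exactly $H$). Let $\mathcal{M}$ be an invertible sheaf representing $[D]$ equipped with an $H$-linearization, which exists by hypothesis. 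The candidate for a $G$-linearized representative of $\orbitsum{G}{D}$ is the tensor product $\bigotimes_{j=1}^k g_j^\ast \mathcal{M}$; the $H$-linearization of $\mathcal{M}$ should propagate to a $G$-linearization of this tensor product because permuting the cosets $g_j H$ by left multiplication by $g \in G$ permutes the factors $g_j^\ast \mathcal{M}$ up to the $H$-linearization data on the ``fractional'' part $g g_j = g_{\sigma(j)} h_j$ with $h_j \in H$.

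The cleanest way to make this precise, and the route I would actually take, is to invoke the functoriality of the exact sequence~\eqref{eq:Leray} together with a Shapiro-type / induction statement. Concretely: the class $[D] \in \operatorname{Pic}(X)^H$ lies in the image of $\operatorname{Pic}(X,H) \to \operatorname{Pic}(X)^H$ since $D$ is $H$-linearizable, i.e.\ $\partial_H([D]) = 0$. I want to conclude $\partial_G(\orbitsum{G}{D}) = 0$. One shows that the orbit-sum map $\operatorname{Pic}(X)^H \to \operatorname{Pic}(X)^G$, $[D] \mapsto \orbitsum{G}{D}$, is compatible with a corestriction (transfer) map $\operatorname{cor}: H^2(H, \mathbb{C}^\times) \to H^2(G, \mathbb{C}^\times)$ in the sense that $\partial_G \circ (\text{orbit sum}) = \operatorname{cor} \circ \partial_H$ when restricted appropriately — or, more elementarily, that the lifting group $G_{\orbitsum{G}{D}}$ is built from $H_{\mathcal{M}}$ by an induced-extension construction, so that a splitting of $1 \to \mathbb{C}^\times \to H_{\mathcal{M}} \to H \to 1$ yields a splitting of $1 \to \mathbb{C}^\times \to G_{\orbitsum{G}{D}} \to G \to 1$. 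Since $\partial_H([D]) = 0$, the former splits, hence so does the latter.

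The main obstacle will be the bookkeeping in the induced-extension (or transfer) argument: one must verify that the $H$-equivariant structure on $\mathcal{M}$ glues correctly across the coset representatives to give an honest $G$-equivariant structure on $\bigotimes_j g_j^\ast \mathcal{M}$, and in particular that the resulting splitting does not depend on the choices made. I expect this amounts to the standard verification that the tensor-induction of a representation (here, a $1$-dimensional twisted representation of $H_{\mathcal{M}}$) is well-defined, together with the observation that tensor-induction of the trivial-on-$\mathbb{C}^\times$-scalar character sends a genuine character of $H_{\mathcal{M}}$ to a genuine character of $G_{\orbitsum{G}{D}}$ restricting to the identity on the central $\mathbb{C}^\times$. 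Once that is set up, the conclusion is immediate, and since tensor products of linearizable sheaves are linearizable (already used in the proof of Theorem~\ref{thm:can_surj}), the lemma follows. Alternatively, if one prefers to avoid tensor-induction entirely, the same conclusion can be reached by applying Proposition~\ref{prop:coh_subspace} to a suitable cohomology group of the induced bundle, but the dimension count there is exactly $[G:H] \cdot \dim H^i(X, \mathcal{M})$ for the $H$-linearized $\mathcal{M}$, which recovers the same divisibility.
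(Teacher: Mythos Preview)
Your main approach is correct and is essentially the paper's proof: the paper forms the \emph{direct sum} $\mathcal{F}=\bigoplus_j t_j^\ast\mathcal{L}$, builds a $G$-linearization on $\mathcal{F}$ via the standard induced-representation construction on total spaces, and then takes the determinant to get $\orbitsum{G}{D}$. Your tensor product $\bigotimes_j g_j^\ast\mathcal{M}$ is exactly $\det\mathcal{F}$, so your ``tensor induction'' is the paper's ``ordinary induction followed by determinant'' with the two steps folded together; the cocycle bookkeeping $gg_j=g_{\sigma(j)}h_j$ you describe is precisely what the paper writes out. The corestriction framing you sketch is a valid alternative phrasing of the same computation.

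One caution: your final throwaway alternative via Proposition~\ref{prop:coh_subspace} does not work as stated. The induced bundle $\mathcal{F}$ has rank $[G:H]$, so its cohomology is not a module for the lifting group of the \emph{line bundle} $\orbitsum{G}{D}$; and the dimension $[G:H]\cdot\dim H^i(X,\mathcal{M})$ would at best give divisibility of the Amitsur period by that integer, not that the period is $1$. Drop that remark and the proof is fine.
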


\begin{proof}
Let $\mathcal{L}$ be the corresponding invertible sheaf.
Let $t_1,\ldots,t_s$ be a system of distinct representatives for
the left cosets in $G/H$
and define
\[
\mathcal{F} := \bigoplus_{i=1}^s t_i^\ast \mathcal{L} .
\]
If $\mathcal{F}$ has a linearization, then so must its determinant.
Observe that the determinant is $\orbitsum{G}{D}$.
Thus, it remains to exhibit a $G$-linearization on $\mathcal{F}$.
We do so using a construction inspired from the ``induced
representation'' from ordinary representation theory.

We change to the language of total spaces. 
Let $L$ (resp. $F$) be the total space of $\mathcal{L}$
(resp. $\mathcal{F}$) and let $\pi : L \to X$ be the projection.
Since $\mathcal{L}$ is $H$-linearizable,
there exists an action on $H$ on $L$ 
such that $\pi$ is $H$-equivariant.
Let $L^s$ be the $s$-fold product of $L$ (as a variety)
and let $p_i : L^s \to L$ denote the $i$th projection.
For each $g \in G$ and index $i$ there is a unique index $g(i)$ and
element $h_{g,i} \in H$
such that $gt_i = t_{g(i)}h_{g,i}$.
We define an action of $G$ on $L^s$ by requiring that
\[
p_i(g\cdot \ell) :=  h_{g,g^{-1}(i)}p_{g^{-1}(i)}(\ell)
\]
for each index $i$.
One checks that this gives a well-defined $G$-action
since the $H$-action is well-defined.

Observe that $F$ is the fibred product $L \times_X \cdots \times_X L$
for the morphisms $t_1 \circ \pi, \ldots, t_s \circ \pi : L \to X$.
Equivalently, $F$ is the subvariety of $L^s$ consisting of
elements $\ell \in L^s$ such that
\[
t_1(\pi(p_1(\ell))) = \cdots = t_s(\pi(p_s(\ell)))
\]
as elements of $X$.
Applying the $G$-action to $\ell \in F$, we see that
\begin{align*}
t_i(\pi(p_i(g\cdot\ell)))
&= t_ih_{g,g^{-1}(i)}(\pi(p_{g^{-1}(i)}(\ell)))\\
&= gt_{g^{-1}(i)}(\pi(p_{g^{-1}(i)}(\ell)))\\
&= gt_{i}(\pi(p_{i}(\ell)))
\end{align*}
for every index $i$.
This means that $g \cdot \ell \in L^s$ also represents an element
from $F$ for every $g$.
It also means that $G$ has an action on $F$ making the projection
$F \to X$ equivariant.
In other words, $\mathcal{F}$ has a $G$-linearization.
\end{proof}

The following theorem shows that, at least when $\operatorname{Pic}(X)$
is finitely generated, the numerical Amitsur group can actually be
computed.

\begin{theorem} \label{thm:Am_finite}
Suppose $\operatorname{Pic}(X)$ is finitely generated.
Suppose $G$ is a finite group acting on $\operatorname{Pic}(X)$
such that $\chi$ is $G$-invariant.
There exists a finite set $S \subset \operatorname{Pic}(X)$ such that
\[
\operatorname{Pic}^\chi(X,G) =
\left\langle \chi(D)\orbitsum{G}{D} \ \middle| \
[D] \in S \right\rangle .
\]
\end{theorem}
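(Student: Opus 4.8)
The plan is to reduce the infinite generating set for $\operatorname{Pic}^\chi(X,G)$ to a finite one by exploiting the polynomial nature of $\chi$ together with Proposition~\ref{prop:poised}. First I would fix a $\mathbb{Z}$-basis $[D_1],\ldots,[D_r]$ for the free part of $\operatorname{Pic}(X)$ and dispose of the torsion subgroup $T$: since $T$ is finite, the contributions $\chi(D)\orbitsum{G}{D}$ with $[D]$ ranging over any fixed set of coset representatives modulo the free part are finite in number up to the free-part ambiguity, so after the main argument below one simply throws in a finite set handling the torsion. Working additively, every $[D] \in \operatorname{Pic}(X)$ is (modulo torsion) of the form $\sum_i x_i [D_i]$ for $\mathbf{x} = (x_1,\ldots,x_r) \in \mathbb{Z}^r$, and by Snapper's theorem the map $\mathbf{x} \mapsto \chi(\sum_i x_i[D_i])$ is a numerical polynomial $g(\mathbf{x})$ of some degree $n$ (at most $\dim X$).

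The key observation is that the orbit sum $\orbitsum{G}{D}$ depends only on $\mathbf{x}$ in a controlled way. For a fixed subgroup $H \le G$, restrict attention to those $\mathbf{x}$ whose associated class $[D]$ has stabilizer exactly $H$ (or contained in $H$); on this locus $\orbitsum{G}{D} = \sum_{g \in G/H} g[D]$ is the image of $[D]$ under a fixed $\mathbb{Z}$-linear map $\sigma_H : \operatorname{Pic}(X) \to \operatorname{Pic}(X)^G$, namely $\sigma_H = \sum_{g \in G/H} g$. Thus the generators coming from such $[D]$ are exactly $g(\mathbf{x}) \cdot \sigma_H(\mathbf{x})$, and since $\sigma_H$ is linear, $\mathbf{x} \mapsto g(\mathbf{x})\sigma_H(\mathbf{x})$ is (componentwise) a numerical polynomial of degree $n+1$. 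Now apply Proposition~\ref{prop:poised} with $S = S_{n+1}$ the lattice simplex from Proposition~\ref{prop:simplex_poised}: the subgroup generated by $\{ g(\mathbf{x})\sigma_H(\mathbf{x}) \mid \mathbf{x} \in \mathbb{Z}^r \}$ equals the subgroup generated by the finitely many $\{ g(\mathbf{a})\sigma_H(\mathbf{a}) \mid \mathbf{a} \in S_{n+1} \}$. Doing this for each of the finitely many subgroups $H \le G$ and taking the union of the resulting finite sets (together with the finite torsion correction) yields the desired finite set $S$.

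The main obstacle I anticipate is bookkeeping around the stabilizer stratification: Proposition~\ref{prop:poised} as stated reconstructs a polynomial's values on all of $\mathbb{Z}^r$ from its values on $S$, but the function $\mathbf{x} \mapsto g(\mathbf{x})\orbitsum{G}{D(\mathbf{x})}$ is only piecewise polynomial — it agrees with $g(\mathbf{x})\sigma_H(\mathbf{x})$ only on the stabilizer-$H$ stratum, not globally. The clean fix is to not try to cover the actual generators directly, but rather to prove the two-sided inclusion: each true generator $\chi(D)\orbitsum{G}{D}$ with $\operatorname{stab}(D) = H$ lies in $\langle g(\mathbf{a})\sigma_H(\mathbf{a}) \mid \mathbf{a} \in S_{n+1}\rangle$ because that group, being generated by values of the \emph{global} polynomial $g(\mathbf{x})\sigma_H(\mathbf{x})$, contains $g(\mathbf{x})\sigma_H(\mathbf{x})$ for \emph{all} $\mathbf{x}$ (Proposition~\ref{prop:poised}), in particular for those $\mathbf{x}$ in the stratum; conversely each $g(\mathbf{a})\sigma_H(\mathbf{a})$ for $\mathbf{a} \in S_{n+1}$ need \emph{not} itself be a generator of $\operatorname{Pic}^\chi(X,G)$, so one must instead observe that $g(\mathbf{a})\sigma_H(\mathbf{a})$ is an integer linear combination of true generators — but this follows from the same Newton--Gregory identity run in the other direction, expressing $g(\mathbf{a})\sigma_H(\mathbf{a})$ via finitely many $g(\mathbf{x})\sigma_H(\mathbf{x})$ with $\operatorname{stab} = H$, \emph{provided} the stratum is Zariski-dense enough to contain the needed interpolation nodes. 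To sidestep even this, the simplest route is: enlarge $S$ so that for each $H$ it contains, for each $\mathbf{a} \in S_{n+1}$, a witness $\mathbf{x}$ with $\operatorname{stab}(D(\mathbf{x})) = H$ realizing the relevant relation; since only finitely many $(H, \mathbf{a})$ pairs occur, $S$ remains finite. I would present the argument in exactly this order — basis and torsion reduction, polynomiality of $g$ and linearity of $\sigma_H$, invocation of Proposition~\ref{prop:poised} per subgroup, then the density/witness argument to close the loop — flagging the stratification subtlety as the one place requiring care.
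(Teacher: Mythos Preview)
Your overall strategy---decompose by stabilizer subgroup $H\le G$, use that $\sigma_H=\sum_{g\in G/H}g$ is $\mathbb{Z}$-linear while $\chi$ is a numerical polynomial, then invoke Proposition~\ref{prop:poised}---is exactly the paper's. The difference is in how the stratification obstacle you correctly flag is resolved, and here the paper is cleaner while your proposed fix does not obviously work.

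The paper does not run Proposition~\ref{prop:poised} on all of $\operatorname{Pic}(X)\cong\mathbb{Z}^r$. Instead, for each $H$ it restricts to the sublattice $\operatorname{Pic}(X)^H$ and applies the interpolation argument there. The point is that on $\operatorname{Pic}(X)^H$ one always has $H\subseteq\operatorname{Stab}_G(D)$, and hence
\[
\sigma_H(D)=\sum_{g\in G/H}gD=[\operatorname{Stab}_G(D):H]\cdot\orbitsum{G}{D}
\]
is an \emph{integer multiple} of the true orbit sum. Consequently every $\chi(D)\sigma_H(D)$ with $D\in\operatorname{Pic}(X)^H$ lies in $\operatorname{Pic}^\chi(X,G)$ automatically---no witness or density argument is needed for the containment $\langle\chi(D)\orbitsum{G}{D}\mid D\in S_H\rangle\subseteq\operatorname{Pic}^\chi(X,G)$. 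For the reverse inclusion, a generator $\chi(D)\orbitsum{G}{D}$ with $\operatorname{Stab}_G(D)=K$ equals $\chi(D)\sigma_K(D)$, which by Proposition~\ref{prop:poised} applied on $\operatorname{Pic}(X)^K$ lies in the span of $\{\chi(E)\sigma_K(E)\mid E\in S_K\}$, each of which is in turn a multiple of $\chi(E)\orbitsum{G}{E}$. Taking $S=\bigcup_{H\le G}S_H$ finishes.

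By contrast, your workaround---reconstruct $g(\mathbf{a})\sigma_H(\mathbf{a})$ from values of $g(\mathbf{x})\sigma_H(\mathbf{x})$ at points $\mathbf{x}$ in the open stratum $\{\operatorname{Stab}=H\}$---would require that stratum to contain an integrally-poised set, and there is no reason it should (it can be empty, or miss the simplex $S_{n+1}$ entirely). Restricting the domain to $\operatorname{Pic}(X)^H$ from the outset dissolves the issue and makes the torsion bookkeeping unnecessary as well.
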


\begin{proof}
Let $H$ be a fixed subgroup of $G$ and consider
\[
Q_H :=
\left\langle \chi(D) \orbitsum{G}{D} \ \middle|\
D \in \operatorname{Pic}(X)^H
\right\rangle .
\]
The group $\operatorname{Pic}(X)^H$ is a finitely generated free abelian
group.
Thus, for each subgroup $H$, the restriction of $\chi$
to $\operatorname{Pic}(X)^H$ can be viewed as a polynomial of fixed degree
in a finite number of variables.
Thus, by Proposition~\ref{prop:simplex_poised} there exists a
finite set $S_H$ such that
\[
Q_H = \left\langle \chi(D) \orbitsum{G}{D} \ \middle|\
D \in S_H
\right\rangle
\]

Now observe that
\[
\operatorname{Pic}^\chi(X,G) = \sum_{H \le G} M_H
\]
where
\[
M_H :=
\left\langle \chi(D) \orbitsum{G}{D} \ \middle|\
D \in \operatorname{Pic}(X),\
\operatorname{Stab}_G([D]) = H
\right\rangle .
\]
for each subgroup $H \le G$.
Note that $M_H \subseteq Q_H$, but we do not have equality in
general.

If $[D] \in \operatorname{Pic}(X)$ and
$H$ is a subgroup of
$K=\operatorname{Stab}_G([D])$,
then
\[
\sum_{g \in G/H} g[D] = [K:H]\sum_{g \in G/K} g[D]
=[K:H] \orbitsum{G}{D}.
\]
From this, we know that $Q_H \subseteq \sum_{H \le K} M_K$
for all subgroups $H$ of $G$.
Thus,
\[
\sum_{H \le G} M_H
 = \sum_{H \le G} Q_H
\]
and we conclude that
\[
\operatorname{Pic}^\chi(X,G) = \sum_{H \le G}
\left\langle \chi(D) \orbitsum{G}{D} \ \middle|\
D \in S_H
\right\rangle.
\]
The set $S = \bigcup_{H \le G} S_H$ is the desired set of
generators.
\end{proof}

\begin{remark}
It is interesting to consider which properties
of $\operatorname{Am}(X,G)$ hold for
$\operatorname{Am}^\chi(X,G)$ as well.
In particular, the numerical Amitsur group is \emph{not} a birational invariant
even in the case where $G$ is trivial (consider
$\mathbb{P}^2$ and $\mathbb{P}^1 \times \mathbb{P}^1$).
Additionally, it is not clear whether the canonical bundle $K_X$
is numerically split for every $X$, even though it is always
linearizable.
\end{remark}

\section{Toric varieties}
\label{sec:toric}

Let $X$ be a smooth projective toric variety~\cite{CoxLitSch11Toric}. 
Suppose $T$ is the torus, $\Sigma$ is the fan, and
$M:=\operatorname{Hom}(T,\mathbb{C}^\times)$ is the character
lattice.
We have an exact sequence
\begin{equation} \label{eq:toric_Pic}
0 \to M \to \operatorname{TDiv}(X) \to \operatorname{Pic}(X) \to 0.
\end{equation}
where $\operatorname{TDiv}(X) \cong \mathbb{Z}^{\Sigma(1)}
\cong \mathbb{Z}^r$ is a free
abelian group with basis indexed by the rays
$\rho_1,\ldots,\rho_r$ of $\Sigma$.
Each ray corresponds to an irreducible $T$-invariant divisor
of $X$.
Let $D_1,\ldots,D_s \in \operatorname{TDiv}(X)$ be a system of distinct
representatives for the divisor classes in $\operatorname{Pic}(X)$
corresponding to these rays.
The relation between $r$ and $s$ is 
\[
n_1 + \dots + n_s = r
\]
where $n_i$ is the number of rays in each divisor class $[D_i]$.

From \cite{Cox95The-homogeneous}, we recall the Cox ring and its
connection to the automorphism group of $X$.
The Cox ring $\operatorname{Cox}(X)$ of $X$ is a
$\operatorname{Pic}(X)$-graded polynomial ring $\mathbb{C}[x_1,\ldots,x_r]$
where each $x_i$ has the grading of the
corresponding class in $\operatorname{Pic}(X)$.
For every divisor class $[D] \in \operatorname{Pic}(X)$, we have
\[
H^0(X,\mathcal{O}_X(D)) \cong \operatorname{Cox(X)}_{[D]}
\]
where $\operatorname{Cox(X)}_{[D]}$ denotes the homogeneous component
of the Cox ring of degree $[D]$.

There is an exact sequence
\[
1 \to S \to \widetilde{\operatorname{Aut}}(X) \to \operatorname{Aut}(X)
\to 1
\]
where $S =
\operatorname{Hom}(\operatorname{Pic}(X),\mathbb{C}^\times)$
is the torus dual to the Picard group
and $\widetilde{\operatorname{Aut}}(X)$ acts by polynomial automorphisms of
$\operatorname{Cox}(X)$ that are compatible with the grading.

\begin{lemma} \label{lem:toric_lifting}
Suppose $G$ is a finite group acting on $X$ and let
$\widetilde{G}$ be the pullback to $\widetilde{\operatorname{Aut}}(X)$.
Let $\mathcal{L}$ be an effective line bundle in $\operatorname{Pic}(X)^G$
and let $\operatorname{ev}_{\mathcal{L}} : S \to \mathbb{C}^\times$
be the corresponding homomorphism.
Then the extension defining the lifting group $G_{\mathcal{L}}$ is the image of
the extension defining $\widetilde{G}$ along the induced morphism
$(\operatorname{ev}_{\mathcal{L}})_\ast:
H^2(G,S) \to H^2(G,\mathbb{C}^\times)$.
\end{lemma}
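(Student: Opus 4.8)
The plan is to factor the construction of the lifting group $G_{\mathcal{L}}$ through the action on the space of global sections $H^0(X,\mathcal{L})$, which by Cox's description is the homogeneous component $\operatorname{Cox}(X)_{[\mathcal{L}]}$. First I would set up the relevant diagram: the group $\widetilde{\operatorname{Aut}}(X)$ acts on $\operatorname{Cox}(X)$ preserving the grading, so $\widetilde G$ acts on $\operatorname{Cox}(X)_{[\mathcal{L}]}$, and the subtorus $S = \operatorname{Hom}(\operatorname{Pic}(X),\mathbb{C}^\times)$ acts on this component through its character $[\mathcal{L}]$, i.e.\ through $\operatorname{ev}_{\mathcal{L}}$. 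Thus restricting the $\widetilde G$-action on $\operatorname{Cox}(X)_{[\mathcal{L}]}$ to $S$ factors through $\operatorname{ev}_{\mathcal{L}} : S \to \mathbb{C}^\times$ acting by scalars. Pushing the central extension
\[
1 \to S \to \widetilde G \to G \to 1
\]
along $\operatorname{ev}_{\mathcal{L}}$ yields a central extension of $G$ by $\mathbb{C}^\times$, and the content of the lemma is that this pushout extension is canonically isomorphic (as an extension of $G$ by $\mathbb{C}^\times$) to the one defining $G_{\mathcal{L}}$.

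The key steps, in order, are: (1) identify $\widetilde G$ with a group of automorphisms of the line bundle total space. The point is that an automorphism of $\operatorname{Cox}(X)$ compatible with the $\operatorname{Pic}(X)$-grading induces, for each class $[\mathcal{L}]$, an isomorphism of invertible sheaves lifting the corresponding automorphism of $X$; conversely the data of such lifts for all classes simultaneously (compatibly with tensor product) recovers a graded Cox-ring automorphism. This is essentially the statement that $\widetilde{\operatorname{Aut}}(X)$ is the automorphism group of the Cox construction, so it maps to each $G_{\mathcal{L}}$. (2) Show the induced map $\widetilde G \to G_{\mathcal{L}}$ sends $S \subset \widetilde G$ onto $\mathbb{C}^\times \subset G_{\mathcal{L}}$ exactly via $\operatorname{ev}_{\mathcal{L}}$: an element of $S$ acts on the fibers of $\mathcal{L}$, equivalently on $H^0(X,\mathcal{L}) = \operatorname{Cox}(X)_{[\mathcal{L}]}$, by the scalar $\operatorname{ev}_{\mathcal{L}}$ of its degree. (3) Conclude by the universal property of pushout: since we have a morphism of extensions over $\operatorname{id}_G$ from the $\widetilde G$-extension to the $G_{\mathcal{L}}$-extension whose restriction $S \to \mathbb{C}^\times$ is $\operatorname{ev}_{\mathcal{L}}$, the $G_{\mathcal{L}}$-extension is the pushout, i.e.\ its class in $H^2(G,\mathbb{C}^\times)$ is $(\operatorname{ev}_{\mathcal{L}})_\ast$ of the class of the $\widetilde G$-extension in $H^2(G,S)$.

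I expect step (1) to be the main obstacle: carefully matching the abstract Cox-ring automorphism group $\widetilde{\operatorname{Aut}}(X)$ with concrete bundle automorphisms requires knowing that a graded automorphism of $\operatorname{Cox}(X)$ acts on $H^0(X,\mathcal{L})$ in a way that genuinely comes from a fiberwise-linear automorphism of the total space covering the induced map on $X$ — rather than just a linear map on sections. For $\mathcal{L}$ effective (the hypothesis), $H^0(X,\mathcal{L})$ is nonzero and one can argue via the sheaf $\mathcal{O}_X(D)$ being generated in the Cox picture, but one must be slightly careful that the identification $H^0(X,\mathcal{O}_X(D)) \cong \operatorname{Cox}(X)_{[D]}$ is $\widetilde{\operatorname{Aut}}(X)$-equivariant and that this determines the lift of the automorphism to the total space uniquely. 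The remaining steps (2) and (3) are formal: step (2) is an unwinding of definitions of the $S$-action on graded pieces, and step (3) is the standard universal property of the pushout of a central extension, applied to the diagram assembled in the proof of Proposition~\ref{prop:coh_subspace}-style reasoning. One should also note that effectivity is used only to guarantee $H^0 \neq 0$; alternatively one could phrase everything in terms of the total space directly and drop the hypothesis, but since the intended applications (Fano toric varieties) have plenty of effective classes, keeping the hypothesis is harmless.
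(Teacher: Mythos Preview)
Your proposal is correct and follows essentially the same approach as the paper: both arguments construct the morphism of extensions by observing that $\widetilde{G}$ acts on $\operatorname{Cox}(X)_{[\mathcal{L}]}\cong H^0(X,\mathcal{L})$ with $S$ acting via $\operatorname{ev}_{\mathcal{L}}$, and effectivity ensures this space is nonzero. The paper's proof is considerably more terse---it simply writes down the commutative diagram and says ``the result follows''---whereas you have correctly identified and unpacked the one nontrivial point (your step~(1)), namely that the graded-automorphism action on $\operatorname{Cox}(X)_{[\mathcal{L}]}$ genuinely lands in $G_{\mathcal{L}}$ rather than merely in $\operatorname{GL}(H^0)$.
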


\begin{proof}
This amounts to proving that there is a commutative diagram with exact rows
\[
\xymatrix{
1 \ar[r] &
S \ar[r] \ar[d]^{\operatorname{ev}_{\mathcal{L}}} &
\widetilde{G} \ar[r] \ar[d] &
G \ar[r] \ar@{=}[d] &
1 \\
1 \ar[r] &
\mathbb{C}^\times \ar[r] &
G_{\mathcal{L}} \ar[r] &
G \ar[r] &
1
}
\]
Since $\mathcal{L}$ is $G$-invariant, the action of $\widetilde{G}$
leaves stable the homogeneous component $\operatorname{Cox}(X)_{[\mathcal{L}]}$.
Since $\mathcal{L}$ is effective, $\operatorname{Cox}(X)_{[\mathcal{L}]}$ is
non-zero
and $\lambda \in S$ acts on $v \in \operatorname{Cox}(X)_{[\mathcal{L}]}$ via
$\lambda \cdot v =
\operatorname{ev}_{\mathcal{L}}(\lambda)v$.
Since $\operatorname{Cox}(X)_{[\mathcal{L}]} \cong H^0(X,\mathcal{L})$,
the result follows.
\end{proof}

From \cite[Theorem 4.5]{Duncan16}, the morphism
$\alpha : \operatorname{Aut(X)} \to \PicIm(X)$
has a splitting.  Thus, we have the description
\[
\widetilde{\operatorname{Aut}}(X) \cong
U \rtimes \left(\prod_{i=1}^s \operatorname{GL}_{n_i}(\mathbb{C}) \right) \rtimes
\PicIm(X)
\]
where $U$ is a unipotent group.
Each
$\operatorname{GL}_{n_i}(\mathbb{C})$ acts on the linear span $V_i$ of the
$n_i$ variables $\{x_j\}$ with degree $[D_i]$.
The group $\PicIm(X)$ acts by permuting the subspaces $\{V_i\}$.
We have the description
\[
\operatorname{Aut}(\Sigma) = \left(\prod_{i=1}^s S_{n_i}\right) \rtimes
\PicIm(X)
\]
where each $S_{n_i}$ permutes the variables within each $V_i$.
Taking the dual of \eqref{eq:toric_Pic}
we obtain an exact sequence 
\[
1 \to S \to \widetilde{T} \to T \to 1
\]
of tori where $\widetilde{T} \cong \left(\mathbb{C}^\times\right)^r$ acts
by scalar multiplication on each $x_i$.

\begin{definition}
Suppose $J$ is a finite subgroup of $\PicIm(X)$.  Define
\[
\operatorname{Pic}^T(X,J) := \langle n_i \orbitsum{J}{[D_i]} \mid\ 0 \le
i \le s \rangle \subseteq \operatorname{Pic}(X)^J,
\]
where we recall that $n_i$ is the number of rays in the class $[D_i]$,
and define
\[
\operatorname{Am}^T(X,J) :=
\frac{\operatorname{Pic}(X)^J}{\operatorname{Pic}^T(X,J)}.
\]
\end{definition}

\begin{lemma} \label{lem:toric_coh_char}
Let $J$ be a subgroup of $\PicIm(X)$ and let
$W$ be the preimage of $J$ in $\operatorname{Aut}(\Sigma)$.
There exist isomorphisms
\begin{equation} \label{eq:cohom_interp}
H^1(W,M) \cong \operatorname{coker} \left(
\operatorname{TDiv}(X)^W
\to \operatorname{Pic}(X)^J \right) \cong
\operatorname{Am}^T(X,J).
\end{equation}
\end{lemma}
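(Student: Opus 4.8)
The plan is to extract both isomorphisms from the single short exact sequence \eqref{eq:toric_Pic}, namely $0 \to M \to \operatorname{TDiv}(X) \to \operatorname{Pic}(X) \to 0$, by taking $W$-cohomology. Since $W$ is the preimage of $J$ in $\operatorname{Aut}(\Sigma) = \left(\prod_{i=1}^s S_{n_i}\right) \rtimes \PicIm(X)$, it acts compatibly on all three terms: on $\operatorname{TDiv}(X) \cong \mathbb{Z}^{\Sigma(1)}$ by permuting rays, on $M$ as a sublattice, and on $\operatorname{Pic}(X)$ through the quotient map $W \to J$. The long exact sequence in group cohomology reads
\[
0 \to M^W \to \operatorname{TDiv}(X)^W \to \operatorname{Pic}(X)^J \to H^1(W,M) \to H^1(W,\operatorname{TDiv}(X)).
\]
Here I use that $\operatorname{Pic}(X)^W = \operatorname{Pic}(X)^J$ since $W$ acts on $\operatorname{Pic}(X)$ through $J$. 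The first step is to show that $H^1(W,\operatorname{TDiv}(X))$ vanishes. This is where the structure of $\operatorname{TDiv}(X)$ as a permutation module enters: $\operatorname{TDiv}(X)$ decomposes as a direct sum, over the divisor classes $[D_i]$, of the permutation modules $\mathbb{Z}[W/W_i]$ where $W_i$ is the stabilizer of a chosen ray in class $[D_i]$. By Shapiro's lemma, $H^1(W,\mathbb{Z}[W/W_i]) \cong H^1(W_i,\mathbb{Z}) = \operatorname{Hom}(W_i,\mathbb{Z}) = 0$ since $W_i$ is finite. Hence $H^1(W,\operatorname{TDiv}(X)) = 0$, and the long exact sequence gives the first isomorphism
\[
H^1(W,M) \cong \operatorname{coker}\left(\operatorname{TDiv}(X)^W \to \operatorname{Pic}(X)^J\right).
\]

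The second step is to identify this cokernel with $\operatorname{Am}^T(X,J) = \operatorname{Pic}(X)^J/\operatorname{Pic}^T(X,J)$. It suffices to show that the image of $\operatorname{TDiv}(X)^W$ in $\operatorname{Pic}(X)^J$ equals $\operatorname{Pic}^T(X,J) = \langle n_i \orbitsum{J}{[D_i]} \mid 1 \le i \le s\rangle$. A $W$-invariant toric divisor is an integer combination of the sums $\sum_{\rho}\rho$ taken over all rays $\rho$ in a single $W$-orbit of rays; since $W$ surjects onto $J$ and acts within each class $[D_i]$ through the full symmetric group $S_{n_i}$, the $W$-orbits of rays are exactly the preimages in $\Sigma(1)$ of the $J$-orbits of the classes $[D_i]$. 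The sum of rays over such an orbit, pushed to $\operatorname{Pic}(X)$, is precisely $n_i \orbitsum{J}{[D_i]}$ (each class in the $J$-orbit of $[D_i]$ is hit by its $n_i$ rays). Therefore the image of $\operatorname{TDiv}(X)^W$ is generated exactly by these elements, giving the second isomorphism. One should double-check that the $W$-action genuinely induces the full $S_{n_i}$ on the rays of each class and that distinct $[D_i]$ in the same $J$-orbit really do have the same number $n_i$ of rays — this follows because $J$ permutes the classes by automorphisms of $\operatorname{Pic}(X)$ induced by symmetries of $\Sigma$, which preserve ray multiplicities.

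The main obstacle I anticipate is the bookkeeping in the second step: carefully matching $W$-orbits of rays with $J$-orbits of divisor classes, and verifying that the pushforward to $\operatorname{Pic}(X)$ of an orbit-sum of rays is $n_i$ times the orbit-sum $\orbitsum{J}{[D_i]}$ rather than the orbit-sum itself or some other multiple. The cohomological input (vanishing of $H^1(W,\operatorname{TDiv}(X))$ via Shapiro's lemma) is routine once the permutation-module structure is set up. I would also remark that the composite isomorphism is natural enough to be compatible with the functoriality used elsewhere, though for this lemma only the existence of the isomorphisms is asserted.
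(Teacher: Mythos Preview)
Your proposal is correct and follows essentially the same route as the paper: vanishing of $H^1(W,\operatorname{TDiv}(X))$ via Shapiro's lemma for the permutation lattice, then identifying the image of $\operatorname{TDiv}(X)^W$ in $\operatorname{Pic}(X)^J$ with $\operatorname{Pic}^T(X,J)$ by matching $W$-orbits of rays with $J$-orbits of classes (the paper does this in two steps, first taking $H=\prod S_{n_i}$-invariants and then $J$-invariants, but the content is the same). One small wording issue: your decomposition of $\operatorname{TDiv}(X)$ into summands $\mathbb{Z}[W/W_i]$ should be indexed by $W$-orbits of rays (equivalently $J$-orbits of the classes $[D_i]$), not by the individual classes $[D_i]$, since the span of rays in a single class is not $W$-stable unless $J$ fixes that class; this does not affect the argument, as all you need is that $\operatorname{TDiv}(X)$ is a permutation $W$-lattice.
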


\begin{proof}
Observe that $\operatorname{TDiv}(X)$ is a permutation $W$-lattice;
in other words, $W$ acts by permutations of a basis.
By Shapiro's Lemma, this means $H^1(W,\operatorname{TDiv}(X))=0$.
We apply group cohomology $H^i(W,-)$ to \eqref{eq:toric_Pic} and obtain
\[
0 \to M^W \to \operatorname{TDiv}(X)^W \to \operatorname{Pic}(X)^J
\to H^1(W,M) \to 0 .
\]
This establishes the first isomorphism.

Let $H=\prod_{i=1}^s S_{n_i}$ denote the kernel of $W \to J$.
The group $H$ acts on the set of rays $\{\rho_i\}$ by all permutations
that leave invariant the divisor classes.
Therefore, the $H$-orbit of $D_i$ is equal to the set of basis
elements of $\operatorname{TDiv}(X)$ with degree $[D_i]$.
Consequently,
\[
\beta_1 := \orbitsum{H}{D_1},\ \ldots,\ \beta_s := \orbitsum{H}{D_s}
\]
is a basis for $\operatorname{TDiv}(X)^H$.
Therefore we have
\[
\operatorname{TDiv}(X)^W = \left(\operatorname{TDiv}(X)^H\right)^J
\]
where $J$ acts by permutations on $\beta_1,\ldots,\beta_s$.

The divisor classes are all distinct, so the stabilizer of $J$
acting on $\beta_i$ is the same as that of $[D_i]$ in $\operatorname{Pic}(X)$.
Therefore
\[
[\orbitsum{W}{D_i}] = [\orbitsum{J}{\beta_i}]
=\orbitsum{J}{[\beta_i]} = n_i\orbitsum{J}{[D_i]} .
\]
We conclude that the image of $\operatorname{TDiv}(X)^W$
is equal to $\operatorname{Pic}^T(X,J)$.
The second isomorphism now follows.
\end{proof}

\begin{theorem} \label{thm:toric}
For any finite subgroup $G$ of $\operatorname{Aut}(X)$
such that $\PicIm(X,G)=J$, there exists a canonical surjection
$\operatorname{Am}^T(X,J) \to \operatorname{Am}(X,G)$.
\end{theorem}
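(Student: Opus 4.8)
\medskip

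The plan is to realize the desired map as a quotient of the identity on $\operatorname{Pic}(X)^J$, which reduces the theorem to an inclusion of subgroups; that inclusion will follow by building explicit $G$-linearizations out of the Cox ring.

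\emph{Reduction.} Since $\PicIm(X,G)=J$ we have $\operatorname{Pic}(X)^G=\operatorname{Pic}(X)^J$, and by Lemma~\ref{lem:toric_coh_char} the subgroup $\operatorname{Pic}^T(X,J)$ is exactly the image of $\operatorname{TDiv}(X)^W$ in $\operatorname{Pic}(X)^J$. As $\operatorname{Am}(X,G)=\operatorname{Pic}(X)^G/\PicForget{X}{G}$, the identity of $\operatorname{Pic}(X)^J$ descends to a canonical surjection $\operatorname{Am}^T(X,J)\to\operatorname{Am}(X,G)$ as soon as $\operatorname{Pic}^T(X,J)\subseteq\PicForget{X}{G}$. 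Because linearizable classes form a subgroup, it suffices to treat each generator $n_i\orbitsum{J}{[D_i]}$. Put $F_i:=\sum_{\rho\,:\,[D_\rho]=[D_i]}D_\rho$, the sum of the prime $T$-divisors of class $[D_i]$, so $[F_i]=n_i[D_i]$; since $G$ acts on the torsion-free group $\operatorname{Pic}(X)$ through $J$ and the classes $[D_{i'}]$ are distinct, the $G$-orbit of $[F_i]$ is $\{[F_{i'}]:i'\in J\cdot i\}$ and $\orbitsum{G}{[F_i]}=n_i\orbitsum{J}{[D_i]}$. Applying Lemma~\ref{lem:orbit_sum} with $H=\operatorname{Stab}_G([F_i])=\operatorname{Stab}_G([D_i])=:K_i$, it remains to prove that $[F_i]$ is $K_i$-linearizable, i.e.\ that $n_i\,\partial[\mathcal O_X(D_i)]=0$ in $H^2(K_i,\mathbb C^\times)$.

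\emph{The key step.} By Lemma~\ref{lem:toric_lifting}, applied to the effective class $[D_i]\in\operatorname{Pic}(X)^{K_i}$, the extension defining the lifting group $(K_i)_{\mathcal O(D_i)}$ is obtained from the extension $1\to S\to\widetilde{K_i}\to K_i\to1$ (the pullback of $K_i$ to $\widetilde{\operatorname{Aut}}(X)$) by pushout along $\operatorname{ev}_{\mathcal O(D_i)}\colon S\to\mathbb C^\times$. Now $\widetilde{\operatorname{Aut}}(X)$ acts on $\operatorname{Cox}(X)=\mathbb C[x_1,\dots,x_r]$ by graded ring automorphisms; since each $x_\rho$ has nonzero degree, its image under any such automorphism is homogeneous of nonzero degree and hence lies in the irrelevant ideal $\mathfrak m=(x_1,\dots,x_r)$, so $\widetilde{\operatorname{Aut}}(X)$ preserves $\mathfrak m$ and $\mathfrak m^2$ and therefore acts on the $\operatorname{Pic}(X)$-graded vector space $\mathfrak m/\mathfrak m^2=\bigoplus_\rho\mathbb C\,\bar x_\rho$ compatibly with the grading. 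The subgroup $\widetilde{K_i}$ fixes the class $[D_i]$, so it stabilizes the homogeneous component $(\mathfrak m/\mathfrak m^2)_{[D_i]}$, which has dimension exactly $n_i$ (it is spanned by the classes of the $n_i$ variables of degree $[D_i]$) and on which $S$ acts through the scalar character $\operatorname{ev}_{\mathcal O(D_i)}$. Hence $(\mathfrak m/\mathfrak m^2)_{[D_i]}$ descends to an $n_i$-dimensional representation of the pushout $(K_i)_{\mathcal O(D_i)}$ on which its central $\mathbb C^\times$ acts by scalars. Exactly as in the proof of Proposition~\ref{prop:coh_subspace}, the one-dimensional representation $\Lambda^{n_i}(\mathfrak m/\mathfrak m^2)_{[D_i]}$, on which $\mathbb C^\times$ acts by $x\mapsto x^{n_i}$, produces a retraction and hence a splitting of the extension representing $n_i\,\partial[\mathcal O(D_i)]=\partial[\mathcal O(F_i)]$. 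Thus $[F_i]$ is $K_i$-linearizable, completing the argument.

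\emph{Anticipated difficulty.} The substantive work lies in the bookkeeping of the key step: checking that $\widetilde{K_i}$ really does stabilize $(\mathfrak m/\mathfrak m^2)_{[D_i]}$ — which requires matching the $\alpha$-convention for the $\operatorname{Aut}(X)$-action on $\operatorname{Pic}(X)$ with the way $\widetilde{\operatorname{Aut}}(X)$ permutes the graded pieces of $\operatorname{Cox}(X)$ — and that this component has dimension $n_i$ exactly rather than merely at least $n_i$, for which one uses that $\operatorname{Pic}(X)$ is free and that a prime $T$-divisor on a complete toric variety is never principal. Everything else is formal once the Cox-ring description of $\widetilde{\operatorname{Aut}}(X)$ is unwound.
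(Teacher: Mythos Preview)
Your proof is correct and shares the paper's overall architecture: reduce to the inclusion $\operatorname{Pic}^T(X,J)\subseteq\PicForget{X}{G}$, use Lemma~\ref{lem:orbit_sum} to pass from the generator $n_i\orbitsum{J}{[D_i]}$ to the linearizability of $n_i[D_i]$ for the stabilizer $K_i$, and finish by exhibiting an $n_i$-dimensional representation of the lifting group $(K_i)_{\mathcal{O}(D_i)}$ on which the central $\mathbb{C}^\times$ acts by scalars.

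The genuine difference is in how that last representation is produced. The paper observes that $\widetilde{G}$, being an extension of a finite group by a torus, is reductive, and conjugates it by an element of the unipotent radical $U$ into $R=\bigl(\prod_i\operatorname{GL}_{n_i}(\mathbb{C})\bigr)\rtimes\PicIm(X)$; once inside $R$, the span $V_i$ of the Cox variables of degree $[D_i]$ is visibly stable inside $H^0(X,\mathcal{O}(D_i))$, and Proposition~\ref{prop:coh_subspace} applies verbatim. You instead pass to $\mathfrak{m}/\mathfrak{m}^2$, which kills the unipotent contributions automatically and isolates the $n_i$-dimensional piece without any conjugation or appeal to reductivity. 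The cost is that $(\mathfrak{m}/\mathfrak{m}^2)_{[D_i]}$ is not literally a subspace of any $H^i(X,\mathcal{O}(D_i))$, so Proposition~\ref{prop:coh_subspace} does not apply as stated; you correctly sidestep this by invoking only its $\Lambda^{n_i}$ argument, which needs merely a representation with the right central character. Your route is arguably cleaner in that it avoids the Levi-type step entirely, while the paper's route has the virtue of staying inside the cohomology groups where Proposition~\ref{prop:coh_subspace} is formulated.
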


\begin{proof}
Both groups are quotients of $\operatorname{Pic}(X)^J$
so it suffices to show that
\[
\operatorname{Pic}^T(X,J) \subseteq \PicForget{X}{G} .
\]
Thus, we only need to show that
each $n_i\orbitsum{J}{[D_i]}$ is $G$-linearizable.

Let $\widetilde{G}$ be the preimage of $G$ in
$\widetilde{\operatorname{Aut}}(X)$.
Note that $\widetilde{G}$ is an extension of $G$ by the torus $S$.
Since $G$ is finite, $\widetilde{G}$ is reductive.
Thus, we may conjugate by an element of $U$ to ensure
$\widetilde{G}$ is contained in
\[
R := \left(\prod_{i=1}^s \operatorname{GL}_{n_i}(\mathbb{C}) \right)
\rtimes \PicIm(X).
\]
(Alternatively, conjugation by an element of $U$ amounts to a different choice
of torus $T$.)

Observe that $R$ acts on $\operatorname{Pic(X)}$ through
$\operatorname{Aut}(X)$.
Let $\widetilde{H}$ be the stabilizer in $\widetilde{G}$ of $[D_i]$
and let $H:=\widetilde{H}/S$ be its image in $\operatorname{Aut}(X)$.
Since $R$ acts on the Cox ring, we see that $\widetilde{H}$ factors
through the lifting group $H_{[D_i]}$ by Lemma~\ref{lem:toric_lifting}.
Observe that $\widetilde{H}$ leaves invariant the subspace $V_i$ of
$\operatorname{Cox}(X)_{[D_i]}$ spanned by the variables $x_j$ with
degree $[D_i]$.
Therefore, there is a $n_i$-dimensional subrepresentation of the lifting
group $H_{[D_i]}$ in $H^0(X,\mathcal{O}_X(D_i))$.
Therefore, $n_i[D_i]$ is $H$-linearizable by
Proposition~\ref{prop:coh_subspace}.

By Lemma~\ref{lem:orbit_sum},
we conclude that $n_i\orbitsum{G}{[D_i]}$ is $G$-linearizable.
Since $J$ is simply the image of $G$, we have
$n_i\orbitsum{G}{[D_i]}=n_i\orbitsum{J}{[D_i]}$
and the result is proven.
\end{proof}

\begin{proposition} \label{prop:AmT_optimal}
For every subgroup $J$ of $\PicIm(X)$,
there exists a finite subgroup $G$ of $\operatorname{Aut}(X)$
such that $\PicIm(X,G)=J$ and
$\operatorname{Am}^T(X,J) \cong \operatorname{Am}(X,G)$.
\end{proposition}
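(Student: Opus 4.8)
The plan is to keep the surjection of Theorem~\ref{thm:toric} and choose $G$ so that it becomes an isomorphism. Write $M$ for the character lattice, and let $W$ be the preimage of $J$ in $\operatorname{Aut}(\Sigma)=(\prod_i S_{n_i})\rtimes\PicIm(X)$. Recall from (the proof of) Lemma~\ref{lem:toric_coh_char} that $\operatorname{Pic}^T(X,J)$ is the image of $\operatorname{TDiv}(X)^W$ in $\operatorname{Pic}(X)$, and that $\operatorname{Am}^T(X,J)\cong H^1(W,M)$, a finite group. Since $\operatorname{Aut}(\Sigma)$ acts on $X$ and normalizes the torus $T$, we have $T\rtimes\operatorname{Aut}(\Sigma)\subseteq\operatorname{Aut}(X)$, and I take
\[
G:=T[N]\rtimes W\ \subseteq\ T\rtimes\operatorname{Aut}(\Sigma)\ \subseteq\ \operatorname{Aut}(X),
\]
where $T[N]$ is the $N$-torsion subgroup of $T$ and $N$ is any positive multiple of $|W|$ (for instance $N=|W|$, or a multiple of the exponent of $H^1(W,M)$). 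This $G$ is finite; moreover $T[N]$ acts trivially on $\operatorname{Pic}(X)$ and $W$ acts through $J$, so $\PicIm(X,G)=J$. By Theorem~\ref{thm:toric} we already have $\operatorname{Pic}^T(X,J)\subseteq\PicForget{X}{G}$, so it suffices to prove the reverse inclusion $\PicForget{X}{G}\subseteq\operatorname{Pic}^T(X,J)$; then $\operatorname{Am}(X,G)=\operatorname{Pic}(X)^J/\PicForget{X}{G}=\operatorname{Pic}^T(X,J)\cong\operatorname{Am}^T(X,J)$ as desired.

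The tool is the standard toric identification $\operatorname{Pic}_T(X)=\operatorname{TDiv}(X)$, under which the forgetful map $\operatorname{Pic}_T(X)\to\operatorname{Pic}(X)$ is the surjection in \eqref{eq:toric_Pic} and $W$ acts on $\operatorname{TDiv}(X)$ by permuting the rays. Restricting $T$-linearizations to $T[N]$ identifies $\operatorname{Pic}_{T[N]}(X)$ with $\operatorname{TDiv}(X)/NM$, and reducing \eqref{eq:toric_Pic} modulo $NM$ yields a $W$-equivariant short exact sequence $0\to M/NM\to\operatorname{Pic}_{T[N]}(X)\to\operatorname{Pic}(X)\to0$. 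Passing to $W$-invariants in these two sequences, the images of $\operatorname{TDiv}(X)^W$ and of $\operatorname{Pic}_{T[N]}(X)^W$ in $\operatorname{Pic}(X)^J$ are exactly the kernels of the connecting maps $\partial_\infty\colon\operatorname{Pic}(X)^J\to H^1(W,M)$ and $\partial_N\colon\operatorname{Pic}(X)^J\to H^1(W,M/NM)$. By naturality of connecting homomorphisms applied to the reduction-mod-$NM$ morphism of these exact sequences, $\partial_N=\rho_N\circ\partial_\infty$, where $\rho_N\colon H^1(W,M)\to H^1(W,M/NM)$ is induced by $M\to M/NM$; and the Bockstein sequence attached to $M\xrightarrow{\,\cdot N\,}M$ identifies $\ker\rho_N$ with $N\cdot H^1(W,M)$, which vanishes by the choice of $N$. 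Hence $\rho_N$ is injective, so $\ker\partial_N=\ker\partial_\infty$, i.e.\ $\operatorname{im}\!\big(\operatorname{Pic}_{T[N]}(X)^W\to\operatorname{Pic}(X)\big)=\operatorname{Pic}^T(X,J)$.

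To conclude: if $\mathcal L\in\operatorname{Pic}(X)^J$ is $G$-linearizable then it is in particular $(T[N]\rtimes W)$-linearizable, and by the usual functoriality of equivariant Picard groups for the normal subgroup $T[N]\trianglelefteq T[N]\rtimes W$ (the image of $\operatorname{Pic}_{T[N]\rtimes W}(X)\to\operatorname{Pic}_{T[N]}(X)$ lies in the $W$-invariants), the class of $\mathcal L$ lies in $\operatorname{im}\!\big(\operatorname{Pic}_{T[N]}(X)^W\to\operatorname{Pic}(X)\big)=\operatorname{Pic}^T(X,J)$. This gives $\PicForget{X}{G}\subseteq\operatorname{Pic}^T(X,J)$ and finishes the proof. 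I expect the main obstacle to be the bookkeeping in the middle paragraph: identifying $\operatorname{Pic}_{T[N]}(X)$ with $\operatorname{TDiv}(X)/NM$ $W$-equivariantly and, above all, matching the obstructions to $W$-invariant $T$- and $T[N]$-linearizations with the connecting maps of \eqref{eq:toric_Pic} and its mod-$NM$ reduction, so that naturality of connecting homomorphisms applies cleanly; once that is set up, the vanishing of $\ker\rho_N$ is immediate. A minor separate point to record is that $T\rtimes\operatorname{Aut}(\Sigma)$ genuinely embeds in $\operatorname{Aut}(X)$, which follows from the description of $\widetilde{\operatorname{Aut}}(X)$ recalled above since $\operatorname{Aut}(\Sigma)\cap S=1$.
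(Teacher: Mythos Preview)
Your proof is correct and follows essentially the same strategy as the paper's: the same group $G=T[N]\rtimes W$ with $N$ a multiple of $|W|$, the same reduction to the inclusion $\PicForget{X}{G}\subseteq\operatorname{Pic}^T(X,J)$, and the same cohomological heart---the injectivity of $H^1(W,M)\to H^1(W,M/NM)$ coming from the fact that $|W|$ annihilates $H^1(W,M)$.

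The packaging differs in a way worth noting. You work on the ``Picard side'': you invoke the identification $\operatorname{Pic}_{T[N]}(X)\cong\operatorname{TDiv}(X)/NM$, compare the connecting maps $\partial_\infty$ and $\partial_N$ directly, and then use that the restriction $\operatorname{Pic}_{T[N]\rtimes W}(X)\to\operatorname{Pic}_{T[N]}(X)$ lands in $W$-invariants. The paper instead works on the dual ``lifting group'' side: it builds the commutative diagram of extensions~\eqref{eq:big_one}, applies $\operatorname{Hom}(-,\mathbb{C}^\times)$ to obtain~\eqref{eq:big_two}, and argues with extension classes in $\operatorname{Ext}^1_{\mathbb{Z}W}(\mathbb{Z},-)$. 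Your diagram of connecting maps is precisely the cohomology of the paper's diagram~\eqref{eq:big_two}, and your ``restriction lands in $W$-invariants'' is exactly what the paper extracts from the splitting $G_{\mathcal{L}}\cong H_{\mathcal{L}}\rtimes W$. Your version is more streamlined if one is willing to quote the standard facts about equivariant Picard groups; the paper's version is more self-contained, making the role of the Cox-ring splitting $\operatorname{Aut}(\Sigma)\hookrightarrow\widetilde{\operatorname{Aut}}(X)$ (via Lemma~\ref{lem:toric_lifting}) explicit rather than hiding it inside the identification $\operatorname{Pic}_{T[N]}(X)\cong\operatorname{TDiv}(X)/NM$ and the $W$-equivariance thereof. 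The point you flag yourself---that this identification is genuinely $W$-equivariant and compatible with the forgetful maps---is exactly where the paper spends its effort, so your instinct about where the bookkeeping lives is accurate.
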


\begin{proof}
Let $W$ be the preimage of $J$ in $\operatorname{Aut}(\Sigma)$
and let $w=|W|$.  Let $H$ be the $w$-torsion subgroup of $T$;
thus, $H \cong (\mathbb{Z}/w\mathbb{Z})^n$.
The group we will use is $G := H \rtimes W$.

It suffices to show that
\[
\PicForget{X}{G} \subseteq \operatorname{Pic}^T(X,J) 
\]
as subgroups of $\operatorname{Pic}(X)^J$.
Let $\mathcal{L}$ be a $G$-linearizable line bundle.
We will prove that $\mathcal{L}$ is linearly equivalent to a divisor
from $\operatorname{TDiv}(X)^W$, which will establish that
$[\mathcal{L}]$ is in $\operatorname{Pic}^T(X,J)$ via
Lemma~\ref{lem:toric_coh_char}.

Let $G_{\mathcal{L}}$ be the lifting group for $G$.
Taking the preimage of $H$, we obtain
the lifting group $H_{\mathcal{L}}$ as a subgroup of $G_{\mathcal{L}}$.
We see that $G_{\mathcal{L}}/H_{\mathcal{L}} \cong G/H \cong W$.
Let $\widetilde{G}$ be the pullback of $G$ to
$\widetilde{\operatorname{Aut}}(X)$.
Since $\operatorname{Aut}(\Sigma)$ acts on the Cox ring,
the surjection $\widetilde{G} \to W$ has a splitting.
Therefore, by Lemma~\ref{lem:toric_lifting}, the surjection
$G_{\mathcal{L}} \to W$ has a splitting.
Thus $G_{\mathcal{L}} \cong H_{\mathcal{L}} \rtimes W$.

Let $T_{\mathcal{L}}$ be the lifting group of the torus $T$.
Technically, we have only defined lifting groups for finite groups,
but we only need it here to satisfy
a version of Lemma~\ref{lem:toric_lifting}.
Specifically, we can just set $T_{\mathcal{L}}$
to be the quotient of $\widetilde{T}$ by the kernel of
$\operatorname{ev}_{\mathcal{L}} : S \to \mathbb{C}^\times$.
We have
$(T \rtimes W)_{\mathcal{L}} \cong T_{\mathcal{L}} \rtimes W$
by similar reasoning as the previous paragraph.

We obtain a commutative diagram with exact rows:
\begin{equation} \label{eq:big_one}
\xymatrix{
1 \ar[r] &
S \ar[r] \ar@{->>}[d]^{\operatorname{ev}_{\mathcal{L}}} &
\widetilde{T} \rtimes W \ar[r] \ar@{->>}[d] &
T \rtimes W \ar[r] \ar@{=}[d] &
1 \\
1 \ar[r] &
\mathbb{C}^\times \ar[r] &
T_{\mathcal{L}} \rtimes W \ar[r] &
T \rtimes W \ar[r] &
1\\
1 \ar[r] &
\mathbb{C}^\times \ar[r] \ar@{=}[u] &
H_{\mathcal{L}} \rtimes W \ar[r] \ar@{^{(}->}[u] &
H \rtimes W \ar[r] \ar@{^{(}->}[u] &
1
}
\end{equation}
In the above, the morphisms $H \rtimes W \to T \rtimes W$
and $H_{\mathcal{L}} \rtimes W \to T_{\mathcal{L}} \rtimes W$
are injective.
The bottom row is the defining sequence for the lifting group
$G_{\mathcal{L}} \cong H_{\mathcal{L}} \rtimes W$.
Since $\mathcal{L}$ is $G$-linearizable, the
bottom row is a split exact sequence.

Let $P :=
\operatorname{Hom}(T_{\mathcal{L}},\mathbb{C}^\times)$
and $Q := \operatorname{Hom}(H_{\mathcal{L}},\mathbb{C}^\times)$
be the character groups, with the induced $W$-action.
Consider \eqref{eq:big_one} where we only take the normal
subgroup $N$ from each entry $N \rtimes W$, but remember that
all the morphisms are $W$-equivariant.
Next, we apply the character group functor
$\operatorname{Hom}(-,\mathbb{C}^\times)$
to the resulting diagram.
We obtain a commutative diagram of $W$-modules with exact rows:
\begin{equation} \label{eq:big_two}
\xymatrix{
0 \ar[r] &
M \ar[r] &
\operatorname{TDiv}(X) \ar[r] &
\operatorname{Pic}(X) \ar[r] &
0 \\
0 \ar[r] &
M \ar[r] \ar@{->>}[d]^q \ar@{=}[u] &
P \ar[r] \ar@{->>}[d] \ar@{^{(}->}[u] &
\mathbb{Z} \ar[r] \ar@{=}[d] \ar@{^{(}->}[u]^{[\mathcal{L}]} &
0 \\
0 \ar[r] &
M/wM \ar[r] &
Q \ar[r] &
\mathbb{Z} \ar[r] &
0
}
\end{equation}
Exactness on the right follows in the first two rows since
$\operatorname{Hom}(-,\mathbb{C}^\times)$ is
exact when restricted to tori.
The bottom row is exact on the right since the morphism $P \to Q$ is
surjective.
Moreover, the bottom row is a split exact sequence of $W$-modules
since the bottom row of \eqref{eq:big_one} is a split exact sequence.

Recall that the group $\operatorname{Ext}^1_{\mathbb{Z}W}(\mathbb{Z},M)$
of extensions of $W$-modules
is naturally isomorphic to $H^1(W,M)$.
Let $\eta \in H^1(W,M)$ denote the extension defining $P$
in \eqref{eq:big_two}
and let $\nu \in H^1(W,M/wM)$ denote the extension defining $Q$.
We have an exact sequence
\[
0 \to M \xrightarrow{-\times w} M
\xrightarrow{q} M/wM \to 0
\]
of $W$-modules where $\nu = q_\ast(\eta)$.
Recall from \cite[Corollary III.10.2]{Bro82Cohomology}
that multiplication by $w=|W|$ induces the zero
map on group cohomology $H^1(W,-)$.
Thus the induced long exact sequence
\[
\cdots \to H^1(W,M) \xrightarrow{-\times w} H^1(W,M)
\xrightarrow{q_\ast} H^1(W,M/wM) \to \cdots
\]
establishes that $q_\ast$ is injective.
Since the last row of \eqref{eq:big_two} splits,
we see that $\nu=0$ and, therefore $\eta=0$.
Thus the middle row of \eqref{eq:big_two} also splits.

Looking at the
top right square of \eqref{eq:big_two}, we use the splitting to produce
a $W$-equivariant composition
\[
\mathbb{Z} \to P \to \operatorname{TDiv}(X) \to \operatorname{Pic}(X)
\]
such that $1 \mapsto [\mathcal{L}]$.
Since $\mathbb{Z}$ has the trivial action, we conclude that
$\mathcal{L}$ is in the image of $\operatorname{TDiv}(X)^W$
as desired. 
\end{proof}

The groups $\operatorname{Am}^T(X,J)$
and $\operatorname{Am}^\chi(X,J)$
are both approximations for the usual Amitsur group
that exploit Proposition~\ref{prop:coh_subspace}
in similar ways.
However,
they are not the same in general (see Section \ref{sec:Hirzebruch} below).
The basic reason for this is that the invariants
$\dim H^0(X,D_i)$, $\chi(D_i)$ and $n_i$ are not equal in general.
However, we have the following.

\begin{proposition} \label{prop:AmTvsAmChi}
Let $X$ be a smooth Fano toric variety
with reductive automorphism group.
Then $\operatorname{Am}^T(X,J) \cong \operatorname{Am}^\chi(X,J)$
for every finite subgroup $J$ of $\PicIm(X)$.
\end{proposition}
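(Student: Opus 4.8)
The plan is to prove the stronger statement that $\operatorname{Pic}^T(X,J) = \operatorname{Pic}^\chi(X,J)$ as subgroups of $\operatorname{Pic}(X)^J$; since $\operatorname{Am}^T(X,J)$ and $\operatorname{Am}^\chi(X,J)$ are by definition the quotients of $\operatorname{Pic}(X)^J$ by these two subgroups, the identity map of $\operatorname{Pic}(X)^J$ then descends to the desired canonical isomorphism.

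The inclusion $\operatorname{Pic}^\chi(X,J) \subseteq \operatorname{Pic}^T(X,J)$ holds for any smooth projective toric variety and follows by combining results already established. By Proposition~\ref{prop:AmT_optimal} there is a finite subgroup $G_0 \subseteq \operatorname{Aut}(X)$ with $\PicIm(X,G_0) = J$, and comparing the proof of that proposition with (the proof of) Theorem~\ref{thm:toric} gives $\PicForget{X}{G_0} = \operatorname{Pic}^T(X,J)$. Applying (the proof of) Theorem~\ref{thm:can_surj} to $G_0$ shows every generator of $\operatorname{Pic}^\chi(X,J)$ is $G_0$-linearizable, i.e.\ $\operatorname{Pic}^\chi(X,J) \subseteq \PicForget{X}{G_0}$; combining yields the claim.

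For the reverse inclusion, where the Fano and reductivity hypotheses enter, note that $\operatorname{Pic}^T(X,J)$ is generated by the elements $n_i\,\orbitsum{J}{[D_i]}$, while $\chi(\mathcal{O}_X(D_i))\,\orbitsum{J}{[D_i]} \in \operatorname{Pic}^\chi(X,J)$ by taking $D = D_i$ in the definition; so it suffices to prove the numerical identity $\chi(\mathcal{O}_X(D_i)) = n_i$ for every $i$. I would establish this as $h^0(X,\mathcal{O}_X(D_i)) = n_i$ together with $H^p(X,\mathcal{O}_X(D_i)) = 0$ for $p \ge 1$. The vanishing is the cleaner half and uses Fano: from the conormal sequence $0 \to \mathcal{O}_X \to \mathcal{O}_X(D_i) \to \mathcal{N}_{D_i/X} \to 0$ and $H^p(X,\mathcal{O}_X) = 0$ for $p \ge 1$ (standard for complete toric varieties) one gets $H^p(X,\mathcal{O}_X(D_i)) \cong H^p(D_i,\mathcal{N}_{D_i/X})$, and adjunction identifies $\mathcal{N}_{D_i/X} \cong \omega_{D_i} \otimes \bigl(\mathcal{O}_X(-K_X)|_{D_i}\bigr)$, where $\mathcal{O}_X(-K_X)|_{D_i}$ is ample; hence these groups vanish by Kodaira vanishing on the smooth projective variety $D_i$. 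For the $h^0$ computation, $h^0(X,\mathcal{O}_X(D_i)) = \dim\operatorname{Cox}(X)_{[D_i]}$ is the number of monomials of Picard-degree $[D_i]$ in the Cox ring; the $n_i$ variables $x_j$ with $[\rho_j] = [D_i]$ always occur, and reductivity guarantees there are no others. Indeed, by \cite{Duncan16} reductivity of $\operatorname{Aut}(X)$ means the unipotent factor $U$ in $\widetilde{\operatorname{Aut}}(X) \cong U \rtimes (\prod_j \operatorname{GL}_{n_j}) \rtimes \PicIm(X)$ is trivial, equivalently (Demazure) that the fan $\Sigma$ has no unipotent roots, and a short bookkeeping argument then matches the monomials of degree $[D_i]$ with the variable $x_{\rho_i}$ together with the semisimple roots of $\Sigma$ based at $\rho_i$ (which account for the remaining $n_i-1$ ray-variables). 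This gives $h^0 = n_i$, hence $\chi(\mathcal{O}_X(D_i)) = n_i$, completing the proof.

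The step I expect to be the main obstacle is the identity $h^0(X,\mathcal{O}_X(D_i)) = n_i$: making the translation ``$\operatorname{Aut}(X)$ reductive $\Leftrightarrow$ no unipotent Demazure roots $\Leftrightarrow$ $\operatorname{Cox}(X)_{[D_i]}$ spanned by ray-variables'' precise requires care with the structure theory of toric automorphism groups. Both hypotheses are genuinely needed: without Fano one may have $H^1(X,\mathcal{O}_X(D_i)) \ne 0$, and without reductivity extra monomials appear in degree $[D_i]$, so that $n_i$, $h^0(X,\mathcal{O}_X(D_i))$ and $\chi(\mathcal{O}_X(D_i))$ all diverge --- precisely the phenomenon exhibited by the Hirzebruch surfaces of Section~\ref{sec:Hirzebruch}, where $\operatorname{Am}^T$ and $\operatorname{Am}^\chi$ differ.
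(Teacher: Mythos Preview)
Your proposal is correct and follows the same strategy as the paper: both establish $\chi(D_i)=n_i$ by combining higher-cohomology vanishing from the Fano hypothesis with the Demazure-root description of $\operatorname{Cox}(X)_{[D_i]}$ coming from reductivity (the paper cites \cite[Lemma~4.4]{Cox95The-homogeneous} for this), and both close the argument via Proposition~\ref{prop:AmT_optimal}. The only differences are cosmetic---the paper invokes Musta\c{t}\u{a}'s toric vanishing \cite[Theorem~9.3.7]{CoxLitSch11Toric} rather than your adjunction-plus-Kodaira route, and it runs the squeeze at the level of the quotients $\operatorname{Am}^T \twoheadrightarrow \operatorname{Am}^\chi \twoheadrightarrow \operatorname{Am}(X,G_0)\cong\operatorname{Am}^T$ rather than at the level of the subgroups $\operatorname{Pic}^T$ and $\operatorname{Pic}^\chi$.
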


\begin{proof}
Recall that the canonical bundle is
\[
K_X = -\sum_{i=1}^r D_{\rho_i}
\]
where
$D_{\rho_1}, \ldots, D_{\rho_r}$ are the divisors corresponding to
the rays of $X$.
Since $-K_X$ is ample, using a vanishing theorem of
M.~Musta\c{t}\u{a}~\cite[Theorem~9.3.7]{CoxLitSch11Toric} we see that
\[
H^p\left(X,\mathcal{O}_X\left(D_{\rho_j}\right)\right)=
H^p\left(X,\mathcal{O}_X\left(-K_X -\sum_{i\neq j} D_{\rho_i}\right)\right)
=0
\]
for every index $1 \le j \le r$ and every $p > 0$.
Therefore,
\[
\chi(D_{\rho_j})=\dim H^0(X,\mathcal{O}_X(D_{\rho_j}))
\] for every divisor $D_{\rho_j}$ coming from a ray.

We recall the notion of Demazure roots in
\cite[\S{4}]{Cox95The-homogeneous}.
The unipotent radical of $\operatorname{Aut}(X)$ is nontrivial
if and only if there exists a Demazure root that is \emph{not} semisimple.
Since $\operatorname{Aut}(X)$ is reductive, this means
all the Demazure roots are semisimple.
By \cite[Lemma~4.4]{Cox95The-homogeneous}, we conclude that
the only monomials in $\operatorname{Cox}(X)$ with degree equal to
that of the generators $x_1,\ldots,x_r$ are
the generators themselves.  Therefore,
\[
H^0(X,\mathcal{O}_X(D_i)) \cong \operatorname{Cox}(X)_{[D_i]}
\]
and we conclude that $\chi(D_i)=n_i$ for all $1 \le i \le s$.

Therefore, the defining generators of $\operatorname{Pic}^T(X,J)$
are contained in $\operatorname{Pic}^\chi(X,J)$
and we have a chain of surjections
\[
\operatorname{Am}^T(X,J) \to \operatorname{Am}^\chi(X,J)
\to \operatorname{Am}(X,G)
\]
for every finite group $G$ with $\PicIm(X,G)=J$.
By Proposition~\ref{prop:AmT_optimal}, there exists a choice of $G$
such that they are in fact isomorphisms.
\end{proof}

Various conditions for a smooth Fano toric variety to have a
reductive automorphism group can be found in \cite{Nill06}. 

\section{Examples}
\label{sec:examples}

\subsection{Products of projective spaces}
\label{sec:projective_products}

Let $n$ and $m$ be positive integers and consider $X=(\mathbb{P}^n)^m$.
Viewing $X$ as a toric variety, one finds
\[
M \cong \mathbb{Z}^{nm},\
\operatorname{TDiv}(X) \cong \mathbb{Z}^{(n+1)m},\
\textrm{and }
\operatorname{Pic}(X) \cong \mathbb{Z}^m.
\]
The basis of torus-invariant divisors are partitioned by linear
equivalence into $m$ sets of size $n+1$.  The full automorphism group is
\[
\operatorname{Aut}(X) \cong \operatorname{PGL}_{n+1}(\mathbb{C}) \rtimes S_m
\]
while the automorphisms of the fan are given by
\[
\operatorname{Aut}(\Sigma) = (S_{n+1})^m \rtimes
S_m.
\]
Thus, $\PicIm(X) = S_m$.  The automorphism groups are always reductive.

Suppose $J \subseteq S_m$.  Then $W=(S_{n+1})^m \rtimes J$ and we find
\[
\operatorname{TDiv}(X)^W = (\mathbb{Z}^{(n+1)m})^W
= \left(\left((\mathbb{Z}^{n+1})^{S_{n+1}}\right)^m\right)^J
= \left(\mathbb{Z}^m\right)^J .
\]
The map into $\operatorname{Pic}(X)^J$ is simply multiplication by $n+1$.
Thus, by Theorem~\ref{thm:toric}, we find:
\[
\operatorname{Am}^T(X,J) \cong
\operatorname{coker}
\left( \operatorname{TDiv}(X)^W \to \operatorname{Pic}(X)^J\right)
\cong \left((\mathbb{Z}/(n+1)\mathbb{Z})^m\right)^J .
\]

When $m=1$ we recover the fact that,
for any finite group $G$ acting on $\mathbb{P}^n$, the group
$\operatorname{Am}(\mathbb{P}^n,G)$ is a cyclic group of order dividing
$n+1$.

\subsection{Hirzebruch surfaces}
\label{sec:Hirzebruch}

Let $X$ be the Hirzebruch surface of degree $e$.
In other words, $X$ is a ruled surface over $\mathbb{P}^1$ with a
section of self-intersection $-e$.
Let $H$ be the class of the section and $F$ be the class of a fiber.
In this case, $X$ is a toric surface with maximal rays
\[
D_1=\begin{pmatrix} 1\\0 \end{pmatrix},\
D_2=\begin{pmatrix} 0\\1 \end{pmatrix},\
D_3=\begin{pmatrix} -1\\e \end{pmatrix},\
D_4=\begin{pmatrix} 0\\-1 \end{pmatrix}
\]
where $[D_1]=F$, $[D_2]=H$, $[D_3]=F$, and $[D_4]=H+eF$.
In this case, $\operatorname{Aut}(\Sigma) \cong S_2$ interchanges $D_1$ and
$D_3$.  Thus $\PicIm(X)$ is trivial in this case.
We compute that
\[
\operatorname{TDiv}(X)^W = \langle D_1+D_3, D_2, D_4 \rangle,
\]
which has image $\langle 2F, H, H+eF \rangle$ in $\operatorname{Pic}(X)$.
We conclude that, for $J=\PicIm(X)=1$, we have
\[
\operatorname{Am}^T(X,J)\cong
\begin{cases}
0 & \textrm{if $e$ is odd,}\\
\mathbb{Z}/2\mathbb{Z} & \textrm{if $e$ is even.}
\end{cases}
\]

Hirzebruch surfaces are not necessarily Fano and
their automorphism groups are typically not reductive.
Thus, Theorem~\ref{prop:AmTvsAmChi} does not apply
and the numerical Amitsur group may have a different
structure.  Indeed, we see this in the following.

\begin{proposition}
Let $J =\PicIm(X)=1$.
If $e$ is odd, then $\operatorname{Am}^\chi(X,J)$ is trivial.
If $e$ is even, then $\operatorname{Am}^\chi(X,J)\cong
(\mathbb{Z}/2)^2$.
\end{proposition}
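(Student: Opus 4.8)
The plan is to compute $\operatorname{Pic}^\chi(X,J)$ explicitly. Since $J=\PicIm(X)=1$ is trivial, every $J$-orbit on $\operatorname{Pic}(X)$ is a singleton, so $\operatorname{Pic}^\chi(X,1)=\langle \chi(D)\,D \mid D\in\operatorname{Pic}(X)\rangle$ and $\operatorname{Am}^\chi(X,1)=\operatorname{Pic}(X)/\operatorname{Pic}^\chi(X,1)$, where $\operatorname{Pic}(X)=\mathbb{Z}H\oplus\mathbb{Z}F$. The first step is to write $\chi$ as a numerical polynomial on this lattice. Using Riemann--Roch for surfaces, $p_a(X)=0$, the intersection numbers $H^2=-e$, $H\cdot F=1$, $F^2=0$, and the canonical class $K_X=-2H-(e+2)F$ (read off from the ray data $[D_1]=[D_3]=F$, $[D_2]=H$, $[D_4]=H+eF$), one obtains
\[
\chi(aH+bF)=1+a+b+ab-e\binom{a+1}{2}.
\]

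For $e$ even I would show $\operatorname{Pic}^\chi(X,1)=2\operatorname{Pic}(X)$, which gives $\operatorname{Am}^\chi(X,1)\cong(\mathbb{Z}/2)^2$. The inclusion $\operatorname{Pic}^\chi(X,1)\subseteq 2\operatorname{Pic}(X)$ is a parity argument: when $e$ is even the term $e\binom{a+1}{2}$ is even, so $\chi(aH+bF)\equiv 1+a+b+ab\pmod 2$, and a four-case check on the parities of $a$ and $b$ shows $\chi(aH+bF)$ is odd exactly when both $a$ and $b$ are even; hence $\chi(D)\,D$ always lies in $2\operatorname{Pic}(X)$. For the reverse inclusion one exhibits generators: $2F=\chi(F)\,F\in\operatorname{Pic}^\chi(X,1)$, while $\chi(H+F)(H+F)-\chi(H)\,H=2H+(4-e)F$, so subtracting $\tfrac{4-e}{2}\cdot 2F$ yields $2H\in\operatorname{Pic}^\chi(X,1)$. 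Thus $\operatorname{Pic}^\chi(X,1)=\langle 2H,2F\rangle=2\operatorname{Pic}(X)$.

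For $e$ odd the target is $\operatorname{Pic}^\chi(X,1)=\operatorname{Pic}(X)$, i.e.\ $\operatorname{Am}^\chi(X,1)=0$. Again $2F=\chi(F)\,F$ and $2H+(4-e)F=\chi(H+F)(H+F)-\chi(H)\,H$ lie in $\operatorname{Pic}^\chi(X,1)$; since $4-e$ is now odd, subtracting $\tfrac{3-e}{2}\cdot 2F$ gives $2H+F\in\operatorname{Pic}^\chi(X,1)$. Then $4H=2(2H+F)-2F$ and $(2-e)H=\chi(H)\,H$ both lie in $\operatorname{Pic}^\chi(X,1)$, and as $2-e$ is odd we have $\gcd(4,2-e)=1$, forcing $H\in\operatorname{Pic}^\chi(X,1)$ and then $F=(2H+F)-2H\in\operatorname{Pic}^\chi(X,1)$; hence $\operatorname{Pic}^\chi(X,1)=\operatorname{Pic}(X)$.

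The only delicate point is producing $2H$ (equivalently $H$) inside $\operatorname{Pic}^\chi(X,1)$ robustly: the naive generator $\chi(H)\,H=(2-e)H$ degenerates when $e=2$, so the ray classes alone do not suffice, and one genuinely needs the combination $\chi(H+F)(H+F)-\chi(H)\,H$, whose $H$-coefficient is $2$ regardless of $e$ (more generally, differencing the generators along the line $\{H+bF\mid b\in\mathbb{Z}\}$ always produces $2H+(4b+4-e)F$). Everything else is the bookkeeping above together with the Riemann--Roch computation; no finiteness results are needed, since both inclusions are verified directly.
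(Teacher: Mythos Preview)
Your proof is correct and follows essentially the same approach as the paper: both compute the Riemann--Roch formula $\chi(aH+bF)=1+a+b+ab-e\binom{a+1}{2}$, evaluate $\chi(D)D$ at a handful of small divisors (the paper uses $H,F,H+F,H-F$; you use $H,F,H+F$) to produce generators, and for even $e$ run the same parity check showing $\chi(D)D\in 2\operatorname{Pic}(X)$ always. Your parity argument via $\chi\equiv(1+a)(1+b)\pmod 2$ and your explicit $\gcd(4,2-e)=1$ step for odd $e$ are slightly cleaner than the paper's case splits, but the underlying strategy is identical.
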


\begin{proof}

The class of the section $H$ and fiber $F$ form a basis for the Picard
group with intersection theory $F^2 = 0$, $H^2=-e$, and $F\cdot H=1$.
We have $K_X = -2H-(2+e)F$, so if $D=aH+bF$, then
\[
\chi(D) = -e\binom{a}{2}+ab+(1-e)a+b+1 .
\]

We compute $\chi(D)D$ for $D \in \{H,F,H+F,H-F\}$ and determine that
\[
\begin{pmatrix} 2-e \\ 0 \end{pmatrix},\
\begin{pmatrix} 0 \\ 2 \end{pmatrix},\
\begin{pmatrix} 4-e \\ 4-e \end{pmatrix},\
\begin{pmatrix} -e \\ e \end{pmatrix}
\]
are all in $\operatorname{Pic}^\chi(X,J)$.
If $e$ is odd, then these generate all of $\operatorname{Pic}(X)$
and therefore $\operatorname{Am}^\chi(X,J)$ is trivial.

If $e$ is even, then we conclude that $\langle 2H, 2F \rangle$
is a subgroup of $\operatorname{Pic}^\chi(X,J)$.
We will show that, in fact, equality holds.
Let $D=aH+bF$ and consider $E:=\chi(D)D=cH+dF$.
If $a$ is odd, then $a=2k+1$ and
\[
\chi(D) \equiv e(k+1) \equiv 0 \mod{2},
\]
so $E$ has even coefficients.
If $a$ is even, then
\[
\chi(D) \equiv -b+1 \mod{2},
\]
which is even when $b$ is odd.
Thus, if $e$ is even, then $c,d$ are even for all choices of $D$.
Thus, $\operatorname{Pic}^\chi(X,J)$ never contains $H$, $H+F$ or $F$.
\end{proof}

\subsection{The del Pezzo surface of degree 6}
\label{sec:dP6}

Here we determine the Amitsur groups of a del Pezzo surface $X$ of degree $6$.
We recall that this is a toric variety obtained by blowing up
$\mathbb{P}^2$ in three non-collinear points $p_1,p_2,p_3$.
Let $E_1,E_2,E_3$ denote the exceptional divisors of the blown up points
and let $L_i$ represent the strict transform of the line passing through
the points $p_j,p_k$ where $i,j,k \in \{1,2,3\}$ are distinct.
The group $\operatorname{TDiv(X)}$ is the free abelian group
with basis $\{E_1,E_2,E_3,L_1,L_2,L_3\}$.
All 6 divisors are in separate linear equivalence classes.

In this case, the fan $\Sigma$ in $\mathbb{R}^2$ is the unique complete
fan with rays
\[
\begin{pmatrix} 1\\0 \end{pmatrix},\
\begin{pmatrix} 0\\1 \end{pmatrix},\
\begin{pmatrix} -1\\-1 \end{pmatrix},\
\begin{pmatrix} -1\\0 \end{pmatrix},\
\begin{pmatrix} 0\\-1 \end{pmatrix},\
\begin{pmatrix} 1\\1 \end{pmatrix},
\]
corresponding to the above basis for $\operatorname{TDiv(X)}$.
The cocharacter lattice $N$ is isomorphic to $\mathbb{Z}^2$
and the matrices
\[
s = \begin{pmatrix} 0 & 1\\ 1 & 0 \end{pmatrix}
\textrm{ and }
r = \begin{pmatrix} 0 & 1\\ -1 & 1 \end{pmatrix}
\]
preserve the fan $\Sigma$.
The fan has automorphism group
$\operatorname{Aut}(\Sigma) = \langle s,r \rangle$
isomorphic to the dihedral group $D_{12}$
of order $12$, which can be thought of as the symmetries of the hexagon
of exceptional lines.
The full automorphism group is
\[
\operatorname{Aut}(X) \cong (\mathbb{C}^\times)^2 \rtimes D_{12},
\]
which coincides with the normalizer of the maximal torus in this case.
In particular, $\PicIm(X) \cong
\operatorname{Aut}(\Sigma)$ and there is no distinction between $J$ and
$W$ here.

Consider the exact sequence
\[
0 \to M \to \operatorname{TDiv}(X) \to \operatorname{Pic}(X) \to 0
\]
where $M$ is the character lattice dual to $N$.
The Picard group has rank $4$ and has basis $\{[H],[E_1],[E_2],[E_3]\}$
where $H$ is the strict transform of a general line on $\mathbb{P}^2$.
We see that $L_i = H - E_j - E_k$ whenever $i,j,k$ are distinct.

By applying Theorem~\ref{thm:toric},
the numerical Amitsur groups of $X$ can be computed.
They are listed in Table~\ref{tab:dP6}. 

\begin{table}[ht]
\centering
\begin{tabular}{|l|l|l|}
\hline
$J$ & Structure & $\operatorname{Am}^\chi(X,J)$\\
\hline
$1$ & $1$ & 0 \\
$\langle s \rangle$ & $C_2$ & 0 \\
$\langle sr^3 \rangle$ & $C_2$ & 0 \\
$\langle r^3 \rangle$ & $C_2$ & $(\mathbb{Z}/2\mathbb{Z})^2$ \\
$\langle r^2 \rangle$ & $C_3$ & $\mathbb{Z}/3\mathbb{Z}$ \\
$\langle s, r^3 \rangle$ & $C_2^2$ & $\mathbb{Z}/2\mathbb{Z}$ \\
$\langle s, r^2 \rangle$ & $S_3$ & 0 \\
$\langle sr^3, r^2 \rangle$ & $S_3$ & $\mathbb{Z}/3\mathbb{Z}$ \\
$\langle r \rangle$ & $C_6$ & 0 \\
$\langle s,r \rangle$ & $D_{12}$ & 0 \\
\hline
\end{tabular}
\caption{Numerical Amitsur groups of $dP_6$}
\label{tab:dP6}
\end{table}

\begin{remark} \label{rem:MackeyFunctor}
Observe that both $\operatorname{Am}^\chi(X,D_{12})$
and $\operatorname{Am}^\chi(X,1)$ are trivial, but many of the
intermediate subgroups have nontrivial Amitsur groups.
This demonstrates that the assumption
that $\PicIm(X,G)=\PicIm(X,H)$ cannot be removed
in Proposition~\ref{prop:PicIm}.
\end{remark}

\begin{remark}
In the cases where
$\operatorname{Am}^\chi(X,J) \cong \mathbb{Z}/3\mathbb{Z}$,
the exceptional divisors $E_1,E_2,E_3$ can be equivariantly blown down
to produce $\mathbb{P}^2$.
In the other cases where
$\operatorname{Am}^\chi(X,J)$ is non-trivial,
a pair of exceptional divisor $E_i,L_i$ can be equivariantly blown down
to produce $\mathbb{P}^1 \times \mathbb{P}^1$.
This indicates how one can find groups $G$ that realize non-trivial
values for the ordinary Amitsur group $\operatorname{Am}(X,G)$ in these
cases.
Applying the contrapositive,
we recover the fact that the Amitsur group is always trivial for a
\emph{$G$-minimal} del Pezzo $G$-surface
of degree 6
(see \cite[Proposition~A.7]{Blanc.Cheltsov.23}).
\end{remark}

\begin{remark}
It is tempting to try to compute the numerical Amitsur
group via the quotient of the group
\[
P:= \langle \chi(\mathcal{L})\mathcal{L} \mid \mathcal{L} \in
\operatorname{Pic}(X)^G \rangle \subseteq \operatorname{Pic}(X)^G
\]
instead of the definition of $\operatorname{Pic}^\chi(X,G)$
involving $G$-orbits from \eqref{eq:numPic_def}.
However, they do not give the same result.

Every line bundle in $P$ is contained in $\operatorname{Pic}^\chi(X,G)$,
but not conversely.  Indeed, consider a group $G$ such that
\[
J=W = \langle sr^3 \rangle = \left\langle
\begin{pmatrix} 0 & -1\\ -1 & 0 \end{pmatrix} \right\rangle \cong C_2.
\]
We see that
\[
\operatorname{TDiv}(X)^W = \langle
E_1+L_2,\ E_2+L_1,\ E_3+L_3
\rangle \cong \mathbb{Z}^3,
\]
which maps onto
\[
\operatorname{Pic}(X)^J=\langle A:=H-E_3,\ B:=H-E_1-E_2-E_3\rangle
\cong \mathbb{Z}^2.
\]
Using Riemann-Roch and intersection theory, or using toric methods,
we determine that
\[
\chi(mA+nB)=2mn-n^2+m+n+1.
\]
Thus, for $L=mA+nB$, we see that $\chi(L)[L]$ is given by
\[
\left(2m^2n-n^2m+m^2+mn+m\right)A
+\left(2mn^2-n^3+mn+n^2+n\right)B .
\]
Observe that the coefficient of $A$ can be rewritten as
\[
2m^2n+m(n-n^2)+(m^2+m),
\]
which is even for all values of $m,n$.
Therefore $A$ is not in the span of 
$\chi(L)L$ for $L \in \operatorname{Pic}(X)^G$.
However, $A = E_1 + sr^3(E_1)$ and $\chi(E_1)=-1$.
Thus $A \in \operatorname{Pic}^\chi(X,G)$.
\end{remark}


\bibliographystyle{alpha}

\end{document}